\documentclass[11pt,letter]{article}

\tolerance2000
\bibliographystyle{plainurl}
\everymath{\color{MidnightBlack}}

\usepackage{titling}
\thanksmarkseries{arabic}
\newcommand*\samethanks[1][\value{footnote}]{\footnotemark[#1]}

\RequirePackage[T1]{fontenc}
\RequirePackage[utf8]{inputenc}
\usepackage[greek,russian,english]{babel}
\usepackage{alphabeta}

\usepackage[pdftex,
backref=true,
plainpages = false,
pdfpagelabels,
hyperfootnotes=true,
pdfpagemode=FullScreen,
bookmarks=true,
bookmarksopen = true,
bookmarksnumbered = true,
breaklinks = true,
hyperfigures,
pagebackref,
urlcolor = magenta,
urlcolor = MidnightBlack,
anchorcolor = green,
hyperindex = true,
colorlinks = true,
linkcolor = black!30!blue,
citecolor = black!30!green
]{hyperref}

\usepackage[svgnames]{xcolor}
\usepackage{bm,fullpage,aliascnt,amsthm,amsmath,amsfonts,rotate,datetime,amsmath,ifthen,eurosym,wrapfig,cite,url,subcaption,cite,amsfonts,amssymb,ifthen,color,wrapfig,rotate,lmodern,aliascnt,datetime,graphicx,algorithmic,algorithm,enumerate,enumitem,todonotes,fancybox,cleveref,bm,subcaption,tabularx,colortbl,xspace,graphicx,algorithmic,algorithm}
\usepackage{stmaryrd,lmodern}

\newcommand{\greenfbox}[1]{\green{\fbox{\black{#1}}}}
\newcommand{\tw}{{\sf tw}}
\newcommand{\sbn}{{\sf sbn}}
\newcommand{\ltp}{{\sf ltp}}

\newcommand{\hh}{\end{document}}

\newcommand{\remove}[1]{}

\setlength\marginparwidth{2cm}

\newcommand{\NP}{{\sf NP}\xspace}

\newcommand{\obs}{{\sf Obs}}
\newcommand{\exc}{{\sf exc}}

\RequirePackage{stmaryrd}
\usepackage{textcomp}
\DeclareUnicodeCharacter{2286}{\subseteq}
\DeclareUnicodeCharacter{2192}{\ifmmode\to\else\textrightarrow\fi}
\DeclareUnicodeCharacter{2203}{\ensuremath\exists}
\DeclareUnicodeCharacter{183}{\cdot}
\DeclareUnicodeCharacter{2200}{\forall}
\DeclareUnicodeCharacter{2264}{\leq}
\DeclareUnicodeCharacter{2265}{\geq}
\DeclareUnicodeCharacter{8614}{\mathbin{\to }}
\DeclareUnicodeCharacter{8656}{\Leftarrow}
\DeclareUnicodeCharacter{8657}{\Uparrow}
\DeclareUnicodeCharacter{8658}{\Rightarrow}
\DeclareUnicodeCharacter{8659}{\Downarrow}
\DeclareUnicodeCharacter{8669}{\rightsquigarrow}
\newcommand{\eqdef}{\stackrel{{\scriptsize\rm def}}{=}}
\DeclareUnicodeCharacter{8797}{\eqdef}
\DeclareUnicodeCharacter{8870}{\vdash}
\DeclareUnicodeCharacter{8873}{\Vdash}
\DeclareUnicodeCharacter{22A7}{\models}
\DeclareUnicodeCharacter{9121}{\lceil}
\DeclareUnicodeCharacter{9123}{\lfloor}
\DeclareUnicodeCharacter{9124}{\rceil}
\DeclareUnicodeCharacter{2208}{\in}
\DeclareUnicodeCharacter{9126}{\rfloor}
\DeclareUnicodeCharacter{9655}{\triangleright}
\DeclareUnicodeCharacter{9665}{\triangleleft}
\DeclareUnicodeCharacter{9671}{\diamond}
\DeclareUnicodeCharacter{9675}{\circ}
\DeclareUnicodeCharacter{10178}{\bot}
\DeclareUnicodeCharacter{10214}{} 
\DeclareUnicodeCharacter{10215}{} 
\DeclareUnicodeCharacter{10229}{\longleftarrow}
\DeclareUnicodeCharacter{10230}{\longrightarrow}
\DeclareUnicodeCharacter{10231}{\longleftrightarrow}
\DeclareUnicodeCharacter{10232}{\Longleftarrow}
\DeclareUnicodeCharacter{10233}{\Longrightarrow}
\DeclareUnicodeCharacter{10234}{\Longleftrightarrow}
\DeclareUnicodeCharacter{10236}{\longmapsto}
\DeclareUnicodeCharacter{10238}{\Longmapsto} 
\DeclareUnicodeCharacter{10503}{\to }    
\DeclareUnicodeCharacter{10971}{\mathrel{\not\hspace{-0.2em}\cap}}
\DeclareUnicodeCharacter{65294}{\ldotp}
\DeclareUnicodeCharacter{65372}{\mid}

\usepackage{color}
\definecolor{MidnightBlack}{rgb}{0.1,0.1,.34}
\definecolor{MidnightBlue}{rgb}{0.1,0.1,0.43}
\definecolor{Black}{rgb}{0,0, 0}
\definecolor{Blue}{rgb}{0, 0 ,1}
\definecolor{Red}{rgb}{1, 0 ,0}
\definecolor{White}{rgb}{1, 1, 1}
\definecolor{grey}{rgb}{.6, .6, .6}
\definecolor{Mygreen}{rgb}{.0, .7, .0}
\definecolor{Yellow}{rgb}{.55,.55,0}
\definecolor{Mustard}{rgb}{1.0, 0.86, 0.35}
\definecolor{applegreen}{rgb}{0.55, 0.71, 0.0}
\definecolor{darkturquoise}{rgb}{0.0, 0.81, 0.82}
\definecolor{celestialblue}{rgb}{0.29, 0.59, 0.82}
\definecolor{green_yellow}{rgb}{0.68, 1.0, 0.18}
\definecolor{crimsonglory}{rgb}{0.75, 0.0, 0.2}
\definecolor{darkmagenta}{rgb}{0.30, 0.0, 0.30}
\definecolor{internationalorange}{rgb}{1.0, 0.31, 0.0}
\definecolor{darkorange}{rgb}{1.0, 0.55, 0.0}
\definecolor{ao}{rgb}{0.0, 0.5, 0.0}
\definecolor{awesome}{rgb}{1.0, 0.13, 0.32}
\definecolor{darkcyan}{rgb}{0.0, 0.50, 0.50}
\definecolor{violet}{rgb}{0.93, 0.51, 0.93}
\definecolor{brown}{rgb}{0.65, 0.16, 0.16}
\definecolor{orange}{rgb}{1.0, 0.65, 0.0}

\newcommand{\black}[1]{{\color{Black}#1}}

\newcommand{\blue}[1]{{\color{Blue}#1}}
\newcommand{\red}[1]{{\color{Red}#1}}
\newcommand{\green}[1]{{\color{Mygreen}#1}}

\newcommand{\violet}[1]{{\color{violet}#1}}

\usepackage{tikz}
\usetikzlibrary{ipe}
\usetikzlibrary{patterns}
\usetikzlibrary{calc,3d}
\usetikzlibrary{intersections,decorations.pathmorphing,shapes,decorations.pathreplacing,fit}
\usetikzlibrary{shapes.geometric,hobby,calc}
\usetikzlibrary{decorations}
\usetikzlibrary{decorations.pathmorphing}
\usetikzlibrary{decorations.text}
\usetikzlibrary{shapes.misc}
\usetikzlibrary{decorations,shapes,snakes}

\newcounter{func}

\newcommand{\funref}[1]{\hyperref[#1]{f_{\ref*{#1}}}} 
\newcounter{con}

\newcommand{\conref}[1]{\hyperref[#1]{c_{\ref*{#1}}}} 


\newcommand{\mynewtheorem}[2]{
	\newaliascnt{#1}{dummy}
	\newtheorem{#1}[#1]{#2}
	\aliascntresetthe{#1}
}

\theoremstyle{plain}
\mynewtheorem{theorem}{Theorem}
\mynewtheorem{@menospreciando}{Proposition}
\mynewtheorem{corollary}{Corollary}
\mynewtheorem{lemma}{Lemma}
\theoremstyle{definition}
\mynewtheorem{definition}{Definition}
\theoremstyle{remark}
\mynewtheorem{sublemma}{Sublemma}
\mynewtheorem{claim}{Claim}
\mynewtheorem{remark}{Remark}
\mynewtheorem{fact}{Fact}
\mynewtheorem{conjecture}{Conjecture}
\mynewtheorem{question}{Question}
\mynewtheorem{observation}{Observation}
\mynewtheorem{problem}{Problem}
\mynewtheorem{note}{Note}

\addto\extrasenglish{

}



\makeatletter
\newtheoremstyle{caja1}
  {\topsep}
  {\topsep}
  {\itshape}
  {}
  {}
  {}
  {.5em}
  {\blue{\fbox{\black{\thmname{#1}~\thmnumber{#2}\@ifempty{#3}{.}{}\thmnote{ (#3).}}}}}
\makeatother
\theoremstyle{caja1}

\makeatletter
\newtheoremstyle{caja2}
  {\topsep}
  {\topsep}
  {\itshape}
  {}
  {}
  {}
  {.5em}
  {\green{\fbox{\black{\thmname{#1}~\thmnumber{#2}\@ifempty{#3}{.}{}\thmnote{ (#3).}}}}}
\makeatother
\theoremstyle{caja2}

\makeatletter
\newtheoremstyle{caja3}
  {\topsep}
  {\topsep}
  {\itshape}
  {}
  {}
  {}
  {.5em}
  {\red{\fbox{\black{\thmname{#1}~\thmnumber{#2}\@ifempty{#3}{.}{}\thmnote{ (#3).}}}}}
\makeatother
\theoremstyle{caja3}

\providecommand{\customgenericname}{}

\tikzstyle{every node}=[circle, draw, fill=black,
                        inner sep=0pt, minimum width=4pt]
                        
\usepackage[font=small,labelfont=bf]{caption}

\begin{document}

\title{On Strict Brambles\thanks{The second and the third author were supported  by   the ANR projects DEMOGRAPH (ANR-16-CE40-0028), ESIGMA (ANR-17-CE23-0010), and the French-German Collaboration ANR/DFG Project UTMA (ANR-20-CE92-0027).}\medskip\medskip\medskip}

\author{ 
Emmanouil Lardas%
\thanks{Department of Mathematics, National and Kapodistrian University of Athens, Athens, Greece.}
\and 
Evangelos Protopapas%
\thanks{LIRMM, Univ Montpellier, CNRS, Montpellier, France.} 
\and
Dimitrios  M. Thilikos\samethanks[3]
\and\\
Dimitris Zoros\samethanks[2]
}
\date{}

\maketitle

\begin{abstract}

\noindent A {\em strict bramble} of a graph $G$ is a collection of pairwise-intersecting connected subgraphs of $G.$ The {\em order} of a strict bramble ${\cal B}$ is the minimum size of a set of vertices intersecting all sets of ${\cal B}.$  The {\em strict bramble number} of $G,$ denoted by ${\sf sbn}(G),$ is the maximum order of a strict bramble in $G.$ The strict bramble number of $G$ can be seen as a way to extend the notion of acyclicity, departing from the fact  that  (non-empty) acyclic graphs are exactly the graphs where every strict bramble has order one.  We initiate the study of this graph parameter by providing three alternative definitions, each revealing different structural characteristics.   The first is a min-max theorem asserting that ${\sf sbn}(G)$ is equal to the minimum $k$ for which $G$ is a minor of the lexicographic product of a tree and a clique on $k$ vertices (also known as the {\em lexicographic tree product number}). The second characterization is in terms of a new variant of a tree decomposition called {\em lenient tree decomposition}. We prove that ${\sf sbn}(G)$ is equal to the minimum $k$ for which there exists a lenient tree decomposition of $G$ of width at most $k.$ The third characterization is in terms of extremal graphs. For this, we define, for each $k,$ the concept of a {\em $k$-domino-tree} and we prove that every edge\mbox{-}maximal graph of strict bramble number at most $k$ is a $k$-domino-tree. We also identify three graphs that constitute the minor-obstruction set of the class of graphs with strict bramble number at most two. We complete our results by proving that, given some $G$ and $k,$  deciding whether ${\sf sbn}(G) \leq k$ is an {\sf NP}-complete problem. \end{abstract}
\medskip\medskip\medskip

\noindent{\bf Keywords:} Strict bramble, Bramble, Treewidth, Lexicographic tree product number, Obstruction set, Tree decomposition, Lenient tree decomposition.
%
%
%
%
%
\newpage
\section{Introduction}

A well-known definition of acyclicity is the following: a non-empty graph $G$ is acyclic if for every collection of pairwise intersecting subtrees of $G$ there is some vertex appearing in every subtree. In this paper we deal with a natural parametric extension of acyclicity, that is, the minimum $k$ such that for every collection of pairwise intersecting subtrees of $G$ there is a set of $k$ vertices intersecting all of them. To our knowledge, this graph parameter\footnote{We use the term {\em graph parameter} for every function mapping  graphs to non-negative integers.} appeared for the first time by Kozawa, Otachi and Yamazaki in~\cite{KozawaOY14lower} with the name {\sl PI number} (where {PI} stands for ‘‘Pairwise Intersecting’’) and was used in order to derive lower bounds for the treewidth of several classes of product graphs (for the definition of treewidth, see~\autoref{@imprecaciones}). The same parameter was recently introduced by Aidun, Dean, Morrison, Yu, and  Yuan in~\cite{AidunDMYY19treew} with the name {\sl strict bramble number} and is the term that we adopt in this paper. The strict bramble number was used in~\cite{AidunDMYY19treew} in order to study the relation of treewidth and the gonality on particular classes of graphs.
 
 \paragraph{Strict brambles.}
We proceed with the formal definition of the {\em strict bramble number}. Given a collection ${\cal B}$ of vertex sets of $G$  and some vertex set $X,$ we say that $X$ {\em covers} ${\cal B}$ if every set in ${\cal B}$ has some vertex in common with $X.$   We say that a vertex set $S$ is {\em connected} if the subgraph of $G$ induced by $S$ is connected. 
A {\em strict bramble} of a graph $G$ is a  collection ${\cal B}$ of vertex sets of $G$  such that:
\begin{enumerate}[label=(\arabic*)]
\item every set in ${\cal B}$ is connected;
\item\label{@konstruieren} every two sets in ${\cal B}$ have some vertex in common. 
\end{enumerate}
The {\em order} of a strict bramble ${\cal B}$ of $G$ is the minimum size of a set that covers ${\cal B}$ and is denoted by ${\sf order}({\cal B}).$  The {\em strict bramble number} of $G,$ denoted by ${\sf sbn}(G),$ is the maximum order of a strict bramble of $G.$

\paragraph{Brambles.}
Given two vertex sets $S$ and $S'$ of a graph $G$ we say that $S$ and $S'$ {\em touch} in $G$ if either they have some vertex in common or there is an edge with one endpoint in $S$ and the other in $S'.$ If we relax the definition of strict bramble by substituting \ref{@konstruieren} with:
\begin{enumerate}
\item[($2'$)]\label{@discreditable} every two sets in ${\cal B}$ are touching,
\end{enumerate}
then we obtain the (classic) notion of {\em bramble} and the parameter {\em bramble number}, denoted by ${\sf bn}(G),$ introduced by Seymour and Thomas in~\cite{SeymourT93graph}\footnote{We wish to stress that in~\cite{SeymourT93graph}  the term ``{\sl screen}'' was used, instead of the term ``bramble''.}. The study of brambles attracted a lot of attention because of the main result in~\cite{SeymourT93graph}, that is a min-max theorem asserting that for every graph $G,$ the treewidth of $G$ is one less than its bramble number. As already observed in~\cite{KozawaOY14lower} (using the results of \cite{Reed97anewc}), for every graph $G,$ it holds that ${\sf sbn}(G) \leq {\sf bn}(G) \leq 2 \cdot {\sf sbn}(G)$ which, in turn, implies that:
\begin{eqnarray}
\sbn(G)-1 \leq \tw(G)  \leq  2\cdot \sbn(G)-1.\label{@marvellously}
\end{eqnarray}
{\sl Treewidth}, the min-max analogue of brambles, is one of the most important graph parameters. It was introduced by Robertson and Seymour in~\cite{RobertsonS84GMIII} (see~\cite{BerteleB72nonser,Halin76sfun} for earlier appearances). Treewidth served as a cornerstone parameter of the Graph Minors series of Robertson and Seymour and is omnipresent in a wide range of topics in combinatorics and in graph algorithms~\cite{Bodlaender98apart}.

In this paper we initiate the study of the strict bramble number, mainly motivated by the fact that, so far, no min-max analogue, parallel to treewidth, is known for this graph parameter. In this direction, we provide three alternative definitions of the strict bramble number, each revealing different characteristics of this parameter. We start with a brief introduction of these definitions.

\paragraph{Lexicographic tree product.} Let $G, H$ be a pair of graphs. The {\em lexicographic product} of $G$ and $H,$ denoted by $G \cdot H,$ is the graph whose vertex set is the Cartesian product of the vertex sets of $G$ and $H$ and where the vertex $(u, v)$ is adjacent with the vertex $(w, z)$ in $G \cdot H$ if and only if either $u$ is adjacent with $w$ in $G$ or it holds that $u = w$ and $v$ is adjacent with $z$ in $H.$ The {\em lexicographic tree product number} of $G$ is defined by Harvey and  Wood in~\cite{HarveyW17param} as:
\begin{eqnarray*}
\ltp(G)  & = & \min\{ k \in \mathbb{N} \mid \textrm{there is a tree $T$ such that $G$ is a minor of $T \cdot K_{k}$}\}.\label{@intelligiblement}
\end{eqnarray*}
(For the definition of the minor relation, see \autoref{@desasosiegos}). Our first contribution is to show that the lexicographic tree product number and the strict bramble number are the same parameter. Incidentally, replacing {\sf sbn} with {\sf ltp} in \eqref{@marvellously}, was already proved in~\cite{HarveyW17param}.

\paragraph{Lenient tree decompositions.}
Let $G$ be a graph, $T$ a tree and let $χ$ be a function mapping vertices of $T$ to vertex sets of $G.$ We say that two vertices $t,t'$ of $T$  are {\em close} in $T$ if either they
are identical or they are adjacent.
The pair $(T, χ)$ is a \textit{lenient tree decomposition} of $G$ if it satisfies the following three conditions:
\begin{enumerate}[label=(C\arabic*)]
    \item\label{@predominantly} $\bigcup_{t \in V(T)} χ(t)$ is the vertex set of $G$;
    \item\label{@calamitously} for every edge $e$ of $G,$ there are two close vertices $t,t'$ of $T$ such that, $e \subseteq χ(t) \cup χ(t^{\prime})$;
    \item\label{@unpretentious} for every vertex $x$ of $G,$ the set $\{ t \mid x \in χ(t)\}$ is connected in $T.$
\end{enumerate}

We define the {\em width} of $(T, χ),$ as the maximum size of a set $χ(t),$ for vertices $t$ of $T.$ See \autoref{lenient_dec} for an example of the above definition. Our second characterization of the strict bramble number is that, for every graph $G,$ $\sbn(G)$ is equal to the minimum width of a lenient tree decomposition of $G.$ In that way, lenient tree decompositions can serve as the  analogue of tree decompositions for the case of strict brambles. Notice that the definition of a tree decomposition, given in~\autoref{@desasosiegos}, follows from the above definition if we substitute ``close'' by ``identical''.

\begin{figure}[t]
\begin{center}
\graphicspath{{./Figures/}}
\scalebox{1.2}{\includegraphics{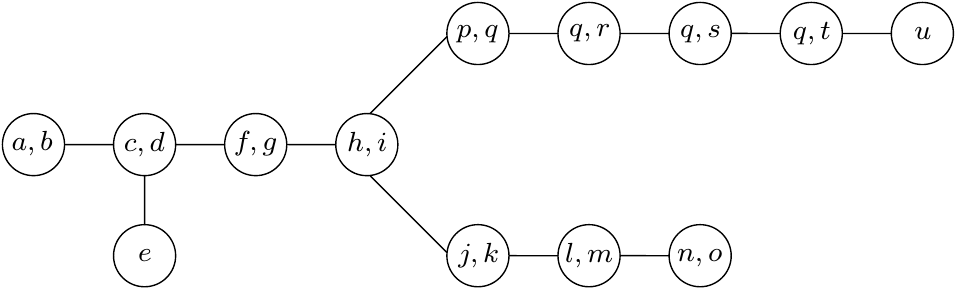}}
\end{center}
\caption{A lenient tree decomposition of the graph $G'$ of \autoref{domino_expl}.}
\label{lenient_dec}
\end{figure}

\paragraph{$k$-domino-trees.} Given a non-negative integer $k,$ a {\em $k$-tree} is recursively defined as follows: a graph $G$ is a {\em $k$-tree} if it is either isomorphic to $K_{r},$ for some $r\leq k,$ or it 
contains a vertex $v$ of degree $k$ in $G$ whose neighborhood induces a clique in $G$ and whose removal from $G$ yields a $k$-tree. It is known that among all the graphs with treewidth at most $k,$ those that are 
edge-maximal (that is, after the addition of any edge they obtain treewidth more than $k$) are precisely the $k$-trees. 
This implies that the treewidth of a graph can be defined as the minimum $k$ for which $G$ is a spanning subgraph of a $k$-tree. Is there an analogous definition for the strict bramble number? What are the edge-extremal graphs of strict bramble number at most $k$?

Our third characterization is obtained by answering the above  questions. For this, in \autoref{@monosyllabic}, we introduce the concept of a $k$-domino-tree. We prove that $\sbn(G)$ is equal to the minimum $k$ for which $G$ is a spanning subgraph of a $k$-domino-tree.

The proof of all aforementioned equivalences is given in \autoref{@comprehension}. Moreover, in~\autoref{@preoccupations}, we prove that the edge-extremal graphs of strict bramble number at most $k$ are precisely the  $k$-domino-trees. Interestingly, $k$-domino-trees enjoy a more elaborate structure than the one of $k$-trees. While all $k$-trees on $n$ vertices have the same number of edges the same does not hold for the $k$-domino-trees on $n$ vertices. As we see in~\autoref{@preoccupations} the number of edges may vary considerably.

\paragraph{Obstructions.} Given a graph class ${\cal G},$ the minor-obstruction set of ${\cal G},$ denoted by $\obs({\cal G}),$ is the set of all minor-minimal graphs that do not belong to ${\cal G}$ (for the definition of the minor relation, see \autoref{@desasosiegos}). In the case where ${\cal G}$ is closed under taking minors -- that is, minors of the graphs in ${\cal G}$ are also in ${\cal G}$ -- then $\obs({\cal G})$ offers an exact characterization of ${\cal G}$ as, for every graph $G,$ $G\in {\cal G}$ iff for every $H \in {\sf Obs}({\cal G}),$ $H$ is not a minor of $G.$ Moreover, this gives a ``finite'' characterization of ${\cal G}$ as, by the Robertson and Seymour theorem \cite{RobertsonS04GMXX}, $\obs({\cal G})$ is always a finite set. Let ${\cal G}_{k}$ be the class of graphs of strict bramble number at most $k.$ As we already mentioned, ${\cal G}_{1}$ is the class of all acyclic graphs, therefore $\obs({\cal G}_{1})=\{K_{3}\}.$  Our next result is the identification of  $\obs({\cal G}_{2}),$ that consists of the three graphs depicted in \autoref{fig_obs}.

\paragraph{{\sf NP}-completeness.} We complete our study by showing that the problem of deciding whether, given a graph $G$ and a non-negative integer $k,$ $\sbn(G) \leq k,$ is an \NP-complete problem. We do so by reducing the computation of treewidth to the computation of the strict bramble number (\autoref{@unflattering}). Notice that, membership in \NP is non-trivial for this problem. For this, our min-max equivalence result (\autoref{@comprehension}) comes in handy. The paper concludes with some open problems on the strict bramble number, presented in \autoref{@mechanisches}.

\section{Preliminaries}
\label{@desasosiegos}

\paragraph{Integers, sets, and tuples.}
We denote by $\Bbb{N}$ the set of non-negative integers. Given two integers $p$ and $q,$ the set $[p,q]$ refers to the set of every integer $r$ such that $p ≤ r ≤ q.$ For an integer $p≥ 1,$ we set $[p]=[1,p].$ 
For a set $S,$ we denote by $2^{S}$ the set of all subsets of $S$ and, given an integer $r∈[|S|],$  we denote by $\binom{S}{r}$ the set of all subsets of $S$ of size $r.$ 

\paragraph{Graphs.} All graphs in this paper are simple, i.e. they are finite and they do not have multiple edges or loops. Given a graph $G$ we denote its vertex and edge set by $V(G)$ and $E(G)$ respectively. Given graphs $H$ and $G,$ $H$ is a subgraph of $G,$ denoted as $H \subseteq G$ if, $V(H) \subseteq V(G)$ and $E(H) \subseteq E(G).$ Given an $S\subseteq V(G),$ we define the subgraph of $G$ induced by $S,$ denoted by $G[S],$ as the graph $G[S] = \big(S,{S\choose 2}\cap E(G)\big).$ Also, we define $G- S = G[V(G)\setminus S]$ and $G - u = G - \{ u \},$ for $u \in V(G).$

\paragraph{Basic definitions.}  Given a graph $G,$ we denote by ${\sf cc}(G),$ the {\em set of all connected components} of $G.$ Let $S\subseteq V(G).$ We define the {\em connectivity-degree} of $S$ as ${\sf cdeg}_{G}(S)=|{\sf cc}(G - S)|,$ i.e. the number of connected components of $G - S.$ We also define ${\sf Acc}(G,S)=\{G[V(C) \cup S]\mid C\in {\sf cc}(G - S)\}$ and we call ${\sf Acc}(G,S)$ the {\em set of all augmented connected components} of $G - S.$ Let $x \in V(G).$ We denote the {\em degree} of $x \in V(G),$ by ${\sf deg}_{G}(x).$ Also for $A,B \subseteq V(G)$ we say that $A$ and $B$ {\em intersect} if $A \cap B \neq \emptyset.$ Given a tree $T$ and some $t \in V(T)$ with ${\sf deg}_{T}(t) = 1,$ we call $t$ a {\em leaf} node of $T.$ Otherwise, we call $t$ an {\em internal} node of $T.$ We also use $K_{k}$ to denote the complete graph on $k$ vertices.

\paragraph{Paths and separators.} Let $x, y \in V(G).$ If $x = y,$ an $x\text{-}y$ path of $G$ is the graph $(\{x\}, \emptyset).$ Otherwise, an $x\text{-}y$ path $P$ of $G$ is any connected subgraph of $G,$ where ${\sf deg}_{P}(x) = 1$ and ${\sf deg}_{P}(y) = 1$ and for any other vertex $z \in V(P),$ ${\sf deg}_{P}(z) = 2.$ The {\em length} of a path $P$ is equal to $|E(P)|.$ The {\em distance} between $x$ and $y$ in $G$ is the minimum number of edges of an $x\text{-}y$ path in $G.$ Given a path $P$ and a vertex $z \in V(P),$ we call $z$ an {\em internal} vertex of $P$ if ${\sf deg}_{P}(z) = 2,$ otherwise we call $z$ a {\em terminal} vertex of $P.$ We call two paths $P$ and $Q$ {\em vertex disjoint} if $V(P) \cap V(Q) = \emptyset$ and {\em internally vertex disjoint} if $V(P) \cap V(Q)$ contains only terminal vertices. Let $X, Y \subseteq V(G).$ An $X\text{-}Y$ path $P$ of $G$ is an $x\text{-}y$ path of $G,$ where $x \in X$ and $y \in Y.$

Let $x,y \in V(G).$ A set $S \subseteq V(G)$ is an $(x,y)$-separator of $G$ if $x$ and $y$ are in different connected components of $G - S.$ $S$ is a {\em minimal} $(x,y)$-separator if none of its proper subsets is an $(x,y)$-separator. A set $S$ is a {\em minimal separator} of $G$ if it is a minimal $(x,y)$-separator for some $x, y \in V(G).$ Also let $X, Y \subseteq V(G).$ A set $S \subseteq V(G)$ is an $(X, Y)$-separator of $G,$ if it is an $(x, y)$-separator of $G,$ for every pair $x,y$ of vertices $x \in X \setminus S,$ $y \in Y \setminus S.$  A graph is $k$-connected, if it contains at least $k+1$ vertices and does not contain any $(x,y)$-separator of less than $k$ vertices.

\begin{lemma}\label{@constellations} Let $G$ be a graph and ${\cal B} \subseteq 2^{V(G)}$ be a strict bramble of $G.$ Also let $X, Y \subseteq V(G)$ be covers of ${\cal B}$ and $S \subseteq V(G)$ be an $(X, Y)$-separator of $G.$ Then $S$ also covers ${\cal B}.$
\end{lemma}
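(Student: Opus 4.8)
The plan is to take an arbitrary set $B \in {\cal B}$ and show that $S \cap B \neq \emptyset$. Since $B$ is connected (being a set of a strict bramble) and $S$ is an $(X,Y)$-separator of $G$, the natural idea is to find a vertex of $B$ in $X$ and a vertex of $B$ in $Y$, and then use connectivity of $B$ to force $B$ to meet $S$. The first step is therefore: since $X$ covers ${\cal B}$, there is some $x \in X \cap B$, and since $Y$ covers ${\cal B}$, there is some $y \in Y \cap B$.

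Now I would distinguish cases according to whether $x$ or $y$ lies in $S$. If $x \in S$ or $y \in S$ we are immediately done, since then $S \cap B \neq \emptyset$. So assume $x \notin S$ and $y \notin S$; then $x \in X \setminus S$ and $y \in Y \setminus S$, so by the definition of an $(X,Y)$-separator, $S$ is an $(x,y)$-separator of $G$, i.e. $x$ and $y$ lie in different connected components of $G - S$. The key step is then: because $B$ is connected and contains both $x$ and $y$, the subgraph $G[B]$ contains an $x$-$y$ path $P$; this path must leave the component of $G-S$ containing $x$ before it can reach $y$, so $P$ contains a vertex of $S$. Hence $V(P) \cap S \neq \emptyset$, and since $V(P) \subseteq B$, we get $B \cap S \neq \emptyset$.

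As this holds for every $B \in {\cal B}$, the set $S$ covers ${\cal B}$, which is the claim. The only mild subtlety — the step I would be most careful about — is the edge case $x = y$: then $x = y \in (X \setminus S) \cap (Y \setminus S)$, so $S$ would have to be an $(x,x)$-separator of $G$, which is impossible since a vertex is never separated from itself; hence this case cannot arise under the assumption $x, y \notin S$, and one of $x, y$ must in fact lie in $S$, again giving $B \cap S \neq \emptyset$. Apart from this, the argument is a routine use of connectivity and the definition of a separator, so I do not anticipate a real obstacle.
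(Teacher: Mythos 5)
Your proof is correct and follows essentially the same route as the paper's: pick $B\in{\cal B}$, note $B$ meets both $X$ and $Y$, and use connectivity of $B$ together with the definition of an $(X,Y)$-separator to conclude $B$ meets $S$. You simply spell out the case analysis (whether the chosen $x\in X\cap B$, $y\in Y\cap B$ lie in $S$, and the degenerate case $x=y$) that the paper leaves implicit.
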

\begin{proof}
Let $B \in {\cal B}.$ Recall that $B$ is connected and by definition, intersects both $X, Y.$ Since $S$ is an $(X, Y)$-separator of $G,$ $B$ also intersects $S.$
\end{proof}

\paragraph{Chordal graphs.}
A graph is {\em chordal} if all its induced cycles are triangles. The next proposition contains some 
folklore observations regarding separators in chordal graphs.

\begin{@menospreciando}\label{@circonlocution}
Let $G$ be a chordal graph and $x, y \in V(G).$ Any minimal $(x,y)$-separator of $G$ induces a clique. Given a minimal $(x,y)$-separator of $G$ there exist vertices $x'$ and $y'$ in the same connected components of $G - S$ as $x$ and $y$ respectively, such that $x'$ and $y'$ are adjacent to every vertex in $S.$ If  $S, S'$ are minimal separators of $G,$ then $S'$ is contained in some augmented connected component of $G - S$ and vice versa.
\end{@menospreciando}

\paragraph{Minors.}
Let $H, G$ be graphs. $H$ is a minor of $G$ and we write $H \leq_{\sf m} G$ if we can obtain $H$ from $G$ through a sequence of vertex deletions, edge deletions and edge contractions. A {\em minor-model of $H$ in $G$} is a function $μ : V(H) \to  2^{V(G)}$ that satisfies the following properties:
\begin{enumerate}
	\item for all $u \in V(H),$ $μ(u)$ is connected;
	\item for any pair $u, v \in V(H),$ $μ(u) \cap μ(v) = \emptyset$;
	\item if $\{u,v\} \in E(H)$ then there exists an edge $\{w,z\} \in E(G)$ such that $w \in μ(u)$ and $z \in μ(v).$
\end{enumerate}
It is well-known that $H$ is a minor of $G$ if and only if there exists a minor model of $H$ in $G.$


\paragraph{Tree decompositions.} 
\label{@imprecaciones}
Let $G$ be a graph, $T$ a tree and let $χ : V(T) \to 2^{V(G)}$ be a function mapping vertices of $T$ to subsets of vertices of $G.$ The pair $(T, χ)$ is a \textit{tree decomposition} of $G$ if it satisfies the following three conditions:
\begin{itemize}
    \item[(C1)] $\bigcup_{t \in V(T)} χ(t) = V(G)$;
    \item[(C2)] for every edge $e \in E(G),$ there is a vertex $t \in V(T),$ such that $e \subseteq χ(t)$;
    \item[(C3)] for every vertex $x \in V(G),$  the set $\{ t \mid x \in χ(t)\}$ is connected in $T.$
\end{itemize}

We refer to the vertices of $T$ as {\em the nodes} of $(T, χ)$ and to their images as {\em the bags} of $(T, χ).$ Given a vertex $x \in V(G),$ we define its {\em trace} in $(T,χ)$ as the set ${\sf Trace}_{(T,χ)}(x) = \{ t\in V(G) \mid x \in χ(t)\}.$ When $(T,χ)$ is clear from the context, we simply write ${\sf Trace}(x).$ Also for any leaf node $t \in V(T)$ with (the unique) incident edge $\{ t, t' \} \in E(T),$ we define the $t$-{\em petal} of $(T, χ),$ as ${\sf Petal}_{(T, χ)}(t) = χ(t) \setminus χ(t'),$ i.e. the private vertices of $t.$ When $(T,χ)$ is clear from the context, we simply write ${\sf Petal}(t).$ We define the \textit{width} of $(T, χ)$ as the maximum size of a bag of $(T, χ)$ minus one. The {\em treewidth} of a graph $G,$ denoted by $\tw(G),$ is the minimum width over all the tree decompositions of $G.$

%
%
%
\section{Lenient tree decompositions}
\label{@unpermissible}

In this section we provide some new concepts and we prove a series of preliminary results on lenient tree decompositions that will be useful for the proof of our main min-max equivalence Theorem in \autoref{@comprehension}.

Let $(T, χ)$ be a lenient tree decomposition of $G.$ The definitions of nodes, bags and $t$-petals of $(T, χ),$ as well as the trace of a vertex of $G,$  are identical to those we gave in the case of tree decompositions. We start with the following easy observation that follows directly from the definition of a lenient tree decomposition.

\begin{observation}\label{@intersubjective}
If $G$ and $H$ are two graphs where $H\leq_{\sf m}G$ and $G$ has a lenient tree decomposition of width at most $k,$ then so does $H.$  
\end{observation}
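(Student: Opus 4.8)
The plan is to reduce the statement to the three elementary minor operations. Since every minor of $G$ is obtained from $G$ by a finite sequence of vertex deletions, edge deletions, and edge contractions, it suffices to show that each single such operation, applied to a graph that carries a lenient tree decomposition of width at most $k$, produces a graph that again carries one. So I would fix a lenient tree decomposition $(T,\chi)$ of $G$ of width at most $k$ and treat the three cases in turn, in each case modifying $\chi$ while keeping the same tree $T$.

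Vertex and edge deletion will be routine. For $H = G - x$ I would set $\chi'(t) = \chi(t)\setminus\{x\}$: conditions \ref{@predominantly} and \ref{@calamitously} survive because $V(H)$ and $E(H)$ consist of vertices and edges of $G$ none of which involves $x$, condition \ref{@unpretentious} survives because the trace of every vertex $y\neq x$ is unchanged, and the width cannot increase. For $H = G - e$, the pair $(T,\chi)$ itself is already a lenient tree decomposition of $H$ of the same width, since removing an edge only weakens \ref{@calamitously}.

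The real work is the contraction case. Suppose $H$ is obtained from $G$ by contracting an edge $\{u,v\}$ into a new vertex $w$. I would define $\chi'$ by replacing, in each bag, every occurrence of $u$ or $v$ by $w$; formally $\chi'(t) = (\chi(t)\setminus\{u,v\})\cup\{w\}$ when $\chi(t)\cap\{u,v\}\neq\emptyset$ and $\chi'(t) = \chi(t)$ otherwise. Then $|\chi'(t)|\le|\chi(t)|$, so the width does not grow, and \ref{@predominantly} is clear. For \ref{@calamitously}: an edge $f$ of $H$ not incident with $w$ is an edge of $G$ avoiding $u$ and $v$, so two close nodes covering it in $(T,\chi)$ still cover it in $(T,\chi')$; an edge $f = \{w,a\}$ of $H$ comes from an edge $\{u,a\}$ or $\{v,a\}$ of $G$, and two close nodes covering that edge in $(T,\chi)$ cover $f$ after the substitution. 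For \ref{@unpretentious}: the trace of a vertex $a\notin\{u,v\}$ is unchanged, while the trace of $w$ in $(T,\chi')$ equals ${\sf Trace}_{(T,\chi)}(u)\cup{\sf Trace}_{(T,\chi)}(v)$. Each of these two sets is connected in $T$, and because $\{u,v\}\in E(G)$, condition \ref{@calamitously} for $(T,\chi)$ provides close nodes $t,t'$ with $\{u,v\}\subseteq\chi(t)\cup\chi(t')$; a small case analysis on where $u$ and $v$ sit among $\chi(t),\chi(t')$ shows that ${\sf Trace}_{(T,\chi)}(u)$ and ${\sf Trace}_{(T,\chi)}(v)$ either share a node of $T$ or are joined by the edge $\{t,t'\}$ of $T$, so their union is connected in $T$. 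Hence $(T,\chi')$ is a lenient tree decomposition of $H$ of width at most $k$.

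I expect the only genuine obstacle, relative to the classical argument for ordinary tree decompositions, to be that last point about the trace of the contracted vertex: for a tree decomposition the edge $\{u,v\}$ lies inside one bag, forcing ${\sf Trace}_{(T,\chi)}(u)\cap{\sf Trace}_{(T,\chi)}(v)\neq\emptyset$, whereas in a lenient tree decomposition these traces may be disjoint, and connectedness of their union has to be extracted from the "close vertices" clause \ref{@calamitously}. Everything else is bookkeeping.
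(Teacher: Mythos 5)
Your proof is correct; the paper itself gives no proof of this observation (it is stated with the remark that it ``follows directly from the definition''), and your argument realizes exactly the intended route: reduce to single minor operations, deletions are immediate, and for an edge contraction define the new bag function by substituting the merged vertex and invoke condition~\ref{@calamitously} to see that the traces of $u$ and $v$ either meet or are bridged by a tree edge, so the trace of the contracted vertex is connected. You correctly identified the only non-routine step, namely that in a lenient tree decomposition the traces of the two endpoints of a contracted edge need not intersect, which is precisely the point that distinguishes this from the classical argument for ordinary tree decompositions.
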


\begin{lemma}\label{@metaphysicians} Let $G$ be a graph and let $(T, χ)$ be a lenient tree decomposition of $G.$ Consider an internal node $t \in V(T)$ of $(T, χ)$ such that there exist different vertices $x,y \in V(G),$ such that, ${\sf Trace}(x)$ is a subtree of a subtree of $T - t$ and ${\sf Trace}(y)$ is a subtree of a different subtree of $T - t.$ Then $χ(t)$ is a $(x,y)$-separator of $G.$
\end{lemma}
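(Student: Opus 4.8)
The plan is to argue by contradiction. Suppose $χ(t)$ is not an $(x,y)$-separator of $G$; then there is an $x$--$y$ path $P=v_0v_1\cdots v_\ell$ in $G-χ(t)$, and I would derive a contradiction from condition~\ref{@calamitously} in the definition of a lenient tree decomposition.

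The first --- and essentially only --- thing I would establish is the following: for every vertex $v\in V(G)\setminus χ(t)$, the set ${\sf Trace}(v)$ is nonempty by~\ref{@predominantly}, is connected in $T$ by~\ref{@unpretentious}, and does not contain $t$; hence $T[{\sf Trace}(v)]$ is a connected subgraph of $T-t$, and therefore ${\sf Trace}(v)$ lies inside a single connected component of $T-t$, which I will call the \emph{side} of $v$ and denote ${\sf side}(v)$. By hypothesis $x,y\notin χ(t)$ (their traces avoid $t$), and ${\sf side}(x)$ and ${\sf side}(y)$ are two \emph{distinct} components of $T-t$. Every $v_i$ on $P$ also avoids $χ(t)$, so each $v_i$ has a well-defined side; since $v_0$ and $v_\ell$ have different sides, $\ell\geq 1$ and there is an index $i\in\{0,\dots,\ell-1\}$ with ${\sf side}(v_i)\neq{\sf side}(v_{i+1})$.

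I would then apply~\ref{@calamitously} to the edge $\{v_i,v_{i+1}\}$: there are close nodes $s,s'$ of $T$ (so $s=s'$, or $s$ and $s'$ are adjacent) with $\{v_i,v_{i+1}\}\subseteq χ(s)\cup χ(s')$; note $s\neq t$ and $s'\neq t$, since each of $χ(s)$ and $χ(s')$ contains a vertex of $P$ and $P$ avoids $χ(t)$. Now I would distinguish two cases. If one of the bags $χ(s),χ(s')$ contains both $v_i$ and $v_{i+1}$ --- say $χ(s)$ does --- then $s\in{\sf Trace}(v_i)\cap{\sf Trace}(v_{i+1})$, so the single node $s$ lies in both ${\sf side}(v_i)$ and ${\sf side}(v_{i+1})$, contradicting that these are distinct components of $T-t$. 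Otherwise $s\neq s'$, they are adjacent in $T$, and, after possibly swapping $v_i$ and $v_{i+1}$, we have $v_i\in χ(s)\setminus χ(s')$ and $v_{i+1}\in χ(s')\setminus χ(s)$; then $s\in{\sf Trace}(v_i)\subseteq{\sf side}(v_i)$ and $s'\in{\sf Trace}(v_{i+1})\subseteq{\sf side}(v_{i+1})$, but the edge $\{s,s'\}$ survives in $T-t$ (neither endpoint is $t$), so $s$ and $s'$ lie in the same component of $T-t$ --- again contradicting ${\sf side}(v_i)\neq{\sf side}(v_{i+1})$.

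I expect the only genuine subtlety to be the observation in the second paragraph, namely that a trace avoiding $t$ is confined to one component of $T-t$; everything after that is a short case analysis. The point where the argument really uses leniency rather than an ordinary tree decomposition is the second case above: the two nodes witnessing an edge need not coincide, yet, being adjacent and distinct from $t$, they cannot straddle $t$, because the edge between them remains present in $T-t$.
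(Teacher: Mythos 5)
Your proof is correct and rests on the same core idea as the paper's: conditions~\ref{@calamitously} and~\ref{@unpretentious} force the traces of consecutive vertices along any $x$--$y$ path to be linked in $T$, so the union of traces along the path must pass through $t$. The paper states this directly by observing that $U=\bigcup_{u\in P}{\sf Trace}(u)$ is connected and hence contains $t$, whereas you run the same connectivity argument as a contradiction and spell out the two-case analysis (single bag vs.\ two adjacent bags) behind that connectivity claim, which makes explicit exactly where leniency is used.
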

\begin{proof} Fix a $x\mbox{-}y$ path $P$ in $G$ and consider the set $U = \bigcup_{u \in P} {\sf Trace}(u).$ Conditions \ref{@calamitously} and \ref{@unpretentious} imply that $U$ is connected in $T.$ So $t \in U,$ since $t$ is in the unique path of $T$ connecting the two subtrees. Thus there exists a vertex of $P$ intersecting $χ(t)$ which implies that it is a $(x,y)$-separator.
\end{proof}

\begin{lemma}\label{@nichtbestehens} Let $G$ be a graph and let $(T, χ)$ be a lenient tree decomposition of $G.$ Let $K \subseteq G$ be an induced clique in $G.$ Then there exists an adjacent pair of nodes $t, t' \in V(T)$ of $(T, χ)$ such that $V(K) \subseteq χ(t) \cup χ(t').$
\end{lemma}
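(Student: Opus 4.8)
The plan is to reduce the statement to the Helly property for subtrees of a tree, after ``thickening'' the traces of the clique vertices. I would write $V(K)=\{v_1,\dots,v_m\}$ and, for $i\in[m],$ put $S_i={\sf Trace}(v_i);$ by \ref{@predominantly} each $S_i$ is non-empty and by \ref{@unpretentious} each $S_i$ induces a subtree of $T.$ (The case $m\le 1$ is trivial as soon as $T$ has an edge, and if $T$ has no edge then $G$ is edgeless by \ref{@calamitously}, so $m\le 1$ anyway; hence I may assume $T$ has at least one edge.) The first step is a distance bound: for all distinct $i,j\in[m],$ the subtrees $S_i$ and $S_j$ are at distance at most one in $T$, i.e.\ either they intersect or some edge of $T$ joins a node of $S_i$ to a node of $S_j.$ This follows because $K$ is an induced clique, so $\{v_i,v_j\}\in E(G),$ and \ref{@calamitously} yields close nodes $t,t'$ with $\{v_i,v_j\}\subseteq χ(t)\cup χ(t');$ then some $a\in\{t,t'\}$ lies in $S_i,$ some $b\in\{t,t'\}$ lies in $S_j,$ and $a,b$ are close since $t,t'$ are.

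Next I would pass to the closed neighbourhoods $S_i^{+}=N_T[S_i],$ that is, the sets of nodes lying in $S_i$ or adjacent to a node of $S_i.$ Each $S_i^{+}$ is again a subtree of $T$, since the closed neighbourhood of a connected vertex set in a tree is connected, and the distance bound makes $\{S_i^{+}\}_{i\in[m]}$ pairwise intersecting: if $a\in S_i$ is adjacent to $b\in S_j$ then $a\in S_i^{+}\cap S_j^{+}.$ Invoking the Helly property of subtrees of a tree, I get a node $r\in\bigcap_{i\in[m]}S_i^{+}.$

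Finally I would analyse $r.$ Split $[m]$ into $I=\{i\mid r\in S_i\}$ and $J=\{i\mid r\notin S_i\}.$ For $i\in I$ we have $v_i\in χ(r).$ For $i\in J,$ since $r\in N_T[S_i]\setminus S_i$ and $S_i$ is connected in the tree $T,$ exactly one neighbour $b_i$ of $r$ lies in $S_i$ (two such neighbours would, together with a path in $S_i,$ create a cycle in $T$). The crux is to show all these $b_i$ with $i\in J$ coincide: if $b_i\neq b_j,$ then $b_i$ and $b_j$ lie in distinct components of $T-r,$ hence so do the connected sets $S_i$ and $S_j$ (which contain $b_i$ and $b_j$ respectively but not $r$); therefore every $S_i$--$S_j$ path in $T$ passes through $r\notin S_i\cup S_j$ and so has length at least two, contradicting the distance bound. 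Letting $t'$ be the common value of the $b_i$ (or any neighbour of $r$ in $T$ if $J=\emptyset$) and $t=r,$ the nodes $t,t'$ are adjacent and every $v_i$ lies in $χ(t)$ (if $i\in I$) or in $χ(t')$ (if $i\in J$), so $V(K)\subseteq χ(t)\cup χ(t'),$ as required.

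I expect the one genuine obstacle to be the coincidence of the nodes $b_i$ for $i\in J;$ the remainder is bookkeeping around two standard facts—the subtree Helly property and the stability of connectedness under taking closed neighbourhoods. The ``distinct components of $T-r$'' observation keeps that step short, but it is precisely where the argument would break if different clique vertices were allowed to reach $r$ along different branches of $T.$
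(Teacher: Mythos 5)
Your proof is correct in its core and takes a genuinely different route from the paper's. The paper's argument orients each node $t$ of $T$ towards the unique component of $T-t$ containing $V(K)\setminus\chi(t)$ (unique because, by \autoref{@metaphysicians}, $\chi(t)$ cannot separate two adjacent clique vertices that both avoid it) and reads off the adjacent pair at the sink of this orientation. You instead thicken the traces to closed neighbourhoods $S_i^{+}=N_T[S_i]$, observe from \ref{@calamitously} that the original traces are pairwise at distance at most one so the thickened subtrees pairwise intersect, apply the Helly property for subtrees of a tree to obtain $r\in\bigcap_i S_i^{+}$, and then argue that all traces not containing $r$ must reach $r$ along the same neighbour $t'$, because two different branches would force two traces into distinct components of $T-r$, contradicting the distance bound. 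The Helly route is pleasantly self-contained --- it does not invoke \autoref{@metaphysicians} --- and turns the orientation argument into a one-line pigeonholing around $r$, at the modest cost of invoking two standard facts (the subtree Helly property and the connectedness of closed neighbourhoods in a tree).

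One small misstep, inessential to the argument: you assert that if $T$ has no edge then $G$ is edgeless by \ref{@calamitously}. That is not so: since ``close'' includes ``identical,'' with $|V(T)|=1$ every edge of $G$ may simply lie inside the single bag $\chi(r).$ In that degenerate case the lemma literally read (``adjacent pair'') has no pair to output; both your proof and the paper's tacitly read the conclusion as ``close pair,'' in which case taking $t=t'=r$ handles it and your side-claim about edgelessness is neither needed nor correct. This does not affect your main argument, which rightly assumes $T$ has an edge.
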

\begin{proof} If there is a bag of $(T, χ)$ that contains $V(K)$ we are done. Assume otherwise. The proof proceeds by orienting every node $t \in V(T)$ towards the subtree containing $V(K) \setminus χ(t).$ By our assumption every leaf of $T$ points towards the inner part of $T.$ Consider an internal node $t \in V(T).$ By condition \ref{@unpretentious}, for every vertex in $x \in V(K) \setminus χ(t),$ ${\sf Trace}(x)$ is exclusively contained in some subtree of $T - t.$ Then, since $K$ is a clique, because of \autoref{@metaphysicians}, $\bigcup_{x \in V(K) \setminus χ(t)} {\sf Trace}(x)$ is exclusively contained in the same subtree of $T - t.$ This implies that $t$ can only point in a single direction and also that adjacent nodes of $(T, χ)$ cannot both point outward. Consider a maximal directed path that respects the directions given to the nodes of $(T, χ),$ which terminates at node $t \in V(T).$ Then there is a $t' \in V(T),$ such that $\{ t, t' \} \in E(T),$ pointing towards $t.$ Then $V(K) \subseteq χ(t) \cup χ(t').$
\end{proof}

\begin{lemma}\label{@deliberately} Let $G$ be a graph and let $(T, χ)$ be a lenient tree decomposition of $G$ such that $|V(T)| \geq 3.$ Also let $S \subseteq V(G)$ be connected. Then for any three nodes $t, t', t'' \in V(T)$ such that $t'$ is an internal node of the unique $t\text{-}t''$ path $P$ in $T,$ if $χ(t) \cap S \neq \emptyset$ and $χ(t'') \cap S \neq \emptyset$ then $χ(t') \cap S \neq \emptyset.$
\end{lemma}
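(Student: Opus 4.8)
The plan is to reduce everything to a single connectivity fact about $T$: if $S \subseteq V(G)$ is connected, then $U := \bigcup_{u \in S} {\sf Trace}(u)$ induces a connected subtree of $T$. Granting this, the lemma is immediate. Indeed, the hypothesis $\chi(t) \cap S \neq \emptyset$ gives a vertex $x \in \chi(t) \cap S$, so $t \in {\sf Trace}(x) \subseteq U$; likewise $t'' \in U$. Since $t'$ lies on the unique $t$-$t''$ path of the tree $T$ and $U$ is connected, we get $t' \in U$, i.e. $t' \in {\sf Trace}(z)$ for some $z \in S$, and hence $z \in \chi(t') \cap S$, as required. (Note $t \neq t''$, since the $t$-$t''$ path has an internal node and is therefore genuinely nontrivial; in particular the hypothesis $|V(T)| \geq 3$ is then automatically satisfied and is not otherwise needed.)

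So the work is in proving the connectivity fact, and here I would argue exactly as in the proof of \autoref{@metaphysicians}. Take two nodes $a, b \in U$, say $a \in {\sf Trace}(x)$ and $b \in {\sf Trace}(y)$ with $x, y \in S$. Since $G[S]$ is connected, fix an $x$-$y$ path $P$ in $G[S]$ with consecutive vertices $x = u_0, u_1, \dots, u_\ell = y$, all lying in $S$. For each edge $\{u_i, u_{i+1}\}$, condition~\ref{@calamitously} provides two close nodes of $T$ whose bags jointly contain this edge; unpacking the two possibilities for ``close'' (identical or adjacent), this means ${\sf Trace}(u_i)$ and ${\sf Trace}(u_{i+1})$ either intersect or are joined by an edge of $T$. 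Combining this with condition~\ref{@unpretentious}, which says each ${\sf Trace}(u_i)$ is connected in $T$, the set $\bigcup_{i=0}^{\ell} {\sf Trace}(u_i)$ is connected in $T$; it contains both $a$ and $b$ and is contained in $U$. As $a, b$ were arbitrary, $U$ is connected.

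I do not foresee any genuine obstacle; the one place to be careful is the step where consecutive traces along $P$ are glued together, where I must use that the two bags witnessing an edge in \ref{@calamitously} may be equal or merely adjacent (rather than assuming the edge sits inside a single bag), so that ${\sf Trace}(u_i)$ and ${\sf Trace}(u_{i+1})$ might only be linked by a tree edge and not actually overlap. Everything else is bookkeeping.
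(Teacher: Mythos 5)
Your proof is correct and organizes the argument somewhat differently from the paper's. The paper's proof picks representatives $x \in \chi(t) \cap S$ and $y \in \chi(t'') \cap S$ and splits into three cases ($x = y$; $x$ adjacent to $y$; $x$, $y$ non-adjacent), handling the first two directly from conditions \ref{@calamitously} and \ref{@unpretentious}, and in the non-adjacent case invoking \autoref{@metaphysicians} to obtain that $\chi(t')$ is an $(x,y)$-separator and then using connectivity of $S$ to conclude that $S$ meets $\chi(t')$. You instead extract one clean reusable fact — the union of the traces of the vertices of a connected set is itself connected in $T$ — and derive the lemma in two lines, avoiding the case split entirely. That fact is essentially the argument embedded inside the proof of \autoref{@metaphysicians} for the traces along a single path in $G$, and you correctly generalize it to a connected set by routing through a path in $G[S]$ and gluing consecutive traces via \ref{@calamitously} and \ref{@unpretentious}, with the care you flag that \ref{@calamitously} may place an edge across two merely adjacent bags rather than inside one. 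Your remark that the hypothesis $|V(T)| \geq 3$ is forced by the existence of an internal node of the $t$-$t''$ path is also correct. The only blemish is notational: you reuse the symbol $P$ both for the $t$-$t''$ path in $T$ (already fixed by the lemma statement) and for your $x$-$y$ path in $G[S]$; rename the latter.
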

\begin{proof}
Let $x \in χ(t) \cap S,$ $y \in χ(t'') \cap S.$ If $x = y$ then by condition \ref{@unpretentious}, $x \in χ(t').$ Else if $x$ and $y$ are adjacent then by condition \ref{@calamitously}, there exists a pair of close nodes $u, v \in V(T)$ where $\{ x, y\} \subseteq χ(u) \cup χ(v).$ By condition \ref{@unpretentious}, we can assume that $u, v \in V(P)$ and thus $P \subseteq {\sf Trace}(x) \cup {\sf Trace}(y),$ which implies that either $x \in χ(t')$ or $y \in χ(t').$ Else if $x$ and $y$ are not adjacent, then, by \autoref{@metaphysicians}, $χ(t')$ is an $(x,y)$-separator, and since $S$ is connected it must intersect $χ(t').$
\end{proof}

\paragraph{Extreme lenient tree decompositions.}
We call a lenient tree decomposition $(T, χ)$ of width $k$ {\em extreme}, if it satisfies the following properties.
\begin{itemize}
\item All bags of $(T, χ)$ are of equal size. 
\item No bag   of $(T, χ)$ is a subset of another one. 
\item For any node $t \in V(T),$ such that ${\sf deg}_{T}(t) = 2,$ and for any two nodes $t', t'' \in V(T)$ such that $t \in V(P),$ where $P$ is the unique $t'\text{-}t''$ path in $T,$ it holds that $χ(t) \nsubseteq χ(t') \cup χ(t'').$ 
\item For every pair $t', t'' \in V(T)$ of different leaf neighbours of a node $t \in V(T),$ $|{\sf Petal}(t') \cup {\sf Petal}(t'')| > k.$
\end{itemize}

\begin{lemma}\label{@comprehensibility} Let $G$ be a graph. If there exists a lenient tree decomposition of $G$ of width $k,$ then there exists an extreme lenient tree decomposition $(T, χ)$ of $G$ of the same width.
\end{lemma}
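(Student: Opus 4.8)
The plan is to start from an arbitrary lenient tree decomposition $(T,χ)$ of $G$ of width $k$ and to repeatedly apply local modifications, each preserving ``being a lenient tree decomposition of $G$'' and keeping the maximum bag size equal to $k$, until none applies; the resulting decomposition will be extreme. I would use the following four operations, applied with decreasing priority. \textbf{(contract)} If $χ(t)\subseteq χ(t')$ for two adjacent nodes $t,t'$, contract the edge $\{t,t'\}$, keeping the bag $χ(t')$. By \ref{@unpretentious}, if $χ(t)\subseteq χ(t')$ for \emph{any} two nodes then the same containment holds for the neighbour of $t$ on the $t$-$t'$ path, so this is applicable precisely when the second extremeness property fails. \textbf{(pad)} If some bag $χ(t)$ has size $<k$ and has a neighbour $t'$ with $χ(t')\setminus χ(t)\neq\emptyset$, add one such vertex to $χ(t)$; condition \ref{@unpretentious} survives because the trace of the added vertex keeps $t'$ and gains the adjacent node $t$, and \ref{@predominantly}, \ref{@calamitously} are immediate. \textbf{(dissolve)} If a node $t$ has ${\sf deg}_{T}(t)=2$ with neighbours $a,b$ and $χ(t)\subseteq χ(a)\cup χ(b)$, delete $t$ and add the edge $\{a,b\}$; by \ref{@unpretentious}, $χ(t)\subseteq χ(t')\cup χ(t'')$ for nodes $t',t''$ on opposite sides of $t$ forces $χ(t)\subseteq χ(a)\cup χ(b)$, so this is applicable precisely when the third extremeness property fails. \textbf{(merge)} If a node $t$ has two distinct leaf neighbours $t',t''$ with $|{\sf Petal}(t')\cup{\sf Petal}(t'')|\leq k$, delete $t'$ and $t''$ and attach to $t$ a fresh leaf whose bag is ${\sf Petal}(t')\cup{\sf Petal}(t'')$ together with exactly $k-|{\sf Petal}(t')\cup{\sf Petal}(t'')|$ further vertices of $χ(t)$.

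Next I would verify that each operation yields a lenient tree decomposition again. For (contract) this is the usual computation. For (dissolve) and (merge) the only delicate point is \ref{@calamitously}: an edge of $G$ that was witnessed by a pair of close nodes involving the deleted node(s) must still be witnessed, which follows from a short case analysis on which of $χ(a),χ(b)$ (respectively ${\sf Petal}(t'),{\sf Petal}(t''),χ(t)$) contains each endpoint, using that petal vertices are private to their leaf; \ref{@predominantly} and \ref{@unpretentious} are checked similarly. Also, no operation raises the maximum bag size above $k$: for (merge) the new bag has size exactly $k$ by construction, which requires $|χ(t)|=k$ --- this is why (merge) has lowest priority.

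For termination I would use the potential $\Phi(T,χ)=\bigl(|V(T)|,\ \sum_{t\in V(T)}(k-|χ(t)|)\bigr)$ with the lexicographic order on $\mathbb{N}^{2}$, a well-order: (pad) fixes $|V(T)|$ and strictly decreases the second coordinate, while (contract), (dissolve) and (merge) strictly decrease $|V(T)|$, so every step strictly decreases $\Phi$. The priority ordering also governs bag sizes: since (contract) and (pad) outrank (dissolve) and (merge), whenever one of the latter fires neither of the former is applicable, and --- as in the next paragraph --- this forces every bag to have size $k$ at that moment; hence (dissolve) and (merge) are only ever performed when all bags have size $k$, so they keep the maximum bag size equal to $k$ (and keep at least one bag of size $k$). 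Since we start from width $k$, an induction on the number of steps shows that the maximum bag size stays equal to $k$ throughout.

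Finally, at termination no operation applies, and I would read off extremeness from this. Non-applicability of (contract), (dissolve), (merge) is exactly the second, third, fourth extremeness property. For the first: if some bag $χ(t)$ had size $<k$, then since (pad) does not apply every neighbour's bag is contained in $χ(t)$, so (contract) applies --- unless $t$ has no neighbour, but then $χ(t)=V(G)$ has size $k$ (the maintained maximum), a contradiction; so all bags have size $k$, hence all equal, and the width is $k$. I expect the main obstacle to be exactly this interplay: making sure the invariant ``the maximum bag size is $k$'' is never broken --- in particular that (dissolve) never deletes the unique largest bag and that (merge) can always pad its new bag up to size $k$ --- which is what dictates both the priority ordering and the inductive proof of the invariant; re-establishing \ref{@calamitously} after suppressing a degree-$2$ node or fusing two leaves is the secondary technical point.
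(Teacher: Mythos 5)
Your proof is correct and essentially matches the paper's, which picks a lenient tree decomposition minimizing the lexicographic potential $\bigl(\sum_{t}(k-|\chi(t)|),\,|V(T)|\bigr)$ and shows that each failure of an extremeness property yields a strictly smaller decomposition via the same four local modifications you use. The only difference is presentation: you run the modifications as an explicit rewriting system (with the two coordinates of the potential swapped, which is equally well-founded) rather than arguing from the existence of a global minimizer.
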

\begin{proof} Define $(T, χ)$ as the lenient tree decomposition of width $k,$ that minimizes:
\begin{eqnarray}
k|V(T)| - \sum_{t \in V(T)} |χ(t)|\label{@ridiculement}
\end{eqnarray}
and subject to \eqref{@ridiculement}, minimizes:
\begin{eqnarray}
|V(T)|.\label{@gegenstandes}
\end{eqnarray}
 We prove that $(T, χ)$ is extreme. If $|V(T)| \leq 1,$ then our claim trivially holds. Now, assume it is not. We distinguish cases.

Suppose that there exists a pair of bags of $(T, χ)$ with unequal size. Then there is also a pair of adjacent nodes $t, t' \in V(T)$ such that $|χ(t)| < |χ(t')|.$ Then we can add vertices of $χ(t') \setminus χ(t)$ to $χ(t).$ This contradicts the minimality of \eqref{@ridiculement}.

Now, suppose that there exists a bag that is a subset of another bag of $(T, χ).$ Then, because of condition \ref{@unpretentious}, there is also a pair of adjacent nodes $t, t' \in V(T)$ such that $χ(t) \subseteq χ(t').$ Then we can remove node $t$ from $(T, χ)$ and connect its neighbours with $t'.$  This contradicts the minimality of \eqref{@ridiculement}.

Let $t \in V(T)$ with ${\sf deg}_{T}(t) = 2$ and a pair of nodes $t', t'' \in V(T),$ such that $t \in V(P),$ where $P$ is the unique $t'\text{-}t''$ path in $T.$ Suppose that $χ(t) \subseteq χ(t') \cup χ(t'').$ Then we can remove $t,$ and make $t',t''$ adjacent.  This contradicts the minimality of \eqref{@gegenstandes}.

Finally, suppose that there is a node $t \in V(T),$ such that there exists a pair $t', t'' \in V(T)$ of different leaf neighbours of $t$ such that $|{\sf Petal}(t') \cup {\sf Petal}(t'')| \leq k.$ Then we can identify the two leaves into a single leaf, whose bag will contain ${\sf Petal}(t') \cup {\sf Petal}(t''),$ plus some additional vertices of $χ(t) \setminus ({\sf Petal}(t') \cup {\sf Petal}(t'')),$ to ensure that the bag corresponding to the new leaf has size $|χ(t)|.$ This contradicts the minimality of \eqref{@gegenstandes}.
%
\end{proof}

Given a graph $G$ and a lenient tree decomposition $(T, χ)$ of $G,$ we define the {\em $(T, χ)$-completion} of $G$ as the graph $G^{+} = (V(G),E^+)$ where,
\begin{align*}
&E^+=\bigcup_{{t,t'\in V(T):}\atop{\text{$t,t'$ are close in $T$}}} {χ(t) \cup χ(t^{\prime})\choose 2}.
\end{align*}
That is, we add edges (if they do not already exist) between all vertices of each bag or of each two ``neighboring'' bags. 
Clearly, $(T, χ)$ is also a lenient tree decomposition of $G^{+}.$

\begin{lemma}\label{@ridiculously} Let $G$ be a graph and let $(T, χ)$ be a lenient tree decomposition of $G.$ Also let $G^{+}$ be the $(T, χ)$-completion of $G.$ Then $G^{+}$ is chordal.
\end{lemma}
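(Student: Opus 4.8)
The plan is to show $G^{+}$ has no induced cycle of length $\geq 4$ by contradiction. Suppose $C = v_1 v_2 \cdots v_\ell v_1$ is an induced cycle of $G^{+}$ with $\ell \geq 4$. Since $V(C)$ is connected in $G^{+}$ and $(T,\chi)$ is a lenient tree decomposition of $G^{+}$, I would like to locate the traces of the $v_i$ in $T$ and derive a contradiction with the chord-freeness of $C$. The key tool is that every edge of $C$ lives in the union of two close bags (by \ref{@calamitously} applied to $G^{+}$), and the consecutive-trace condition \ref{@unpretentious} forces traces of adjacent cycle vertices to ``interlock'' along paths of $T$.

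First I would pick, for each vertex $v_i$ of $C$, a node $t_i \in {\sf Trace}(v_i)$; and for each edge $\{v_i,v_{i+1}\}$ of $C$ (indices mod $\ell$), a pair of close nodes witnessing that edge via \ref{@calamitously}. Then I would consider the subtree $T' = \bigcup_{i} {\sf Trace}(v_i)$ of $T$, which by \ref{@calamitously} and \ref{@unpretentious} is connected (the argument is exactly as in the proof of \autoref{@metaphysicians}: the trace of a path in $G^{+}$ is connected in $T$, and $C$ minus any vertex is such a path). The idea is to find a ``peripheral'' node of $T'$ — e.g. a leaf $t^{*}$ of $T'$ (or more carefully, a node where the tree $T'$ can be cut so that only one ``piece'' of $C$ sticks out) — and argue that the vertices of $C$ whose traces reach $t^{*}$ form a contiguous arc $v_a v_{a+1} \cdots v_b$ of the cycle, with the two endpoints $v_a, v_b$ being the only ones that can have neighbours (on $C$) outside. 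Pushing this around the cycle, one obtains that $\chi(t^{*})$ (together with possibly an adjacent bag) must contain a separator structure that forces a chord of $C$: concretely, take $t^*$ a leaf of $T'$ with incident edge towards $t'$ inside $T'$; every $v_i$ with $t^* \in {\sf Trace}(v_i)$ lies in $\chi(t^*)$, hence any two such $v_i$ are adjacent in $G^{+}$, so this set of vertices is a clique of $G^{+}$, thus a contiguous arc of the induced cycle $C$ of length at most $2$ (in a chordless cycle of length $\geq 4$ a clique has $\leq 2$ vertices). Using \autoref{@metaphysicians} and \autoref{@deliberately} for $G^{+}$ with the connected set $V(C)$, I would show that every vertex of $C$ whose trace does not meet $\chi(t^{*})$ actually has its trace inside $T - t^*$, and that walking the cycle forces an edge between the two ends of consecutive arcs, ultimately producing a chord.

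More cleanly, here is the route I expect to use. Induct on $\ell$. Apply \autoref{@metaphysicians}/\autoref{@deliberately} to the connected set $V(C)$ in $G^{+}$: if some bag $\chi(t)$ meets $V(C)$ in a set that separates $C$ into two nonempty parts lying in different components of $T-t$, then since $V(C)\cap\chi(t)$ is a clique in $G^{+}$ it has size $\leq 2$ (else a chord), but a chordless cycle of length $\geq 4$ has no separator of size $\leq 2$ consisting of a clique with both parts nonempty unless the two clique-vertices are consecutive on $C$, in which case removing one of them from the cycle and contracting gives a shorter induced cycle in a smaller completion, contradicting the induction hypothesis. If no bag separates $V(C)$ non-trivially, then by connectivity of the traces every ${\sf Trace}(v_i)$ must pairwise intersect a common node, i.e. all of $V(C)$ lies in a single bag $\chi(t)$, making $V(C)$ a clique in $G^+$ — impossible for $\ell \geq 4$. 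The main obstacle, and where I would spend the most care, is the bookkeeping in the separator case: precisely controlling which arc of $C$ ends up on which side of $\chi(t)$, verifying that the two ``boundary'' cycle-vertices both land in $\chi(t)$, and checking that the contraction used for the induction step produces a genuine lenient tree decomposition of the shorter object (which follows from \autoref{@intersubjective} since contracting an edge of $G^+$ yields a minor). I would also double-check the base case $\ell = 4$ by hand, since there the clique bound ``size $\leq 2$'' is tight and the separator argument degenerates.
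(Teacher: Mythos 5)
Your core mechanism is the right one and matches the paper's: a bag of $(T,\chi)$ whose trace lies between two non-adjacent cycle vertices is, by \autoref{@metaphysicians}, a separator in $G^{+}$; that bag is a clique in $G^{+}$ (a single bag is covered by the ``close'' condition with $t=t'$); both arcs of the cycle must cross it in interior vertices; those two crossing points are adjacent in $G^{+}$ and non-consecutive on the cycle, i.e.\ a chord. The paper's proof does exactly this, with no detours: fix a non-adjacent pair $x,y$ on the cycle, observe that since the union of two close bags is a clique and $x \not\sim y$ in $G^{+}$ the closest nodes $t \ni x$, $t' \ni y$ are at distance at least two, take an internal node $t''$ of the $t\text{-}t'$ path, and note the two arcs $P,P'$ both hit $\chi(t'')$ in interior, distinct, hence adjacent, vertices.

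However, the extra scaffolding in your route introduces real problems. First, the branch ``no bag separates $V(C)$ non-trivially $\Rightarrow$ the traces pairwise intersect $\Rightarrow$ all of $V(C)$ lies in one bag'' does not work for lenient tree decompositions: condition~\ref{@calamitously} only promises that adjacent vertices of $G^{+}$ are covered by the union of two \emph{close} bags, so traces of adjacent vertices need not intersect, and the Helly property of subtrees does not apply. (In fact this branch is vacuous once you make the paper's observation that two non-adjacent vertices of $G^{+}$ have closest nodes at distance $\geq 2$, hence always admit a separating interior bag — but you did not establish that vacuity, and as written the branch is handled incorrectly.) Second, the contingency ``the two clique vertices are consecutive on $C$'' cannot arise: two consecutive vertices do not separate a cycle, so they never give you ``two nonempty parts.'' Consequently the induction on $\ell$ and the contraction step are unnecessary, and that step is itself under-specified — contracting an edge of $G^{+}$ does yield a minor with a lenient tree decomposition by \autoref{@intersubjective}, but it is not immediate that the resulting induced cycle sits in the $(T,\chi)$-completion of some graph, which is what you would need to close the induction. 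I would drop the induction entirely, choose a specific non-adjacent pair $x,y$ on $C$, use the closest-nodes observation to find an interior bag $\chi(t'')$ with $x,y\notin\chi(t'')$, and argue directly that the two arcs contribute distinct interior vertices of $\chi(t'')$ forming a chord.
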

\begin{proof}
Suppose that there exists a pair $x,y \in V(G)$ of non-adjacent vertices of $G^{+}$ that belong to an induced cycle of size at least four. Since $x, y$ are not adjacent and the union of the bags of any adjacent pair of nodes of $(T, χ)$ induces a clique in $G^{+},$ the closest nodes $t, t' \in V(T),$ whose bags contain $x,y$ respectively are at distance at least two in $T.$ Let $t'' \in V(T)$ be an internal vertex of a $t\text{-}t'$ path in $T.$ Because of \autoref{@metaphysicians}, $χ(t'')$ is a $(x,y)$-separator in $G^{+}.$ Let $P, P' \subseteq G^{+}$ be the two internally vertex disjoint $x\text{-}y$ paths that define this cycle. These paths intersect $χ(t'')$ and since $χ(t'')$ is a clique, the cycle has two non-consecutive vertices that are adjacent, which contradicts our assumption. Thus $G^{+}$ is chordal.
\end{proof}

\begin{lemma}\label{@saupoudrerez} Let $G$ be a graph and let $(T, χ)$ be an extreme lenient tree decomposition of $G$ of width $k.$ Also let $G^{+}$ be the $(T, χ)$-completion of $G.$ Then for any pair of nodes $t, t' \in V(T)$ of $(T, χ)$ there exist $k$ disjoint $χ(t)\text{-}χ(t')$ paths in $G^{+}.$
\end{lemma}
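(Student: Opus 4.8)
The plan is to pass through Menger's theorem: it suffices to show that $G^{+}$ has no $\chi(t)\text{-}\chi(t')$ vertex separator of size less than $k$, for then there are $k$ pairwise vertex-disjoint $\chi(t)\text{-}\chi(t')$ paths in $G^{+}$ (and in fact exactly $k$, a perfect linkage between the two bags, since $\chi(t)$ is itself a $\chi(t)\text{-}\chi(t')$ separator of size $k$). The only ingredients I intend to use are: (i) since $(T,\chi)$ is extreme and the width of a lenient tree decomposition is the \emph{maximum} bag size, every bag of $(T,\chi)$ has size exactly $k$; and (ii) by the definition of the $(T,\chi)$-completion, $\chi(s)$ induces a clique in $G^{+}$ for every node $s$ of $T$, and $\chi(s)\cup\chi(s')$ induces a clique in $G^{+}$ for every edge $ss'$ of $T$.

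So suppose, for contradiction, that $S\subseteq V(G^{+})$ is a $\chi(t)\text{-}\chi(t')$ separator with $|S|<k$, and let $t=t_{0},t_{1},\dots,t_{m}=t'$ be the unique $t\text{-}t'$ path in $T$ (with $m=0$ if $t=t'$). First I would note that, since $|\chi(t_{i})|=k>|S|$, each set $\chi(t_{i})\setminus S$ is nonempty, and, being an induced clique of $G^{+}$ with some vertices deleted, it is connected in $G^{+}-S$. Next, for each $i<m$, both $\chi(t_{i})\setminus S$ and $\chi(t_{i+1})\setminus S$ are contained in $(\chi(t_{i})\cup\chi(t_{i+1}))\setminus S$, which is again a nonempty induced clique of $G^{+}$ minus some vertices, hence connected in $G^{+}-S$; therefore $\chi(t_{i})\setminus S$ and $\chi(t_{i+1})\setminus S$ lie in the same connected component of $G^{+}-S$. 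A one-line induction along $P$ then shows that $\bigcup_{i=0}^{m}(\chi(t_{i})\setminus S)$ is contained in a single connected component of $G^{+}-S$. In particular, picking any $x\in\chi(t)\setminus S$ and $y\in\chi(t')\setminus S$ (both nonempty), there is an $x\text{-}y$ path in $G^{+}-S$, i.e.\ a $\chi(t)\text{-}\chi(t')$ path of $G^{+}$ avoiding $S$ (the single vertex $x$ when $x=y$) — contradicting the choice of $S$. This argument covers the degenerate cases ($t=t'$, and $t,t'$ adjacent) with no extra work.

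Hence every $\chi(t)\text{-}\chi(t')$ separator of $G^{+}$ has at least $k$ vertices, so Menger's theorem yields $k$ pairwise vertex-disjoint $\chi(t)\text{-}\chi(t')$ paths in $G^{+}$, as desired. I do not anticipate a genuine obstacle in this proof; the points to keep in mind are to use the ``linkage'' form of Menger (minimum $\chi(t)\text{-}\chi(t')$ vertex cut equals maximum number of vertex-disjoint $\chi(t)\text{-}\chi(t')$ paths, with length-zero paths through $\chi(t)\cap\chi(t')$ allowed), so that overlapping bags require no separate treatment, and to invoke extremeness precisely to force all bags to have size exactly $k$ — without uniform bag sizes, a small bag along $P$ could itself be a separator of size below $k$ and the whole argument would collapse.
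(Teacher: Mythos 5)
Your proof is correct, but it takes a genuinely different route from the paper's. The paper argues constructively: by induction on the distance of $t,t'$ in $T$, it takes the $k$ disjoint $\chi(t)\text{-}\chi(t'')$ paths guaranteed for the penultimate node $t''$ on the $t\text{-}t'$ path and extends each one across the clique on $\chi(t')\cup\chi(t'')$ to reach $\chi(t')$ (the bag sizes being uniformly $k$ make this extension possible disjointly). You instead go through Menger's theorem, showing that every $\chi(t)\text{-}\chi(t')$ separator must have at least $k$ vertices: any $S$ with $|S|<k$ leaves each bag $\chi(t_i)$ along the path with a nonempty, connected remainder in $G^+-S$, and the clique on $\chi(t_i)\cup\chi(t_{i+1})$ glues consecutive remainders into one component, so $S$ separates nothing. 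Both approaches turn on exactly the same two structural facts — extremeness forces all bags to size $k$, and the completion makes unions of close bags into cliques — so the difference is one of presentation: the paper's induction hands you the paths explicitly, while your argument is shorter and handles the degenerate cases ($t=t'$, adjacent $t,t'$) without a base case, at the cost of invoking Menger as a black box.
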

\begin{proof}
The proof proceeds by induction on the distance of $t, t'$ in $T.$ If $t = t'$ or $t, t'$ are adjacent, the claim trivially holds. Assume that $t, t'$ are at distance $r > 1.$ Consider the unique $t\text{-}t'$ path in $T$ and consider the neighbour of $t'$ on this path, say $t''.$ By the inductive hypothesis there are $k$ disjoint $χ(t)\text{-}χ(t'')$ paths in $G.$ Since $χ(t') \cup χ(t'')$ induces a clique in $G^{+}$ we can easily extend these paths to disjoint $χ(t)\text{-}χ(t')$ paths.
\end{proof}

\begin{lemma}\label{@manipulationen} Let $G$ be a graph and let $(T, χ)$ be an extreme lenient tree decomposition of $G$ of width $k.$ Also let $G^{+}$ be the $(T, χ)$-completion of $G.$ Then, if $S \subseteq V(G^{+})$ is a minimal $(x,y)$-separator of $G^{+},$ there exists a node $t \in V(T)$ of $(T, χ)$ such that $S = χ(t).$
\end{lemma}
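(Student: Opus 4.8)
The plan is to lean on three facts already available for the completion $G^{+}$: it is chordal (\autoref{@ridiculously}), so by \autoref{@circonlocution} the minimal $(x,y)$-separator $S$ induces a clique and, on the two sides of $G^{+}-S$ containing $x$ and $y$, every vertex of $S$ has a neighbour; \autoref{@nichtbestehens}, applied to the induced clique $G^{+}[S]$, confines $S$ to the union $χ(t)\cup χ(t')=:Q$ of two adjacent bags; and \autoref{@saupoudrerez} guarantees, for any two nodes, $k$ pairwise disjoint paths between their bags in $G^{+}$. The first two facts will be used to push $S$ inside one bag, and the third --- together with the fact that in an extreme decomposition every bag has size exactly $k$ --- will turn the resulting inclusion into an equality. (If $|V(T)|\le 2$ then $G^{+}$ is complete, hence has no separator and the statement is vacuous; so assume $|V(T)|\ge 3$.)

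\textbf{Combinatorial set-up.} Fix adjacent nodes $t,t'$ with $S\subseteq Q=χ(t)\cup χ(t')$ (\autoref{@nichtbestehens}); recall $Q$ is a clique of $G^{+}$. For a component $R$ of $T-\{t,t'\}$ put $V_{R}=\bigl(\bigcup_{s\in R}χ(s)\bigr)\setminus Q$. Using condition \ref{@unpretentious} together with an argument in the spirit of \autoref{@metaphysicians} --- an edge of $G^{+}$ between $V_{R}$ and $V_{R'}$ with $R\ne R'$ would be realised by a pair of close nodes forced to lie in the disjoint subtrees $R,R'$, which is impossible --- one obtains that the sets $V_{R}$ partition $V(G^{+})\setminus Q$ and host no $G^{+}$-edge between one another. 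Consequently, every connected subset of $G^{+}$ that avoids $Q$ has all of its traces inside a single component $R$ of $T-\{t,t'\}$, and (since $T$ is a tree) each such $R$ is attached to exactly one of $t,t'$, by a single edge.

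\textbf{Pushing $S$ into a bag.} Call a component $C$ of $G^{+}-S$ \emph{full} if every vertex of $S$ has a neighbour in $C$; by \autoref{@circonlocution} the components of $G^{+}-S$ containing $x$ and $y$ are full, and they are distinct. Since $Q$ is a clique, $Q\setminus S$ lies in a single component of $G^{+}-S$, so at least one of those two full components --- call it $C$ --- is disjoint from $Q$. By the set-up, the traces of $C$ lie in one component $R$ of $T-\{t,t'\}$; swapping $t$ and $t'$ if necessary, assume $R$ is attached to $t$ by the edge $rt$. Now take any $s\in S$: since $C$ is full, some edge $sv$ with $v\in C$ is realised by a pair of close nodes; as ${\sf Trace}(v)\subseteq R$ while ${\sf Trace}(s)$ can meet $R$ only if it contains $t$ (the sole gateway from the rest of $T$ into $R$ being $rt$), checking the two close nodes forces $t\in{\sf Trace}(s)$, i.e.\ $s\in χ(t)$. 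Hence $S\subseteq χ(t)$ and $|S|\le k$. This case check --- essentially, ruling out a vertex of $S$ lying in $χ(t')\setminus χ(t)$ --- is the only genuinely delicate point; everything else is bookkeeping with the tree.

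\textbf{Forcing equality.} Suppose $S\subsetneq χ(t)$. Then $χ(t)\setminus S$ is a nonempty clique of $G^{+}$, hence lies in a single component $C'$ of $G^{+}-S$, and $C'\ne C$ because $C\cap χ(t)\subseteq C\cap Q=\emptyset$. Pick any node $b$ with $χ(b)\cap C\ne\emptyset$; since each bag induces a clique in $G^{+}$, any vertex of $χ(b)$ outside $S$ is adjacent to a vertex of $C$ and therefore lies in $C$, so $χ(b)\setminus S\subseteq C$. Therefore $S$ is a $(χ(t),χ(b))$-separator of $G^{+}$, with $b\ne t$ (as $χ(b)$ meets $C$ while $χ(t)$ does not); it must then intersect each of the $k$ pairwise disjoint $χ(t)$--$χ(b)$ paths supplied by \autoref{@saupoudrerez}, whence $|S|\ge k=|χ(t)|$ --- contradicting $S\subsetneq χ(t)$. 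Thus $S=χ(t)$, completing the proof.
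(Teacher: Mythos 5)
Your proof is correct and follows essentially the same strategy as the paper's: locate $S$ inside $\chi(t)\cup\chi(t')$ via \autoref{@ridiculously} and \autoref{@nichtbestehens}, then use the tree structure (a component of $T-\{t,t'\}$ that hosts the traces of a full component of $G^{+}-S$ avoiding $Q$) to rule $S$ out of $\chi(t')\setminus\chi(t)$, and finally invoke \autoref{@saupoudrerez} to force $|S|\ge k=|\chi(t)|$. The paper routes the first step through the single vertex $x'$ adjacent to all of $S$ (from \autoref{@circonlocution}) and obtains $|S|\ge k$ from $k$ internally disjoint $x$--$y$ paths directly, but these are only cosmetic differences from your full-component version.
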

\begin{proof}
Since $x, y \in V(G^{+})$ are not adjacent they do not belong in an adjacent pair of bags of $(T, χ).$ Then \autoref{@saupoudrerez} easily implies that there are $k$ internally vertex disjoint $x\text{-}y$ paths in $G^{+}.$ Then, by Menger's Theorem, $S$ has size at least $k.$ Additionally, by \autoref{@ridiculously}, $G^{+}$ is chordal and thus $S$ induces a clique in $G^{+}.$ Then, \autoref{@nichtbestehens}, implies that there exists an adjacent pair of nodes $t, t' \in V(T)$ such that $S \subseteq χ(t) \cup χ(t').$ Also, since the closest bags of $(T, χ)$ that contain $x,y$ are at distance at least two, at least one of $x$ or $y$ {cannot be in  $χ(t)\cup χ(t')$}. Assume it is $x$ and that $χ(t)$ is closer than $χ(t')$ to the closest bag containing $x.$ Since $G^{+}$ is chordal we can assume that $x$ is adjacent to all vertices of $S.$ Now observe that conditions \ref{@calamitously} and \ref{@unpretentious} imply that $x$ cannot be adjacent with any vertex in $χ(t') \setminus χ(t)$ which implies that $S$ cannot contain any vertex in $χ(t') \setminus χ(t).$ Thus $S = χ(t).$
\end{proof}

\begin{lemma}\label{@perspiration} Let $G$ be a graph and let $(T, χ)$ be an extreme lenient tree decomposition of $G.$ Also let $G^{+}$ be the $(T, χ)$-completion of $G.$ Then there is a unique bijection between minimal separators $S \subseteq V(G^{+})$ of $G^{+}$ and nodes $t \in V(T)$ of $(T, χ),$ such that $S = χ(t),$ ${\sf cdeg}_{G^{+}}(S) = {\sf deg}_{T}(t)$ and for any maximal clique $K \subseteq G^{+},$ such that $S \subseteq V(K),$ there is a node $t' \in V(T),$ adjacent to $t,$ such that $V(K) = χ(t) \cup χ(t').$
\end{lemma}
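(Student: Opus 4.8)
The plan is to prove the bijection via the two inclusions: by Lemma~\ref{@manipulationen} every minimal separator of $G^+$ is a bag $χ(t)$, and conversely I will show that $χ(t)$ is a minimal separator exactly when $t$ is an internal node, in which case moreover ${\sf cdeg}_{G^+}(χ(t))={\sf deg}_T(t)$. Since the bags of an extreme decomposition are pairwise incomparable, hence distinct, the map $t\mapsto χ(t)$ is injective, and these two facts make its restriction to the internal nodes the (unique, since $S=χ(t)$ determines $t$) bijection onto the minimal separators of $G^+$. We may assume $|V(T)|\ge 2$ and $k\ge 1$, the remaining cases being degenerate.

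The heart of the argument is a connectivity claim: \emph{for any $t\in V(T)$ and any component $T'$ of $T-t$, with $s$ the unique node of $T'$ adjacent to $t$, the set $A=\{x\in V(G)\mid {\sf Trace}(x)\subseteq V(T')\}$ is nonempty and $G^+[A]$ is connected.} The crucial point, where extremeness enters, is that $χ(u)\setminus χ(t)\neq\emptyset$ for \emph{every} $u\in V(T')$: if $x\in χ(u)\cap χ(t)$ then ${\sf Trace}(x)$ contains $u$ and $t$, hence the $u$--$t$ path in $T$, hence $s$, so $χ(u)\cap χ(t)\subseteq χ(s)\cap χ(t)$; and $|χ(s)\cap χ(t)|\le k-1$ since $χ(s)\not\subseteq χ(t)$, so $|χ(u)\setminus χ(t)|\ge 1$ because $|χ(u)|=k$. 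Any vertex of $χ(u)\setminus χ(t)$ has its trace contained in $T'$ (it contains $u$ and avoids $t$), so it lies in $A$; in particular $χ(u)\cap A=χ(u)\setminus χ(t)$ is nonempty for every $u\in V(T')$, and $A\neq\emptyset$. Now given $a\in A$, choose $u_0\in V(T')$ with $a\in χ(u_0)$ and take the path $u_0,u_1,\dots,u_r=s$ in $T'$; for consecutive $j$ the nodes $u_j,u_{j+1}$ are adjacent, so $χ(u_j)\cup χ(u_{j+1})$ induces a clique in $G^+$, and therefore $(χ(u_j)\cap A)\cup(χ(u_{j+1})\cap A)$ is a clique of $G^+[A]$ with both parts nonempty. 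Chaining these cliques shows $a$ is connected in $G^+[A]$ to $χ(s)\cap A$, which is itself a clique; hence $G^+[A]$ is connected.

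Granting the claim, suppose first $t$ is internal with ${\sf deg}_T(t)=d\ge 2$ and let $T_1,\dots,T_d$ be the components of $T-t$, with corresponding sets $A_1,\dots,A_d$ as above. A trace avoiding $t$ is contained in a unique $T_i$, so $V(G^+)\setminus χ(t)=A_1\sqcup\cdots\sqcup A_d$. There is no $G^+$-edge between distinct $A_i,A_j$: such an edge would lie in $χ(u)\cup χ(v)$ for close $u,v$, and the traces of its endpoints force one of $u,v$ into $T_i$ and one into $T_j$ while $u,v$ are close, which is impossible since $T_i,T_j$ are distinct components of $T-t$. Combined with the claim this gives ${\sf cc}(G^+-χ(t))=\{A_1,\dots,A_d\}$, so ${\sf cdeg}_{G^+}(χ(t))={\sf deg}_T(t)$, and $χ(t)$ is an $(x,y)$-separator for any $x\in A_1$, $y\in A_2$. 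Moreover any minimal $(x,y)$-separator $S'\subseteq χ(t)$ (obtained by shrinking $χ(t)$) is a minimal $(x,y)$-separator of $G^+$, hence by Lemma~\ref{@manipulationen} equals some bag, whose size is $k=|χ(t)|$; thus $S'=χ(t)$ and $χ(t)$ is a minimal separator. If instead $t$ is a leaf, then $V(G^+)\setminus χ(t)=A$ for the single component $T'=T-t$, which is connected by the claim, so ${\sf cdeg}_{G^+}(χ(t))=1={\sf deg}_T(t)$ and $χ(t)$ is not a separator. Together with Lemma~\ref{@manipulationen} and injectivity of $t\mapsto χ(t)$, this establishes the bijection and the degree equality.

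It remains to handle a maximal clique $K$ of $G^+$ with $χ(t)\subseteq V(K)$. By Lemma~\ref{@nichtbestehens} there is an adjacent pair $u,v\in V(T)$ with $V(K)\subseteq χ(u)\cup χ(v)$; since $χ(u)\cup χ(v)$ induces a clique in $G^+$ and $K$ is maximal, $V(K)=χ(u)\cup χ(v)$. If $t\notin\{u,v\}$ then $u,v$ lie in one component $T_j$ of $T-t$ with neighbour $s_j$ of $t$ in $T_j$, and every $z\in χ(t)\subseteq χ(u)\cup χ(v)$ has a trace meeting $T_j$ and containing $t$, hence containing $s_j$; then $χ(t)\subseteq χ(s_j)$, so by extremeness $χ(t)=χ(s_j)$ and $t=s_j$, a contradiction. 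Hence $t\in\{u,v\}$ and $V(K)=χ(t)\cup χ(t')$ with $t'$ adjacent to $t$. The main obstacle is the connectivity claim, specifically the observation that extremeness leaves a vertex private to $A$ in each bag along the branch; everything else is bookkeeping with traces.
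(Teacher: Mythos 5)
Your proof is correct and follows the same overall strategy as the paper: Lemma~\ref{@manipulationen} for one direction of the bijection, extremeness plus connectedness of the trace regions in $G^{+}$ for the converse and for the degree equality, and Lemma~\ref{@nichtbestehens} for the maximal-clique statement. You make explicit two steps that the paper compresses into single clauses, namely the verification that each region $A_i$ is connected in $G^{+}$ (the paper only invokes that $G^{+}$ is the $(T,\chi)$-completion) and the argument that $t\in\{u,v\}$ in the maximal-clique part.
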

\begin{proof} Because of \autoref{@manipulationen}, for every minimal separator $S \subseteq V(G^{+})$ of $G^{+},$ there is a node $t \in V(T),$ such that $S = χ(t),$ where clearly $t$ is an internal node of $T.$ Now, observe that, since every pair of different bags of $(T, χ)$ is not a subset of one another, for any internal node $t \in V(T),$ there exists a pair of vertices whose trace in $T$ belongs in different subtrees of $T - t.$ Then, because of \autoref{@metaphysicians}, every internal bag of $(T, χ)$ is a separator of $G^{+},$ and because of \autoref{@manipulationen}, it also has to be minimal. Also, since no two bags of $(T, χ)$ are equal, this bijection is unique. Then, let $S \subseteq V(G^{+})$ be a minimal separator of $G^{+}$ and $t \in V(T)$ be the unique internal node of $T$ such that $S = χ(t).$ Because of \autoref{@nichtbestehens}, for any maximal clique $K \subseteq G^{+},$ such that $S \subseteq V(K),$ there must be a node $t' \in V(T),$ adjacent to $t,$ such that $V(K) = χ(t) \cup χ(t').$ Additionally, ${\sf deg}_{T}(t) \leq {\sf cdeg}_{G^{+}}(S),$ since each connected component of $G^{+} - S$ is contained in some subtree of $T - t.$ Also ${\sf cdeg}_{G^{+}}(S) \leq {\sf deg}_{T}(t),$ since each of these subtrees must induce a connected graph since $G^{+}$ is the $(T, χ)$-completion of $G.$
\end{proof}

\paragraph{Amalgamations of lenient tree decompositions.}
Our min-max theorem (\autoref{@comprehension}) is using the technique of  Bellenbaum and Diestel~\cite{BellenbaumD02twosh} for proving the equivalence between the bramble number (that is a max-min parameter) and treewidth (that is a min-max parameter).  An important ingredient of the proof of Bellenbaum and Diestel~\cite{BellenbaumD02twosh} is the  concept of  {\sl amalgamating tree decompositions}. We next adapt it to lenient tree decompositions.

Let $(T, χ)$ be a lenient tree decomposition of a graph $G.$ Let $S \subseteq V(G),$ $C \in {\sf Acc}(G, S)$ and $C^{-} = C - S.$ Let $s \in V(T)$ be a node of $(T, χ)$ and for every $x \in S,$ let $Z_{x} \subseteq T$ be an $s\text{-}{\sf Trace}(x)$ path in $T.$ Then we define $(T,χ')$ so that for every  $t \in V(T)$ we set,
\begin{align*}
χ'(t) =  (χ(t) \cap V(C^{-})) \cup \{ x \in S \mid t \in V(Z_{x})\}.
\end{align*}

It is easy to observe that $(T, χ')$ is a lenient tree decomposition of $C,$ where we force $S\subseteq χ'(s)$ while we fix condition \ref{@unpretentious} of the definition with the necessary addition of vertices to every bag of $(T, χ).$ We say that $(T, χ')$ is an {\em amalgamated restriction of $(T, χ)$ on $C$ with respect to $s$}.

Let $G$ be a graph and $S \subseteq V(G).$ Let ${\cal T} = \{ (T_{C}, χ_{C}) \mid C \in {\sf Acc}(G, S))\}$ be a family, where for each $C \in {\sf Acc}(G, S),$ $(T_{C}, χ_{C})$ is a lenient tree decomposition of $C$ with a node $s_{C} \in V(T_{C})$ where $S \subseteq χ_{C}(s_{C}).$ We build from ${\cal T},$ a lenient tree decomposition $(T, χ)$ of $G$ as follows. $T$ is obtained by $\bigcup_{C \in {\sf Acc}(G,C)}T_{C}$ after adding a new node $t_{\rm new}$ and for every $(T_{C}, χ_{C}) \in {\cal T},$ making $t_{\rm new}$ adjacent with $s_{C}.$ We finally define,
\begin{align*}
&χ'=\{(t_{\rm new},S)\}\cup \bigcup_{C\in{\sf Acc}(G,C)}χ_{C}.
\end{align*}
Observe that $(T, χ)$ is a lenient tree decomposition of $G.$ We call it the $S$-{\em amalgamation} of ${\cal T}.$ By the above construction we observe the following.

\begin{observation}\label{@desapasionado}
Let $k \in {\Bbb N}.$ If every $(T_{C}, χ_{C}) \in {\cal T}$ has width at most $k,$ then the $S$-amalgamation of ${\cal T}$ also has width at most $k.$
\end{observation}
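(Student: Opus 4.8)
The statement is an immediate consequence of the explicit construction of the $S$-amalgamation, so the plan is simply to unravel that construction and read off the maximum bag size. First I would recall that the $S$-amalgamation $(T,χ')$ has as its tree the disjoint union $\bigcup_{C\in{\sf Acc}(G,S)}T_{C}$ with one extra node $t_{\rm new}$ made adjacent to each $s_{C}$, and that its bag function is given by $χ'(t_{\rm new})=S$ together with $χ'(t)=χ_{C}(t)$ for every $C\in{\sf Acc}(G,S)$ and every $t\in V(T_{C})$. Hence the bags of $(T,χ')$ are exactly the set $S$ and all the bags of the decompositions belonging to ${\cal T}$.

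Next I would bound the sizes of these bags. Every bag of the form $χ'(t)=χ_{C}(t)$ with $t\in V(T_{C})$ has at most $k$ vertices, since $(T_{C},χ_{C})$ has width at most $k$ by hypothesis. For the bag attached to the new node, recall that $S\subseteq χ_{C}(s_{C})$ for each $C\in{\sf Acc}(G,S)$; choosing any such $C$ gives $|S|\le|χ_{C}(s_{C})|\le k$. As the width of a lenient tree decomposition is its maximum bag size, this shows that $(T,χ')$ has width at most $k$, which is the claim.

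There is essentially no obstacle in this argument; it is a bookkeeping verification that the construction does not create oversized bags. The only point worth a remark is the degenerate case ${\sf Acc}(G,S)=\emptyset$, i.e. $S=V(G)$, in which ${\cal T}$ is empty and the amalgamation is the single-node decomposition with bag $S$: there the hypothesis is vacuous and the conclusion reduces to the requirement $|S|\le k$, which is precisely the regime in which the observation is ever invoked. Whenever ${\cal T}\neq\emptyset$, the two steps above go through verbatim.
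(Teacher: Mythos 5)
Your proof is correct and is essentially the same argument the paper has in mind (the paper labels it an Observation and gives no proof beyond noting it follows from the construction). You simply unravel the construction, bound each bag, and take care of the degenerate case; no further comparison is needed.
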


Following the ideas of~\cite{BellenbaumD02twosh}, we prove the following lemma.

\begin{lemma}\label{@dilaceraciones} Let $G$ be a graph, $S\subseteq V(G)$ and let $C\in{\sf Acc}(G,S).$ Also let $(T, χ)$ be a lenient tree decomposition of $G,$ $s\in V(T),$ and $(T',χ')$ be the amalgamated restriction of $(T,χ)$ on $C$ with respect to $s.$ Also, suppose that $G - (V(C) \setminus S)$ contains a set $\{ P_{x} \mid x \in S\}$ of disjoint $S\text{-}χ(s)$ paths where $x$ is a terminal vertex of $P_{x}.$ Then for every node $t\in V(T),$ $|χ'(t)| \leq |χ(t)|.$
\end{lemma}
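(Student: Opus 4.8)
The plan is to fix an arbitrary node $t\in V(T)$ (recall that the amalgamated restriction lives on the same tree, so $T'=T$) and to prove $|χ'(t)|\le|χ(t)|$ by a local counting argument, splitting both bags according to membership in $V(C^{-}),$ where $C^{-}=C-S.$ By the very definition of $χ',$ we have $χ'(t)=\big(χ(t)\cap V(C^{-})\big)\cup B_t$ with $B_t=\{x\in S\mid t\in V(Z_x)\},$ and this is a disjoint union since $S\cap V(C^{-})=\emptyset;$ likewise $χ(t)$ is the disjoint union of $χ(t)\cap V(C^{-})$ and $χ(t)\setminus V(C^{-}).$ Thus it suffices to establish $|B_t|\le|χ(t)\setminus V(C^{-})|,$ which I would do by constructing an injection $B_t\hookrightarrow χ(t)\setminus V(C^{-})$ out of the prescribed family of disjoint paths.

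First, for each $x\in S$ I would look at $U_x=\bigcup_{u\in V(P_x)}{\sf Trace}(u),$ the union of the traces in $(T,χ)$ of the vertices of $P_x.$ Exactly as in the proof of \autoref{@metaphysicians}, conditions \ref{@calamitously} and \ref{@unpretentious} force $U_x$ to be connected in $T,$ i.e.\ a subtree. Since $P_x$ is an $S$-$χ(s)$ path having $x$ as a terminal vertex, it contains $x$ (hence ${\sf Trace}(x)\subseteq U_x$) and it contains a vertex $y\in χ(s)$ (hence $s\in{\sf Trace}(y)\subseteq U_x$). Now $Z_x$ is, by construction, a path of $T$ from $s$ to some node $w\in{\sf Trace}(x);$ as $T$ is a tree this is the unique $s$-$w$ path of $T,$ and since the subtree $U_x$ contains both $s$ and $w$ it contains this path, so $Z_x\subseteq U_x.$ (Note this also makes the choice of $Z_x$ among the $s$-${\sf Trace}(x)$ paths immaterial.)

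Then, for the fixed $t,$ take any $x\in B_t,$ so $t\in V(Z_x)\subseteq U_x,$ which means $t\in{\sf Trace}(u_x)$ for some $u_x\in V(P_x),$ i.e.\ $u_x\in χ(t).$ Because $P_x$ lies in $G-(V(C)\setminus S)=G-V(C^{-}),$ the vertex $u_x$ avoids $V(C^{-}),$ so $u_x\in χ(t)\setminus V(C^{-}).$ Finally, the paths $\{P_x\mid x\in S\}$ are pairwise disjoint, so distinct $x,x'\in B_t$ yield $u_x\ne u_{x'};$ hence $x\mapsto u_x$ is the desired injection $B_t\hookrightarrow χ(t)\setminus V(C^{-}),$ giving $|B_t|\le|χ(t)\setminus V(C^{-})|$ and therefore $|χ'(t)|\le|χ(t)|.$

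I expect the only genuinely delicate point to be the containment $Z_x\subseteq U_x,$ on which the whole argument hinges; it is secured by two facts — that $U_x$ is a subtree (the standard trace-connectivity argument reused from \autoref{@metaphysicians}) and that paths between two fixed vertices of a tree are unique. Everything else is bookkeeping with the disjointness of the $P_x$'s, and, pleasantly, no case analysis on $t$ (leaf vs.\ internal, close to $s$ or not) or on the shape of $T$ is required.
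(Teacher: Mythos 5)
Your proof is correct and rests on the same core idea as the paper's: for each $x \in S$ that the amalgamation inserts into $χ'(t),$ locate a distinct witness vertex of $P_x$ in $χ(t)$ outside $V(C^{-}),$ using the connectivity of the union of traces along $P_x$ and the pairwise disjointness of the paths. The only cosmetic difference is that the paper reaches the witness by splitting cases on whether $x$ and the other endpoint of $P_x$ are adjacent (invoking condition~\ref{@calamitously} or \autoref{@metaphysicians} accordingly), whereas you avoid the case split via the single observation $Z_x \subseteq U_x,$ which is a mild streamlining rather than a different argument.
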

\begin{proof}
Let $t \in V(T)$ be a node such that there is an $x \in χ'(t) \setminus χ(t).$ By definition of $χ'(t),$ we have that $x \in S.$ Let $Z_{x} \subseteq T$ be an $s\text{-}{\sf Trace}(x)$ path in $T.$ By definition of $χ'(t),$ we have that $t \in V(Z_{x}).$ Since $x \notin χ(t),$ then $t \notin {\sf Trace}(x),$ which implies that  $Z_{x}$ has length at least $1.$ Moreover, if $t = s,$ then trivially $χ(s)$ contains the terminal of $P_{x}$ that belongs in $χ(s).$ Thus, we can assume that $t$ is an internal node of $T.$ Let $y$ be the terminal vertex of $P_{x}$ in $χ(s).$ Note that, if $P_{x}$ has length $1,$ then $x, y$ are adjacent and by condition \ref{@calamitously}, there exists a pair of close nodes $u, v \in V(T)$ where $\{ x, y\} \subseteq χ(u) \cup χ(v).$ By condition \ref{@unpretentious}, we can assume that $u, v \in V(Z_{x})$ and thus $Z_{x} \subseteq {\sf Trace}(x) \cup {\sf Trace}(y),$ which, since $x \notin χ(t),$ implies that $y \in χ(t).$ Now, assume that, $P_{x}$ has length at least $2$ and that $x, y$ are not adjacent. Then, because of \autoref{@metaphysicians}, $χ(t)$ is an $(x,y)$-separator, and hence contains some other distinct vertex of $P_{x}.$ Note that under the assumption that $x \in χ'(t) \setminus χ(t),$ any such vertex is not contained in $χ'(t),$ since $χ'(t) \subseteq V(C)$ while $V(P_{x}) \setminus \{ x \} \subseteq V(G) \setminus V(C).$ Thus $|χ'(t)| \leq |χ(t)|.$
\end{proof}


\section{$k$-domino-trees}
\label{@monosyllabic}

Recall that $k$-trees serve as the edge-extremal graphs of graphs of bounded bramble number (via the equivalence with treewidth). In this section we define the concept of a {\em $k$-domino-tree} that 
is the corresponding extremal structure for the strict brambles. As we will see, this notion is more entangled than $k$-trees. 

Let $G$ be a chordal graph. We call a maximal clique of $G,$ {\em external} (respectively {\em internal}), if its vertex set contains at most one (respectively at least two) minimal separator(s) of $G.$ We say that all external maximal cliques containing the same minimal separator $S,$ form an {\em external family of} $S,$ and we denote it by ${\cal K}_{G}(S).$ For each   $K \in {\cal K}_{G}(S),$ we define its {\em valiancy} to be ${\sf val}(K) = |V(K) \setminus S|,$ i.e. the number of private vertices of $K.$ We call a minimal separator $S,$ {\em external} (respectively {\em internal}), if ${\cal K}_{G}(S) \neq \emptyset$ (respectively ${\cal K}_{G}(S)= \emptyset$).

\smallskip
\noindent
Let $k \in \mathbb{N}.$ A graph $G$ is a $k$-{\em domino-tree} if it is either $K_{r}$ for some $r\leq k,$ or it satisfies the following properties:
\begin{enumerate}[label={\it \roman*}.]
	\setlength\itemsep{-2px}
	\item\label{@unswervingly} $G$ is chordal;
	\item\label{@kennzeichnet} Every minimal separator of $G$ has size $k$;
	\item\label{@correspondiente} Every maximal clique of $G$ has size in  $[k+1, 2k]$;
	\item\label{@begriffswort} The vertex set of every maximal clique of $G$ contains at most two minimal separators;
	\item\label{@precapitalist} The vertex set of every maximal clique of $G$ that contains exactly two minimal separators $S, S'$ is equal to $S \cup S'$;
	\item\label{@mandamientos} Every internal minimal separator of $G$ of connectivity-degree two, is not contained in the union of two other minimal separators;
	\item\label{@presentiment} For every external minimal separator $S$ of connectivity-degree  two, the union of the vertex sets of the  maximal cliques that contain $S,$ has size greater than $2k$;
	\item\label{@bezeichnenden} For every external minimal separator $S,$ with $|{\cal K}_{G}(S)| > 1,$ for any different pair $K, K' \in {\cal K}_{S},$ ${\sf val}(K) + {\sf val}(K') > k.$
\end{enumerate}
A graph $G$ is a {\em partial $k$-domino-tree} if it is  a spanning subgraph of a $k$-domino-tree.
\smallskip

\begin{figure}[t]
\begin{center}
\graphicspath{{./Figures/}}
\scalebox{1}{\includegraphics{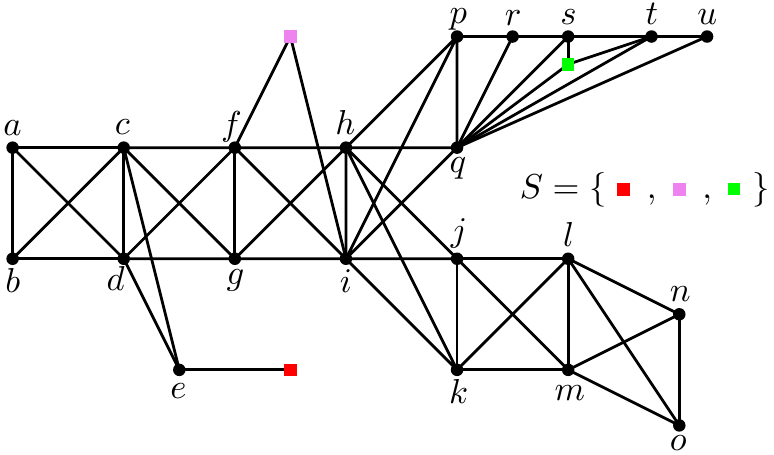}}
\end{center}
\caption{A graph $G$ with a set $S$ of three square vertices, such that $G'=G\setminus S$ is  a $2$-domino-tree. The neighbor of the \red{red} square vertex is a cut-vertex, which violates property \ref{@kennzeichnet}. Also, the $K_{2}$ incident to the \red{red} square, violates property \ref{@correspondiente}. The \violet{violet} square vertex cannot exist, as otherwise the $K_{3}$ induced by it and its neighbors is separated by $\{i,f\},$ which is contained in the maximal clique induced by $\{f,g,h,i\}$ which contains three minimal separators. This violates property \ref{@begriffswort}. The maximal clique induced by the \green{green} square vertex and its neighbors contains two minimal separators that do not cover the \green{green} square vertex. This violates property \ref{@precapitalist}.  Consider now the graph $G'.$ If we remove from $G'$ the edge $\{c,d\},$ the graph will no longer be chordal, which violates property \ref{@unswervingly}. If we remove the {edge $\{k,l\}$}, the set $\{j,m\}$ becomes an internal minimal separator of connectivity-degree two, that is contained in the union of the minimal separators $\{j,k\}$ and $\{l,m\},$ which violates property \ref{@mandamientos}. If we remove the edge $\{l,o\},$ the set $\{m,n\}$ becomes an external minimal separator of connectivity-degree two, where the union of the vertex sets of the maximal cliques induced by $\{l,m,n\}$ and $\{m,n,o\},$ has size four, which violates \ref{@presentiment}. If we remove edge $\{a,b\},$ the external minimal separator $\{c,d\}$ belongs to the external maximal cliques induced by $\{a,c,d\}$ and $\{b,c,d\}$ and the sum of the valiances of these two cliques is two, which violates property \ref{@bezeichnenden}.}
\label{domino_expl}
\end{figure}

See \autoref{domino_expl} for an example of the above definition. We proceed with a few remarks. By \ref{@unswervingly}, in a $k$-domino-tree, every minimal separator $S$ is contained in the vertex set of a maximal clique. Moreover, it is easy to observe that, each augmented connected component in ${\sf Acc}(G, S),$ contains exactly one maximal clique whose vertex set contains $S.$ Therefore, a minimal separator of connectivity-degree $d$ is contained in exactly $d$ different maximal cliques. Also for any $K \in {\cal K}_{G}(S),$ ${\sf val}(K) \in [1, k].$ Moreover, for  different pairs  $K, K' \in {\cal K}_{G}(S),$ $V(K) \cap V(K') = S,$ so ${\sf val}(K) + {\sf val}(K')$ does not double count vertices. Also, if $S$ is external, the connectivity-degree of $S$ is at least $|{\cal K}_{G}(S)|.$

The following Lemma will be very useful in our proof of the min-max theorem in \autoref{@comprehension}.
\begin{lemma}\label{@manifestations} Let $G$ be a $k$-domino-tree and $S \subseteq V(G)$ be a minimal separator of $G$ such that there exists $C \in {\sf cc}(G - S)$ with $|V(C)| > k.$ Then there exists a minimal separator $S' \subseteq V(G)$ of $G$ with the following properties:
\begin{enumerate}
	\setlength\itemsep{-2px}
	\item $S'$ is properly contained in $C^{+},$ where $C^{+} = G[V(C) \cup S],$ i.e. $C^{+}$ is the augmented connected component of $G - S,$ corresponding to $C.$
	\item $S$ and $S'$ are not a subset of one another.
	\item The vertices in $S \cup S'$ induce a maximal clique in $G.$
	\item $C - S'$ is a connected component of $G - S'.$
\end{enumerate}
\end{lemma}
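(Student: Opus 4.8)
The plan is to take $S'$ to be the ``second'' minimal separator sitting inside the unique maximal clique of $C^{+}$ that contains $S$; almost all of the work is in isolating that clique and proving such a second separator exists (indeed, is forced), after which Properties~1--4 fall out.

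First I would fix the unique maximal clique $K$ of $G$ with $S\subseteq V(K)\subseteq V(C^{+})$, which exists by property~\ref{@unswervingly} and the remarks following the definition of a $k$-domino-tree. By property~\ref{@correspondiente}, $|V(K)\setminus S| = |V(K)|-k \le k < |V(C)|$, so $V(C)\setminus V(K)\ne\emptyset$ and $C^{+}$ is strictly larger than $K$. Fix a clique tree of the (chordal) graph $C^{+}$ rooted at $K$, and let $L$ be any maximal clique of $C^{+}$ adjacent to $K$ in it, with $N:=V(K)\cap V(L)$; I claim $N$ is a minimal separator of $G$. Pick $x\in V(L)\setminus N$ and $y\in V(K)\setminus N$ (both sets are nonempty since $L\ne K$). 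In $C^{+}$, $N$ separates $x$ from $y$; and in $G$ any $x$--$y$ path that leaves $C^{+}$ must pass through $S$, which, since $S\subseteq V(K)$, lies on the $K$-side of $N$, so such a path still meets $N$. Hence $N$ separates $x$ from $y$ in $G$, and it does so minimally because $x$ and $y$ are adjacent to every vertex of $N$ (they lie in the cliques $L$ and $K$); therefore $N$ is a minimal separator of $G$, and $|N|=k$ by property~\ref{@kennzeichnet}. Since $N\subseteq V(K)$ and $N\ne S$ (otherwise $L$ would be a second maximal clique of $C^{+}$ containing $S$), property~\ref{@begriffswort} forces $V(K)$ to contain exactly the two minimal separators $S$ and $N$; in particular $N$ does not depend on the choice of $L$, so write $S':=N$, note that $K$ is an internal maximal clique, and conclude $V(K)=S\cup S'$ by property~\ref{@precapitalist}.

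Properties~1--3 are then immediate: $S'\subseteq V(K)\subseteq V(C^{+})$ with $|S'|=k<|V(C^{+})|$ gives Property~1; $S$ and $S'$ are distinct of equal size $k$, so neither contains the other, giving Property~2; and $S\cup S'=V(K)$ is a maximal clique, giving Property~3.

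For Property~4, note first that $S'\cap V(C)=V(K)\setminus S$: the inclusion ``$\subseteq$'' is clear, and ``$\supseteq$'' holds because $V(K)\setminus S=S'\setminus S\subseteq S'$ and $V(K)\setminus S\subseteq V(C)$. Hence $V(C)\setminus S'=V(C)\setminus V(K)$, which is nonempty. Since $C$ is a connected component of $G-S$, the only edges of $G-S'$ that could leave $V(C)\setminus S'$ are edges from $V(C)\setminus V(K)$ to $S\setminus S'$, so it suffices to exclude those. But a vertex $s\in S\setminus S'$ lies in no maximal clique of $C^{+}$ other than $K$: in the rooted clique tree, the maximal cliques containing $s$ form a subtree through the root $K$, and it cannot reach a child of $K$ because every tree-edge at $K$ is labelled $S'\not\ni s$. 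Thus $s$ has no neighbour in $V(C)\setminus V(K)$; so $V(C)\setminus S'=V(C)\setminus V(K)$ is closed in $G-S'$, i.e.\ it is a (nonempty) union of connected components of $G-S'$, which is the assertion of Property~4. The delicate part is the second paragraph --- pinning down $K$, seeing that it is internal, and checking that \emph{every} clique-tree edge at $K$ carries the same separator $S'$; the rest is routine verification.
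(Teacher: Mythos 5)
Your proof takes a genuinely different route from the paper's. The paper fixes non-adjacent $x\in S$ and $y\in V(C)$ and lets $S'$ be a minimal $(x,y)$-separator maximizing the size of the augmented component containing $y$, then verifies the four properties through path and separator arguments; you instead work with a clique tree of the chordal graph $C^{+}$, identify the unique maximal clique $K\subseteq C^{+}$ containing $S$, and use property~\ref{@begriffswort} to show that every adhesion set at $K$ equals a single minimal separator $S'$ with $V(K)=S\cup S'$. The two constructions end up producing the same $S'$, and your handling of Properties~1--3 is clean and arguably more transparent than the paper's.

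For Property~4, however, what you actually establish is that $V(C)\setminus S'$ has no edges into the rest of $G-S'$ --- that is, $C-S'$ is a \emph{union} of connected components of $G-S'$ --- and you then declare this to be ``the assertion of Property~4.'' That is not the same claim: Property~4 says $C-S'$ is a \emph{single} connected component. It need not be, because $K$ may have two or more clique-tree neighbours in $C^{+}$, all carrying the same label $S'$ (as you yourself show), in which case $C-S'$ splits into several pieces. Concretely, the chordal graph whose maximal cliques are $\{a,b,g,g'\}$, $\{a,b,c,d\}$, $\{c,d,e,e'\}$, $\{c,d,f,f'\}$ is a $2$-domino-tree (one checks properties~\ref{@unswervingly}--\ref{@bezeichnenden} directly); with $S=\{a,b\}$, the unique candidate forced by Properties~1--3 is $S'=\{c,d\}$, and $C-S'=\{e,e',f,f'\}$ is disconnected in $G-S'$. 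To be fair, the paper's own proof has the same soft spot --- asserting ``$C'_{y}-S'$ is the required connected component'' tacitly assumes only one component of $G-S'$ meets $V(C)$ --- and only the ``union of components'' form is actually needed in the $4\Rightarrow 3$ step of \autoref{@interrelated}. Still, you should not equate the closure statement you proved with Property~4 as literally stated; say explicitly that you obtain the weaker (and, in fact, the correct and sufficient) version.
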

\begin{proof} Since every maximal clique of $G$ has size at most $2k$ and $|V(C)| > k,$ there exists a pair of non adjacent vertices $x \in S,$ $y \in V(C),$ in $G.$ Let $S' \subseteq V(G)$ be a minimal $(x, y)$-separator maximizing the size of $C'_{y} \in {\sf Acc}(G, S'),$ where $C'_{y}$ is the augmented connected component of $G - S',$ such that $y \in V(C'_{y}).$

Property {\em 1.} holds, since there exists a $x\text{-}y$ path with internal vertices in $C,$ which implies that there is a vertex of $C$ in $S',$ which, since $G$ is chordal (property \ref{@unswervingly}), implies that $S' \subseteq V(C^{+}).$

Property {\em 2.} also holds since $x \in S,$ while $ x \notin S'$ and $S'$ contains a vertex of $C$ which clearly cannot be in $S.$


For property {\em 3.} assume that $S \cup S'$ does not induce a clique in $G.$ Then there exists a pair of non adjacent vertices $x' \in S,$ $y' \in S'$ which in turn implies the existence of a minimal $(x', y')$- separator $S'' \subseteq V(G)$ of $G,$ which is also properly contained in $C^{+}.$ Observe that since $x'$ and $y'$ are not adjacent, $x' \notin S',$ which since $S$ is a clique, implies that $S'$ is also an $(x',y)$-separator. Now let $C'_{x'} \in {\sf Acc}(G, S'),$ such that $x' \in V(C'_{x'})$ and $C''_{y} \in {\sf Acc}(G, S''),$ such that $y \in V(C''_{y}).$ Notice that every $x'\text{-}y'$ path has an internal vertex in $C'_{x'}$ and it intersects $S''.$ This implies that $S''$ is completely contained in $C'_{x'}$ and in turn that $y \notin S''.$ Finally observe that there exists a $y\mbox{-}y'$ path with internal vertices only in $C'_{y}$ and $S''$ cannot intersect any such path which implies that $y' \in V(C''_{y}).$ This contradicts the assumption on $S'.$ Also since $S \cup S'$ induces a clique in $G,$ it is contained in a maximal clique of $G$ whose vertex set contains exactly two minimal separators, $S$ and $S'.$ Then $S \cup S'$ induces the entire maximal clique (property \ref{@precapitalist}).


For property {\em 4.}, let $C'_{x} \in {\sf Acc}(G, S'),$ such that $x \in V(C'_{x}).$ It suffices to prove that $(V(C'_{x}) \setminus S') \cap V(C)$ is empty. Then $C'_{y} - S'$ is the required connected component. Assume that there exists a vertex $z \in (V(C'_{x}) \setminus S') \cap V(C).$ Since $S \cup S'$ induces a maximal clique in $G,$ there exists a vertex $w \in S \cup S',$ such that $\{ w, z \} \notin E(G).$ Let $S''$ be a minimal $(w, z)$-separator in $G,$ such that $S'' \subseteq S \cup S'.$ Clearly such a separator exists. But then $S \cup S'$ contains at least three minimal separators which contradicts property \ref{@begriffswort}.
\end{proof}

\begin{figure}[t]
\begin{center}
\graphicspath{{./Figures/}}
\scalebox{0.8}{\includegraphics{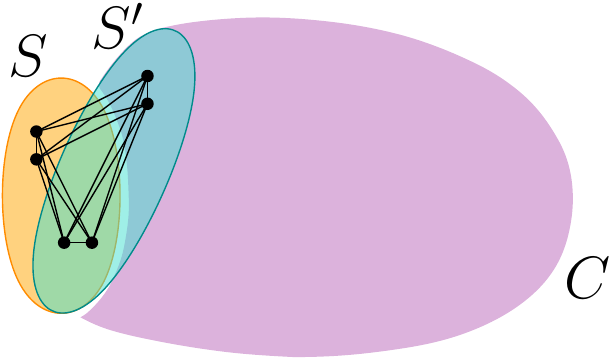}}
\end{center}
\caption{The connected component $C,$ and the minimal separators $S$ and $S'$ of \autoref{@manifestations}.}
\label{@unrecognized}
\end{figure}

\section{The min-max equivalence Theorem}
\label{@comprehension}

In this section we prove the main result of this paper. We prove equivalent min-max formalizations for the strict bramble number in terms of the lexicographic tree product number, the minimum width of lenient tree decompositions and subgraph containment in extremal structures. Our main result is the following.



\begin{theorem}\label{@interrelated}
Let $G$ be a graph and $k\in\Bbb{N}.$ The following statements are equivalent.  
\begin{enumerate}
	\setlength\itemsep{-2px}
	\item There is a tree $T$ such that $G$ is a minor of $T\cdot K_{k}.$
	\item $G$ has a lenient tree decomposition of width at most $k.$
	\item $G$ has no strict bramble of order greater than $k.$
	\item  $G$ is a partial $k$-domino-tree.
\end{enumerate}
\end{theorem}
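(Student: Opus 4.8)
The plan is to prove the cycle of implications $1 \Rightarrow 2 \Rightarrow 3 \Rightarrow 4 \Rightarrow 1$, with the genuinely hard work concentrated in $3 \Rightarrow 4$ (building an extremal structure from the absence of large strict brambles), exactly mirroring the Bellenbaum--Diestel treatment of the treewidth/bramble duality that the authors have already flagged. For $1 \Rightarrow 2$: given a tree $T$ and a minor model of $G$ in $T\cdot K_k$, I would use the tree $T$ itself, with the bag $\chi(t)$ being the set of vertices $v$ of $G$ whose branch set $\mu(v)$ meets the fiber $\{t\}\times V(K_k)$; condition \ref{@predominantly} is immediate, \ref{@unpretentious} follows because each $\mu(v)$ is connected in $T\cdot K_k$ and its projection to $T$ is therefore connected, and \ref{@calamitously} uses that an edge of $T\cdot K_k$ either stays inside one fiber or joins two fibers over adjacent (i.e. close) nodes of $T$; the width bound is clear since each fiber has only $k$ vertices. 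For $2 \Rightarrow 3$: I would argue the contrapositive via the separator machinery already in place. Given a lenient tree decomposition $(T,\chi)$ of width $\le k$ and a strict bramble $\cal B$, I would show some bag covers $\cal B$: orient each node $t$ toward the unique subtree of $T-t$ that contains the trace of every member of $\cal B$ not met by $\chi(t)$ --- this is well-defined by \autoref{@metaphysicians} together with connectivity of bramble sets and \autoref{@deliberately} (all traces of a connected set that avoids $\chi(t)$ land in the same component, and two different such components would force two sets of $\cal B$ not to intersect) --- and then a sink of this orientation gives a bag meeting every set of $\cal B$, so $\mathsf{order}({\cal B})\le k$.

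The step $3 \Rightarrow 4$ is where the real content lies, and I expect it to be the main obstacle. The strategy is: assume $G$ has no strict bramble of order greater than $k$ and build a $k$-domino-tree $G^+ \supseteq G$ on the same vertex set. I would proceed by induction on $|V(G)|$ after first reducing to the case where $G$ is connected and has more than $k$ vertices (otherwise complete $G$ to $K_{\max(|V(G)|,\, \text{something})}$ or handle it directly). The key is to find a good separator $S$ of size exactly $k$: if no such $S$ existed, one could manufacture a strict bramble of order $>k$ from the augmented connected components --- roughly, take for each "piece" a family of connected sets that all pass through the small separators, and use Menger-type connectivity (as in \autoref{@saupoudrerez} and \autoref{@manipulationen}) to certify that no $k$-set covers them. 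Having found such an $S$, recurse on each augmented component $C\in{\sf Acc}(G,S)$ (with $S$ made into a clique), obtaining $k$-domino-trees $C^+$, and then glue them along $S$; the bulk of the proof is the bookkeeping needed to verify properties \ref{@unswervingly}--\ref{@bezeichnenden} survive the gluing --- in particular the delicate conditions \ref{@mandamientos}, \ref{@presentiment}, \ref{@bezeichnenden} about internal/external separators of connectivity-degree two, which will require choosing $S$ carefully (e.g. choosing it to be separator-minimal in a suitable sense, as in the proof of \autoref{@manifestations}) and possibly adjusting a few edges. I would lean heavily on \autoref{@manifestations} to locate the nested separator structure inside an oversized component. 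The amalgamation apparatus (\autoref{@desapasionado}, \autoref{@dilaceraciones}) is the lenient-tree-decomposition analogue of this gluing, so an alternative route for $3\Rightarrow 4$ is to go $3 \Rightarrow 2$ first (dual direction) and then $2 \Rightarrow 4$; I would likely do $3 \Rightarrow 2$ by the Bellenbaum--Diestel induction --- pick a strict bramble $\cal B$ of maximum order, a minimal cover $S$ of it with $|S|=\mathsf{sbn}(G)\le k$, recursively decompose each $C\in{\sf Acc}(G,S)$, force $S$ into one bag of each, and $S$-amalgamate, using \autoref{@dilaceraciones} to keep the width $\le k$ --- and then extract a $k$-domino-tree from an extreme such decomposition via \autoref{@comprehensibility}, \autoref{@ridiculously}, \autoref{@perspiration}: the $(T,\chi)$-completion $G^+$ is chordal, its minimal separators are exactly the internal bags (all of size $k$), its maximal cliques are unions of close bags (size in $[k+1,2k]$), and the four "extreme" conditions translate directly into \ref{@mandamientos}--\ref{@bezeichnenden}.

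Finally, for $4 \Rightarrow 1$: given that $G$ is a spanning subgraph of a $k$-domino-tree $H$, it suffices (since minors of $T\cdot K_k$ are closed under subgraphs) to realize $H$ as a minor of $T\cdot K_k$ for a suitable tree $T$. I would build $T$ from the "clique tree" / separator structure of the chordal graph $H$: take one node of $T$ per maximal clique, adjacency along shared minimal separators, and then define a minor model by placing each vertex of $H$ into the fiber over the nodes whose cliques contain it, distributing vertices among the $k$ clique-coordinates so that a separator $S$ of size $k$ occupies all $k$ coordinates in its node and the private vertices of the two sides reuse those coordinates on the two sides --- property \ref{@precapitalist} (an internal maximal clique with two separators is exactly their union, so has size $\le 2k$) and property \ref{@correspondiente} are precisely what make a clique of size up to $2k$ fit across one edge of $T$ (two fibers of size $k$), while \ref{@bezeichnenden}, \ref{@presentiment} ensure the external cliques at a separator can be folded onto a path attached at that node without exceeding width $k$; the connectivity and edge conditions of a minor model then follow from the running intersection property of the clique tree. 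This direction is mostly a careful but routine construction; the serious obstacle remains the duality step $3\Rightarrow 4$ (equivalently $3\Rightarrow 2$), and that is where I would spend most of the effort.
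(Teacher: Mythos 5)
Your proposal is correct in outline but closes the equivalence by a genuinely different chain than the paper does. The paper establishes $1 \Leftrightarrow 2$ and then the cycle $2 \Rightarrow 4 \Rightarrow 3 \Rightarrow 2$, whereas you propose $1 \Rightarrow 2 \Rightarrow 3 \Rightarrow 4 \Rightarrow 1$ with $3 \Rightarrow 4$ factoring through $3 \Rightarrow 2 \Rightarrow 4$. On the shared core — $3 \Rightarrow 2$ by a Bellenbaum--Diestel induction with $S$-amalgamations and \autoref{@dilaceraciones}, and $2 \Rightarrow 4$ by taking the $(T,\chi)$-completion of an extreme lenient tree decomposition and reading off properties \ref{@unswervingly}--\ref{@bezeichnenden} from \autoref{@comprehensibility}, \autoref{@ridiculously}, \autoref{@saupoudrerez}, \autoref{@manipulationen}, \autoref{@perspiration} — you describe exactly what the paper does, and your "route (b)'' is the right choice; your "route (a)'' (find a size-$k$ separator directly and glue recursive domino-trees) is much riskier and you are right to be wary of it, since locating such a separator directly from the absence of large strict brambles essentially forces you to reprove the Bellenbaum--Diestel induction anyway. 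Where you diverge from the paper is instructive: you give a standalone $2 \Rightarrow 3$ by the standard node-orientation/sink argument (orient each non-covering node toward the component housing the traces of the avoiding bramble sets, show adjacent nodes cannot point to each other, find an unoriented node), which is more elementary than the paper's $4 \Rightarrow 3$, and in particular makes the structural Lemma \autoref{@manifestations} unnecessary for the equivalence theorem itself. In exchange you must supply a direct $4 \Rightarrow 1$, which the paper never needs; your clique-tree construction for chordal $H$ (one node per minimal separator and per external clique, coordinates assigned so each size-$k$ separator fills a fiber, private vertices of external cliques packed into leaf fibers) does work, with properties \ref{@correspondiente}, \ref{@begriffswort}, \ref{@precapitalist} guaranteeing that each maximal clique fits across one edge of the tree, but writing out the coordinate bookkeeping carefully is nontrivial extra work that the paper's choice of implications avoids. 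Also a small imprecision: in your $3 \Rightarrow 2$ sketch you say "pick a strict bramble $\cal B$ of maximum order''; the correct Bellenbaum--Diestel induction is a backwards induction on $|\cal B|$ (the number of sets), proving $\cal B$-admissibility for all strict brambles simultaneously — the order plays a role only through the size of the minimal cover used as the amalgamation separator. That is a phrasing issue rather than a gap; your plan is sound.
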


\begin{proof} ({\em 2}$\ \Rightarrow\ ${\em 1}\,). Let $(T, χ)$ be a lenient tree decomposition of $G$ of width $k.$ Because of \autoref{@comprehensibility}, we may assume that $(T, χ)$ is also extreme. Let $G^{+} = T \cdot K_{k}.$ It is enough to prove that $G \leq_{\sf m} G^{+}.$ Let $μ: V(G)\to V(G^+)$ be a mapping such that for each $v\in V(G),$ $μ(v)$ contains exactly one vertex from each clique corresponding to the vertices of ${\sf Trace}(v).$ Observe that since $(T, χ)$ is extreme, these vertices can be selected such that, for different $v,v'\in V(G),$ $μ(v)$ and $μ(v')$ are disjoint. Also for adjacent vertices $u, v \in V(G)$ it is easy to see that they contain a pair of vertices $u', v' \in V(G^{+}),$ from close nodes $t \in {\sf Trace}(u)$ and $t' \in {\sf Trace}(v),$ which are adjacent in $G^{+}.$ In any case $μ$ is a minor model of $G$ in $G^+,$ as required.


({\em 1}$\ \Rightarrow\ ${\em 2}\,). Let $(T,χ)$ be such that $χ$ maps each node of $Τ$ to the corresponding clique in $T \cdot K_{k}.$ Clearly $(T,χ)$ is a lenient tree decomposition of $T \cdot K_{k}$ and the result follows from \autoref{@intersubjective}.


({\em 4}$\ \Rightarrow\ ${\em 3}\,). The claim trivially holds if $G$ has at most $k$ vertices. Let $D$ be a $k$-domino-tree such that $G$ is a spanning subgraph of $D.$ Assume to the contrary that $G$ has a strict bramble ${\cal B} \subseteq 2^{V(G)}$ of order greater than $k.$ ${\cal B}$ is also a strict bramble of the same order in $D.$ Every minimal separator $S \subseteq V(D)$ of $D$ has size $k$ and, as such, there exists an element of ${\cal B}$ which it does not intersect. This element is connected and thus must be completely contained in some connected component $C \in {\sf cc}(D - S).$ Choose $S$ with the smallest such component. If $C$ has size $k$ we are done. Otherwise observe that there exists a separator $S' \subseteq V(D)$ of $D$ with the properties stated at \autoref{@manifestations}. With the same reasoning there exists a connected component $C' \in {\sf cc}(D - S')$ that contains an element of $\cal B.$ Properties {\em 1}, {\em 2} and {\em 4} of \autoref{@manifestations} certify that there exists a connected component of $D - S'$ properly contained in $C.$ $C'$ cannot be this component as this would contradict the choice of $S.$ Property {\em 4} also certifies that the vertex set of any other connected component of $D - S'$ is disjoint from the vertex set of $C.$ This contradicts the intersecting properties of ${\cal B}.$

({\em 2}$\ \Rightarrow\ ${\em 4}\,). The claim trivially holds if $G$ has at most $k$ vertices. Let $(T, χ)$ be a lenient tree decomposition of $G$ of width $k.$ As before, because of \autoref{@comprehensibility}, we may assume that $(T, χ)$ is extreme. Let $D$ be the $(T, χ)$-completion of $G.$ Clearly $D$ is a spanning supergraph of $G.$ We argue that $D$ is a $k$-domino-tree. {\em Property \ref{@unswervingly}} is obtained directly from \autoref{@ridiculously}. {\em Property \ref{@kennzeichnet}} is implied from \autoref{@manipulationen} since all bags of $(T, χ)$ have size $k.$ For {\em Property \ref{@correspondiente}}, because of \autoref{@nichtbestehens}, for every maximal clique $K$ of $D,$ there is an adjacent pair of nodes $t, t' \in V(T)$ such that $V(K) \subseteq χ(t) \cup χ(t').$ This immediately implies that $|V(K)| \leq 2k.$ Also observe that, since $χ(t)$ and $χ(t')$ are not a subset of one another, $|V(K)| \geq k+1.$ {\em Property \ref{@begriffswort}} is implied from \autoref{@saupoudrerez}, since if $V(K)$ contains a minimal separator of $D,$ then it is either $χ(t)$ or $χ(t').$ Also, {\em Property \ref{@precapitalist}} holds since, $χ(t) \cup χ(t') = V(K).$ Now, consider a minimal separator $S \subseteq V(D)$ of ${\sf cdeg}_{D}(S) = 2.$ Then, by \autoref{@perspiration}, let $t \in V(T)$ be the corresponding node of ${\sf deg}_{T}(t) = 2.$ Since $(T, χ)$ is extreme, for any pair of nodes $t', t'' \in V(T)$ such that $t \in V(P),$ where $P$ is the unique $t'\text{-}t''$ path in $T,$ we have that $χ(t) \nsubseteq χ(t') \cup χ(t'').$ This easily implies {\em property \ref{@mandamientos}} and {\em property \ref{@presentiment}} Additionally, if $S$ is external, $t$ has exactly $|{\cal K}_{G}(S)|$ leaf neighbours in $T.$ Moreover, if $|{\cal K}_{G}(S)| > 1,$ let $t', t'' \in V(T)$ be two different leaf neighbours of $t.$ We know that, $|{\sf Petal}(t') \cup {\sf Petal}(t'')| > k.$ Now observe that, because of \autoref{@perspiration}, $χ(t) \cup χ(t')$ and $χ(t) \cup χ(t'')$ correspond to two maximal cliques $K, K' \in {\cal K}_{G}(S)$ and that ${\sf Petal}(t') = {\sf val}(K)$ while ${\sf Petal}(t'') = {\sf val}(K').$ Thus {\em property \ref{@bezeichnenden}} is also satisfied.

{\em 3}$\ \Rightarrow\ ${\em 2}\,). This part of the proof uses the ideas of  the corresponding proof in~\cite{BellenbaumD02twosh}.
Assume that $G$ has no strict bramble of order greater than $k.$ We show that for every strict bramble ${\cal B} \subseteq 2^{V(G)}$ of $G,$ there is a lenient tree decomposition of $G$ such that, if the size of a bag 
is greater than $k$ then this bag does not cover $\mathcal{B}.$ We call such a lenient tree decomposition ${\cal B}$-{\em admissible}. Observe that for $\mathcal{S} = \emptyset,$ the width of a ${\cal B}$-admissible lenient tree decomposition of $G$ is at most $k,$ since trivially any set covers an empty strict bramble. To prove that $G$ admits a ${\cal B}$-admissible lenient tree decomposition, we will prove instead that every $C \in {\sf Acc}(G, S),$ admits a ${\cal B}$-admissible lenient tree decomposition $(T_{C}, χ_{C}),$ with a node $s \in V(T_{C}),$ such that $χ_{C}(s) = S.$ Then the $S$-amalgamation of $\{ (T_{C}, χ_{C}) \mid C \in {\sf Acc}(G, S)\}$ is clearly ${\cal B}$-admissible.

Let ${\cal B} \subseteq 2^{V(G)}$ be a strict bramble and let $S \subseteq V(G)$ be a minimal cover of ${\cal B}.$ Then ${\sf order}({\cal B}) = |S| \leq k.$ If $S = V(G)$ then the lenient tree decomposition with $S$ as the only bag, is ${\cal B}$-admissible. Assume otherwise. We proceed with a backwards induction on the size $|\mathcal{B}|.$

For the base case assume that any ${\cal B'} \subseteq 2^{V(G)}$ such that $|{\cal B'}| > |{\cal B}|$ is not a strict bramble, i.e., ${\cal B}$ is a strict bramble with a maximum number of elements. We can safely assume this since $|{\cal B}| < 2^{V(G)}.$ This holds since ${\cal B}$ cannot contain a set and its complement. Let $C \in {\sf cc}(G - S),$ $C^{+} = G[V(C) \cup S]$ and let ${\cal B'} = {\cal B} \cup \{ C \}.$ Since ${\cal B'}$ is not a strict bramble it is implied that $C$ does not cover $\cal B.$ Then the pair $(T, χ)$ where $T$ consists of two adjacent nodes $t, t' \in V(T)$ while $χ(t) = S$ and $χ(t') = C$ is a lenient tree decomposition of $C^{+}$ with the required properties.

Now assume inductively that for any strict bramble ${\cal B'} \subseteq 2^{V(G)}$ such that $|{\cal B'}| > |{\cal B}|,$ there is a ${\cal B'}$-admissible lenient tree decomposition of $G.$ Let $C \in {\sf cc}(G - S),$ $C^{+} = G[V(C) \cup S]$ and let ${\cal B'} = {\cal B} \cup \{ C \}.$ If ${\cal B'}$ is not a strict bramble then we proceed as in the base case. Assume that it is. Then, since $S$ covers ${\cal B}$ and $S \cap C = \emptyset,$ we have that $C \notin {\cal B}$ and thus $|{\cal B'}| > |{\cal B}|.$ Then by the induction hypothesis, there is a ${\cal B'}$-admissible lenient tree decomposition of $G,$ say $(T, χ).$ Ιf $(T, χ)$ is also $\cal{B}$-admissible then we are done. Otherwise, there exists a node $s \in V(T)$ of $(T, χ),$ such that $|χ(s)| > k,$ that covers ${\cal B}$ but does not cover ${\cal B'}.$ Since it does not cover ${\cal B'},$ it is implied that $χ(s) \subseteq V(G - C).$ Moreover, because of \autoref{@constellations}, any $(S,χ(s))$-separator of $G$ has size at least ${\sf order}({\cal B}).$ By Menger's Theorem, there exists a set $\{ P_{x} \mid x \in S \}$ of disjoint $S\text{-}χ(s)$ paths in $G.$ Since ${\sf order}({\cal B}) = |S|$ and $|S| < |χ(s)|,$ we can also assume that for every $x \in S,$ $x$ is the endpoint of $P_{x}$ in $S.$ Finally, observe that since $χ(s) \subseteq V(G - C),$ we can also assume that for each $x \in S,$ $V(P_{x}) \subseteq V(G - C).$

Now let $(T', χ')$ be the amalgamated restriction of $(T, χ)$ on $C^{+}$ with respect to $s.$ First observe that since $χ(s) \subseteq V(G - C),$ we have that $χ'(s) = S.$ It remains to show that $(T', χ')$ is ${\cal B}$-admissible. Let $t \in V(T'),$ such that $|χ'(t)| > k.$ Since $|S| \leq k,$ by definition, we have that $χ'(t) \cap V(C) \neq \emptyset,$ which also implies that $χ(t) \cap V(C) \neq \emptyset.$ Also, by \autoref{@dilaceraciones}, we have that $|χ(t)| \geq |χ'(t)| > k.$ Then since $(T, χ)$ is ${\cal B'}$-admissible and $χ(t) \cap V(C) \neq \emptyset,$ there must be some $B \in {\cal B}$ such that $χ(t) \cap Β = \emptyset.$ We show that $χ'(t) \cap B = \emptyset$ as well. Let $x \in χ'(t) \cap B.$ Let $t_{x} \in {\sf Trace}_{(T', χ')}(x)$ be closest to $s.$ Let $Z_{x} \subseteq T'$ be the unique $s\text{-}t_{x}$ shortest path in $T'.$ Suppose to the contrary that $χ'(t) \cap B \neq \emptyset.$ By definition of $χ'(t),$ it must be that $x \in S$ and $x \notin χ(t).$ Then this implies that $t \in V(Z_{x}).$ Since $|χ'(s)| \leq k,$ $t \neq s.$ Also, since $x \notin χ(t),$ $t \neq t_{x}.$ Then $t$ is an internal node of $T'.$ Moreover, since $χ(s)$ covers ${\cal B},$ we have that $χ(s) \cap B \neq \emptyset.$ Also by assumption, $χ(t_{x}) \cap B \neq \emptyset.$ Then, because of \autoref{@deliberately}, $χ(t) \cap B \neq \emptyset$ which contradicts the fact that $χ(t) \cap B = \emptyset.$
\end{proof}

\section{Edge-maximal graphs}
\label{@preoccupations}

In this section we prove a bound on the number of edges of a graph with strict bramble number at most $k.$ Recall that ${\cal G}_{k}=\{G\mid \sbn(G)\leq k\}.$ We achieve this by identifying the exact structure of all edge-maximal graphs of ${\cal G}_{k}.$ The striking difference with treewidth (where all $k$-trees on $n$ vertices have the same number of edges), is that, edge-maximal graphs of some specific size may have a varying number of edges. More formally, we do this by proving that the edge-maximal graphs of ${\cal G}_{k}$ are exactly the $k$-domino-trees.

\begin{theorem} 
\label{@sollicitations}
Let $G \in {\cal G}_{k}.$ $G$ is an edge-maximal graph if and only if, $G$ is a $k$-domino-tree.
\end{theorem}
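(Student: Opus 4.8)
The plan is to bootstrap everything from the min‑max equivalence \autoref{@interrelated}: by the equivalence of its items $(3)$ and $(4)$, ${\cal G}_{k}$ is exactly the class of partial $k$‑domino‑trees, i.e.\ of spanning subgraphs of $k$‑domino‑trees. This immediately gives the ``only if'' direction. If $G\in{\cal G}_{k}$ is edge‑maximal, take a $k$‑domino‑tree $D$ with $V(D)=V(G)$ and $G\subseteq D$. If $G\neq D$, pick $e\in E(D)\setminus E(G)$; then $G+e$ is still a spanning subgraph of $D$, so $G+e\in{\cal G}_{k}$, contradicting edge‑maximality. Hence $G=D$ is a $k$‑domino‑tree.

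For the ``if'' direction, let $G$ be a $k$‑domino‑tree. If $|V(G)|\le k$ then $G$ is complete and edge‑maximal trivially, so assume $|V(G)|\ge k+1$; by $(4)\Rightarrow(3)$ of \autoref{@interrelated}, $G\in{\cal G}_{k}$. If $G$ were not edge‑maximal, then $G+e\in{\cal G}_{k}$ for some non‑edge $e$, so $G+e$ — and hence $G$ — would be a spanning subgraph of some $k$‑domino‑tree $D$ with $E(G)\subsetneq E(D)$. Thus the remaining direction reduces to the claim $(\star)$: \emph{no $k$‑domino‑tree is a proper spanning subgraph of another $k$‑domino‑tree.}

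To prove $(\star)$, suppose $G\subseteq D$ are $k$‑domino‑trees on a common vertex set, with some $\{u,v\}\in E(D)\setminus E(G)$. As $u\not\sim v$ in the chordal graph $G$, fix a minimal $(u,v)$‑separator $S$ of $G$; by property~\ref{@kennzeichnet} it has size $k$, and by \autoref{@circonlocution} it is a clique (of $G$, hence of $D$). Let $C_{u},C_{v}$ be the components of $G-S$ containing $u,v$, let $K_{u}$ and $K_{v}$ be the (unique) maximal cliques of $G$ inside the augmented components $G[V(C_{u})\cup S]$ and $G[V(C_{v})\cup S]$ that contain $S$, and set $A=V(K_{u})\setminus S$, $B=V(K_{v})\setminus S$, so that $1\le |A|,|B|\le k$ and $V(K_{u})\cap V(K_{v})=S$. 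Suppose first that $|A|+|B|>k$. Then $|V(K_{u})\cup V(K_{v})|=k+|A|+|B|>2k$, so by property~\ref{@correspondiente} applied in $D$ the set $V(K_{u})\cup V(K_{v})$ is not a clique of $D$; since $V(K_{u})$ and $V(K_{v})$ are cliques of $D$, the missing edge lies between some $a\in A$ and $b\in B$. Now $a$ and $b$ are complete to $S$ in $G\subseteq D$, so for each $s\in S$ the path $a\text{-}s\text{-}b$ lies in $D$; hence any minimal $(a,b)$‑separator of $D$ — of size $k$ by property~\ref{@kennzeichnet} — contains $S$ and therefore equals $S$. But $a$ reaches $u$ through the $S$‑free connected set $V(C_{u})$, $b$ reaches $v$ through $V(C_{v})$, and $u\sim v$ in $D$; so $a$ and $b$ lie in one component of $D-S$, contradicting that $S$ separates them.

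It remains to handle the ``narrow'' case $|A|+|B|\le k$, and this is where the specially tuned axioms of $G$ enter. Now $|V(K_{u})\cup V(K_{v})|\le 2k$, with $K_{u}\neq K_{v}$ both maximal cliques of $G$ containing the size‑$k$ separator $S$. If $S$ has connectivity‑degree two, then $K_{u}$ and $K_{v}$ are its only two maximal cliques (a minimal separator of connectivity‑degree $d$ lies in exactly $d$ maximal cliques): if $S$ is external this violates property~\ref{@presentiment}; if $S$ is internal, writing $V(K_{u})=S\cup S'$, $V(K_{v})=S\cup S''$ by property~\ref{@precapitalist}, the bound $|A|+|B|\le k$ rearranges to $|S\cap S'|+|S\cap S''|\ge |S|$, and an extremal choice of $\{u,v\}$ and $S$ (maximizing $|V(K_{u})\cup V(K_{v})|$ over all valid choices, so as to control $S'\cap S''\subseteq S$) then forces $S\subseteq S'\cup S''$, contradicting property~\ref{@mandamientos}. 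The remaining cases — $S$ of connectivity‑degree at least three, or external with more than two associated maximal cliques — are dealt with analogously, using property~\ref{@bezeichnenden}, the remark that an external separator has connectivity‑degree at least $|{\cal K}_{G}(S)|$, and, where needed, \autoref{@manifestations} to descend to a deeper separator. Carrying out this final case analysis in full — confirming that properties~\ref{@mandamientos}, \ref{@presentiment} and \ref{@bezeichnenden} precisely forbid every way in which two maximal cliques of $G$ could be merged inside one maximal clique of $D$ — is the technical heart of the proof, and the step I expect to be the main obstacle.
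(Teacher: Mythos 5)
The ``only if'' direction and the reduction to the claim $(\star)$, that no $k$-domino-tree is a proper spanning subgraph of another $k$-domino-tree, are exactly the paper's. Your attack on $(\star)$, however, takes a different route: you pick an edge $\{u,v\}\in E(D)\setminus E(G)$ and a minimal $(u,v)$-separator $S$ \emph{of $G$}, whereas the paper picks a maximal clique $K$ of $D$, uses the observation that every minimal separator of $D$ is also one of $G$ to control the sizes of the components of $G-S$, and then splits on whether $K$ is an external or an internal maximal clique of $D$. Your ``wide'' case $|A|+|B|>k$ is correct and clean: the $k$ internally disjoint length-two $a\text{-}b$ paths through $S$ force any minimal $(a,b)$-separator of $D$ to equal $S$, while $a,u,v,b$ all lie in one component of $D-S$.

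The ``narrow'' case $|A|+|B|\le k$ is a genuine gap, and you say so yourself. The specific step that does not go through as written is the deduction from $|S\cap S'|+|S\cap S''|\ge |S|$ to $S\subseteq S'\cup S''$; this implication is simply false in general (take $|S\cap S'|=k-1$ and $|S\cap S''|=1$ with the singleton inside $S\cap S'$), so property~\ref{@mandamientos} is not yet contradicted. The proposed ``extremal choice'' of $\{u,v\}$ and $S$ is announced but never carried out, and the remaining subcases (connectivity-degree $\ge 3$, external $S$ with $|{\cal K}_G(S)|>1$, and a possible descent via \autoref{@manifestations}) are only named. What you are missing is essentially what the paper orchestrates differently: by starting from a maximal clique $K$ of $D$ rather than from an edge, the paper can use the bound $|V(C)|\le 2k$ on the augmented component of $G-S$ meeting $V(K)\setminus S$ (a direct consequence of $S$ being a minimal separator of both $D$ and $G$ and of property~\ref{@correspondiente} in $D$), and then a minimal $(x,y)$-separator $S''$ of $G$ inside $V(K)$ is pinned down by properties~\ref{@mandamientos} (when ${\sf cdeg}_G(S'')=2$) and~\ref{@precapitalist} (when ${\sf cdeg}_G(S'')>2$). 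Reorganizing your narrow case around the maximal clique of $D$ that contains $S\cup A\cup B$ rather than around $S$ itself is the natural way to close the gap.
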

\begin{proof}
Since $G \in {\cal G}_{k},$ by \autoref{@interrelated}, $G$ is a partial $k$-domino-tree. Assume that it is edge-maximal but not a $k$-domino-tree. This is an immediate contradiction, since we can still add edges to make it a $k$-domino-tree and remain in the class. For the converse assume that it is a $k$-domino-tree but not edge-maximal. Then, by \autoref{@interrelated}, $G$ is a proper spanning subgraph of a $k$-domino-tree, say $D.$ We prove that $G$ is isomorphic to $D,$ thus contradicting our assumption. If $V(G) \leq 2k,$ by definition and properties \ref{@presentiment}, \ref{@bezeichnenden}, $G$ and $D$ are isomorphic to $K_{|V(G)|}.$ Assume that $|V(G)| > 2k.$ Then $D$ has at least one minimal separator.

First observe that any minimal separator $S$ of $D$ is also a minimal separator of $G.$ Thus, since $G$ is a proper spanning subgraph of $D,$ for any connected component $C \in {\sf cc}(G - S),$ that contains some vertex of a connected component $C' \in {\sf cc}(D - S),$ it holds that $|V(C)| \leq |V(C')|.$ Let $K$ be a maximal clique in $D.$ If $G$ also contains $K$ as a maximal clique we are done. Assume otherwise.

First, assume that $K$ is an external maximal clique whose vertex set contains the external minimal separator $S \subseteq V(D)$ of $D.$ Let $C \in {\sf Acc}(G, S),$ such that $V(C)$ contains a vertex of $V(K) \setminus S.$ From the previous observation $|V(C)| \leq 2k.$ We distinguish two cases. Assume that $S$ is external in $G.$ If ${\sf cdeg}_{G}(S) = 2,$ then the union of the vertex sets of the two maximal cliques that contain $S,$ has size at most $2k,$ which contradicts property \ref{@presentiment}. Otherwise, in the context of $G,$ define ${\cal K}_{G}(S)$ as usual and assume that $|{\cal K}_{G}(S)| > 1.$ Then for any pair of different maximal cliques, say $K', K'' \in {\cal K}_{G}(S),$ since $|V(C)| \leq 2k,$ we have that ${\sf val}(K') + {\sf val}(K'') \leq k,$ which contradicts property \ref{@bezeichnenden}. Now, assume that $S$ is internal in $G.$ Then $C$ contains some other external minimal separator of $G,$ and we can reapply the two previous arguments.

Now, assume that $K$ is an internal maximal clique in $D$ and let $S, S' \subseteq V(D)$ be the two minimal separators covering $V(K).$ Since $K$ is not a maximal clique in $G,$ there exists a pair of vertices $x \in S, y \in S',$ that are not adjacent in $G.$ Then there exists a minimal $(x, y)$-separator $S'' \subseteq V(G)$ in $G$ contained in $V(K).$ Observe that, $S''$ cannot be an internal minimal separator with ${\sf cdeg}_{G}(S'') = 2,$ as that would contradict property \ref{@mandamientos}. Also, if ${\sf cdeg}_{G}(S'') > 2,$ then there exists a connected component of $G - S'',$ contained in $V(K),$ which implies that there exists at least one vertex of $V(K)$ that is not covered by $S \cup S'$ in $D.$ This contradicts property \ref{@precapitalist} in $D.$
\end{proof}

\paragraph{Bounds.} We continue by presenting a tight upper bound for the number of edges an edge-maximal graph in ${\cal G}_{k}$ can have. Consider $(T, χ)$ to be an extreme lenient tree decomposition of width $k$ for some graph $G.$ We proceed to count the maximum number of edges that $G$ can have. Recall, that in an extreme lenient tree decomposition all bags have size $k$ and no pair of bags is a subset of one another.

Root $T$ from some arbitrary internal node $ρ \in V(T).$ For every node $t \in V(T)$ different than $ρ,$ let $p \in V(T)$ be the parent of $t,$ i.e. the neighbor closest to $ρ.$ Then, define $k_{t} = |χ(t) \setminus χ(p)|,$ i.e. the number of new vertices of $G$ that this node introduces. Notice that, $k_{t} \in [k].$ Also for notational simplicity, let $X = V(T) \setminus \{ρ\}.$ In this way, it is clear that
\begin{align*}
&|V(G)| = k + \sum_{t \in X} k_{t},
\end{align*}
and for the number of edges,
\begin{align*}
|E(G)| &= \binom{k}{2} + \sum_{t \in X} \Bigg[ \binom{k_{t}}{2} + k \cdot k_{t} \Bigg]\\
&= \binom{k}{2} + \bigg(k - \frac{1}{2} \bigg)\bigg(|V(G)| - k\bigg) + \frac{1}{2} \sum_{t \in X} k_{t}^{2}.
\end{align*}

To maximize the above quantity, we have to maximize the sum of squares. We have that $\sum_{t \in X} k_{t} = |V(G)| - k$  and we want to partition this quantity into $|X|$ variables, $k_{t} \in [k],$ so as to maximize $\sum_{t \in X} k_{t}^{2}.$ The optimal solution is given by having as many variables equal to $k$ as possible. So, if $|V(G)| = c \cdot k + r,$ where $r < k,$ the optimal is given when  $k$ vertices 
are introduced for the root (this introduces $k\choose 2$ edges), $c-1$ nodes introduce $k$ new vertices each (this introduces $k^2+{k\choose 2}$ edges at a time) and we have a single node that introduces the remainder $r$ (this introduces $rk+ \binom{r}{2}$ edges). We have that:
\begin{eqnarray}
|E(G)| & \leq & \binom{k}{2} + \frac{c - 1}{2} \Big( 3k^{2} - k \Big) + rk + \binom{r}{2} \label{@distribuidos}
\end{eqnarray}
Replacing $c$ by $(|V(G)| - r)/k$ and $r$ by $|V(G)|~{\sf mod}~k,$ in \eqref{@distribuidos} we have that:
\begin{eqnarray}
|E(G)| & \leq & \frac{3k - 1}{2} |V(G)|- k^{2} - \frac{k}{2} \Big( |V(G)|~{\sf mod}~k \Big) + \frac{1}{2} \Big( |V(G)|~{\sf mod}~k \Big)^{2}  \label{@aubervilliers}
\end{eqnarray}

Let $G \in {\cal G}_{k}.$ Then there is a function $f : {\Bbb N} \times {\Bbb N} \to {\Bbb N}$ such that $|E(G)| \leq f(|V(G)|, k),$ where $f(|V(G)|, k),$ is defined as the right-hand upper bound  in \eqref{@aubervilliers}. This gives an upper bound to the number of edges of a graph $G$ where $\sbn(G)\leq k.$ 

Also this bound is tight. Consider a $k$-domino-tree $D,$ with $|V(D)| = |V(G)|,$ that is made up of $c$ linearly arranged maximal cliques, where one of the external maximal cliques has size $k + r,$ while all others have size $2k$ (see \autoref{@affectibility}). Then $|E(D)| = f(|V(D)|, k).$


\begin{figure}[t]
\begin{center}
\begin{tikzpicture}
\node (1) at (0,0) {};
\node (2) at (0,-1) {};
\node (3) at (1,0) {};
\node (4) at (1,-1) {};
\node (5) at (2,0) {};
\node (6) at (2,-1) {};
\node (7) at (3,0) {};
\node (8) at (3,-1) {};

\path (1) edge (2);
\path (1) edge (3);
\path (1) edge (4);
\path (2) edge (3);
\path (2) edge (4);
\path (3) edge (4);

\path (3) edge (4);
\path (3) edge (5);
\path (3) edge (6);
\path (4) edge (5);
\path (4) edge (6);
\path (5) edge (6);

\path (5) edge (6);
\path (5) edge (7);
\path (5) edge (8);
\path (6) edge (7);
\path (6) edge (8);
\path (7) edge (8);
\end{tikzpicture}
\end{center}
\caption{A $2$-domino-tree on $8$ vertices, and $16$ edges, which is the maximum possible number of edges a $2$-domino-tree on $8$ vertices may have.}
\label{@affectibility}
\end{figure}
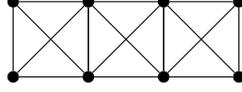

Now, assume that $|V(G)| \geq 3k$ and let $D'$ be a $k$-domino-tree, with $|V(D')| = |V(D)|,$ that is made up of $(c - 3)k + r + 2$ linearly arranged maximal cliques in a fan way, where all maximal cliques have $k-1$ vertices in common, both of the external maximal cliques have size $2k$ and all others have size $k + 1$ (see \autoref{@encareciendo}). Then:
\begin{eqnarray}
|E(D')| & = & \binom{k}{2} + ck^{2} + (r - 1)k \label{@antithetically}
\end{eqnarray}
Again, by replacing $c$ with $(|V(D')| - r)/k,$ in \eqref{@antithetically} we have that:
\begin{eqnarray}
|E(D')| & = & k|V(D')| + \frac{k^{2} - 3k}{2} \label{@universellement}
\end{eqnarray}

To conclude, this implies that the edge-maximal graphs  in \autoref{@sollicitations} do not necessarily have the same number of edges. Let $G \in {\cal G}_{k}$ be an edge-maximal graph with the maximum possible number of edges and let $G' \in {\cal G}_{k}$ be an edge-maximal graph with the minimum possible number of edges, such that $|V(G)| = |V(G')| = c \cdot k + r,$ where $c \geq 3.$ A lower bound on the size difference between the two edge sets is the following:
\begin{eqnarray}
|E(G)| - |E(G')| & \geq & \frac{c - 3}{2}  k(k-1) + \binom{r}{2}
\end{eqnarray}
or as a function of $n = |V(G)| = |V(G')|$:
\begin{eqnarray}
|E(G)| - |E(G')| & {\geq} & \frac{k - 1}{2} n - \frac{k}{2} \Big( n~{\sf mod}~k \Big) + \frac{1}{2} \Big( n~{\sf mod}~k \Big)^{2} - 3 \binom{k}{2} \label{@bereinstimmung}
\end{eqnarray}
Already for $k=2$ and $n = 8$ in (\ref{@bereinstimmung}), we have that $|E(G)| - |E(G')| \geq 1.$ \autoref{@affectibility} and \autoref{@encareciendo} depict this.

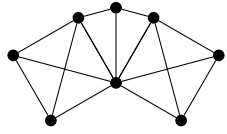
\begin{figure}[t]
\begin{center}
\begin{tikzpicture}

\node (1) at (-0.366,-0.634) {};
\node (2) at (0.134,-1.5) {};
\node (3) at (0.5,-0.134) {};

\node (6) at (1.5,-0.134) {};
\node (7) at (2.366,-0.634) {};
\node (8) at (1.866,-1.5) {};

\node (4) at (1,-1) {};
\node (5) at (1,0) {};

\path (1) edge (2);
\path (1) edge (3);
\path (1) edge (4);
\path (2) edge (3);
\path (2) edge (4);
\path (3) edge (4);

\path (3) edge (4);
\path (3) edge (5);
\path (4) edge (5);
\path (4) edge (6);
\path (5) edge (6);

\path (4) edge (6);
\path (4) edge (7);
\path (4) edge (8);
\path (6) edge (7);
\path (6) edge (8);
\path (7) edge (8);
\end{tikzpicture}
\end{center}
\caption{A $2$-domino-tree on $8$ vertices and $15$ edges.}
\label{@encareciendo}
\end{figure}

\section{Minor obstructions for strict bramble number at most two}

We use the term {\em graph collection} for finite sets of graphs, while for infinite sets of graphs we use the term {\em graph class}. Given a collection of graphs ${\cal H},$ we denote by $\exc({\cal H})$ the class of all graphs that do not contain any of the graphs in ${\cal H}$ as a minor.

A graph class ${\cal G}$ is {\em minor-closed} if every minor of a graph in ${\cal G}$ belongs to ${\cal G}.$ We define the set $\obs({\cal G})$ of all minor-minimal graphs not belonging to ${\cal G}.$ A direct consequence of the celebrated Robertson and Seymour Theorem \cite{RobertsonS04GMXX}, is that $\obs({\cal G})$ is a finite set. This implies that every minor-closed graph class ${\cal G}$ can be finitely characterised by this {\em obstruction set}, as a graph $H$ belongs to ${\cal G},$ if and only if, none of the (finitely many) graphs in $\obs({\cal G})$ is a minor of $H.$ Therefore ${\cal G}=\exc(\obs({\cal G})).$ Although the identification of the obstruction set of a minor-closed graph class can be a very difficult task (as the size of this set more often than not is enormous), there is an ever-growing list of characterizations (partial or complete)  of the obstruction sets for diverse minor-closed graph classes (see e.g., \cite{LeivaditisSSTTV20mino,Holst02onth,RobertsonST95sach,BodlaenderThil99,Thilikos00,ArnborgPC90forb,Archdeacon06akur,DinneenX02mino,KinnersleyL94,FioriniHJV17thee,DinneenV12obst,Wagner37uber}).

Recall that we have defined ${\cal G}_{k}$ to be the class of graphs with strict bramble number at most $k.$ The goal of this section is to give an alternative characterization of ${\cal G}_{2}$ in terms of {\em forbidden minors}. First, as we have already noticed, $\obs({\cal G}_{1})=\{K_3\},$ as ${\cal G}_{1}$ contains exactly the graphs in which each connected component is a tree. We start the study of $\obs({\cal G}_{2})$ with the following lemma.

\begin{lemma}\label{@redistribute} Every graph in $\obs({\cal G}_{2})$ is $2$-connected.
\end{lemma}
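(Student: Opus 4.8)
The standard template for showing obstructions to a minor-closed parameter are connected (and here $2$-connected) is a ``cut-vertex surgery'' argument: take a hypothetical counterexample $G\in\obs({\cal G}_2)$ that has a cut-vertex (or is disconnected), find a strictly smaller minor of $G$ that is \emph{already} outside ${\cal G}_2$, contradicting minor-minimality. So I would argue by contradiction: assume $G\in\obs({\cal G}_2)$ is not $2$-connected. Since $G\notin{\cal G}_2$ it has $\sbn(G)\ge 3$, hence (by \autoref{@interrelated}, or directly) $G$ is not a partial $2$-domino-tree, and in particular $G$ has at least $3$ vertices and at least one edge. The first easy case is $G$ disconnected: then $\sbn(G)=\max_{C}\sbn(C)$ over the connected components $C$ (a strict bramble lives entirely inside one component, since its members are connected and pairwise intersecting), so some component $C$ has $\sbn(C)\ge 3$, i.e. $C\notin{\cal G}_2$; as $C$ is a proper minor of $G$ this contradicts minimality. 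So we may assume $G$ is connected with a cut-vertex $v$.

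\textbf{Main step: the cut-vertex case.} Write $G - v$ with connected components, and group them to obtain two subgraphs $G_1,G_2$ with $G_1\cup G_2=G$ and $V(G_1)\cap V(G_2)=\{v\}$, each $G_i$ a proper subgraph of $G$ (possible since $v$ is a cut-vertex and $G$ is connected, so $G-v$ has at least two components and each $G_i$ has strictly fewer vertices than $G$). Each $G_i$ is a proper minor of $G$, so $G_i\in{\cal G}_2$, i.e. $\sbn(G_i)\le 2$. The crux is the claim that $\sbn(G)\le\max\{\sbn(G_1),\sbn(G_2)\}$ when $G$ is the vertex-sum of $G_1$ and $G_2$ at a single vertex $v$ --- which would give $\sbn(G)\le 2$, the desired contradiction. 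I would prove this claim via the lenient-tree-decomposition characterization (item~2 of \autoref{@interrelated}, combined with \autoref{@comprehensibility} to get \emph{extreme} decompositions, or just any lenient tree decompositions of width $\le 2$ of $G_1$ and $G_2$): take lenient tree decompositions $(T_1,\chi_1)$ of $G_1$ and $(T_2,\chi_2)$ of $G_2$, each of width $\le 2$. Pick a node $t_i\in V(T_i)$ with $v\in\chi_i(t_i)$ (exists by (C1)); such a node exists inside the subtree ${\sf Trace}(v)$. Now glue: take the disjoint union of $T_1$ and $T_2$ and add the edge $\{t_1,t_2\}$, keeping all bags unchanged. Call the result $(T,\chi)$. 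I claim $(T,\chi)$ is a lenient tree decomposition of $G$ of width $\le 2$: condition (C1) is clear since $V(G)=V(G_1)\cup V(G_2)$; for (C2), every edge of $G$ lies in $E(G_1)$ or in $E(G_2)$, and is therefore covered by two close nodes within $T_1$ or within $T_2$ (hence within $T$); for (C3), the only shared vertex is $v$, and $\{t\mid v\in\chi(t)\}={\sf Trace}_{1}(v)\cup{\sf Trace}_{2}(v)$, which is connected in $T$ precisely because we added the edge $\{t_1,t_2\}$ joining a node of ${\sf Trace}_1(v)$ to a node of ${\sf Trace}_2(v)$ --- so the two subtrees, each connected, become one connected subtree; every other vertex of $G$ lies in exactly one of $G_1,G_2$ and its trace is unchanged, hence still connected. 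The width is $\max$ of the two widths, so $\le 2$. Then by \autoref{@interrelated} again, $\sbn(G)=\sbn(G)\le 2$, i.e. $G\in{\cal G}_2$, contradiction.

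\textbf{Remark on what is delicate.} The one place to be a little careful is that the paper's lenient tree decomposition requires the \emph{condition numbers} \ref{@predominantly}, \ref{@calamitously}, \ref{@unpretentious} --- I should cite those labels rather than the bare ``(C1)--(C3)'' reused for ordinary tree decompositions, since in this paper both notions reuse that notation. Also the gluing works verbatim regardless of whether $v$ is an isolated vertex in one side or not; if $G - v$ has more than two components, grouping them into exactly two nonempty parts still yields two \emph{proper} subgraphs (each missing at least one vertex of the other part), so minor-minimality applies to both. Nothing here is truly an obstacle: the argument is a routine ``one-vertex amalgamation of lenient tree decompositions,'' much simpler than the general $S$-amalgamation of the previous section, and it reduces $2$-connectivity of obstructions to the subadditivity-type inequality $\sbn(G_1\cup_v G_2)\le\max\{\sbn(G_1),\sbn(G_2)\}$, which the lenient-tree-decomposition model makes transparent. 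If one prefers to avoid the decomposition machinery, the same inequality can be shown directly on strict brambles: any strict bramble ${\cal B}$ of $G$ either has all its members inside $G_1$, or all inside $G_2$, or every member contains $v$ (since two members on opposite sides can only intersect at $v$); in the first two cases it is a strict bramble of $G_i$ of the same order, and in the last case $\{v\}$ covers it so its order is $1\le 2$ --- yielding ${\sf order}({\cal B})\le\max\{\sbn(G_1),\sbn(G_2),1\}\le 2$.
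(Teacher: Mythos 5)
Your proof is correct and follows essentially the same approach as the paper: split $G$ at a cut-vertex into two proper minors $G_1,G_2\in{\cal G}_2$ and glue width-$2$ lenient tree decompositions of $G_1$ and $G_2$ by adding a single tree edge between nodes whose bags contain the cut-vertex, then verify \ref{@predominantly}--\ref{@unpretentious}. The paper packages this as a reusable ``identify one vertex of two disjoint graphs in ${\cal G}_2$'' observation and leaves the disconnected and small-graph preliminaries implicit, but the gluing construction and its verification are the same as yours.
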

\begin{proof}
Let $G, G' \in {\cal G}_{2},$ such that they are disjoint. Let $x \in V(G),$ $y \in V(G')$ and let $G''$ be the graph, that is the result of the disjoint union of $G$ with $G',$ after identifying $x, y$ into a single vertex. Observe that $G'' \in {\cal G}_{2}.$ To see this, take any two disjoint lenient tree decompositions of $G$ and $G'$ and consider a new one:  the tree is created by 
adding an edge between the two nodes of the trees containing $x$ and $y$ and the set of  bags is the union of the set of bags of the two decompositions. This is a lenient tree decomposition of $G''$ of the same width. Now let $H \in \obs({\cal G}_{2})$ and assume it is not $2$-connected. Then there exists a cut-vertex that splits $H,$ into at least two proper minors of $H$ that belong to ${\cal G}_{2}.$ Then, by the previous argument, $H \in {\cal G}_{2},$ a contradiction.
\end{proof}

Let $G$ be a graph and let $(T, χ)$ be a tree decomposition of $G.$ The {\em adhesion set of an edge} $e = \{t, t'\} \in E(T)$ is the vertex set $χ(t) \cap χ(t')\subseteq V(G)$ and the {\em adhesion of an edge} $e \in E(T)$ is the size of the adhesion set of $e.$ The {\em adhesion} of the tree decomposition $(T, χ)$ is equal to the maximum adhesion of the edges of $T.$ The {\em adhesion variety} of a node $ t \in V(T),$ denoted by ${\sf adv}(t),$ is the number $|\{χ(t) \cap χ(t') \mid t' \in N_{T}(t)\}|,$ i.e. the number of different adhesion sets of edges incident to $t$ (see \autoref{fig_tree_dec}). For every $t \in V(T),$ we define the edge set $E(t) = \bigcup_{t' \in N_{T}(t)} {χ(t) \cap χ(t') \choose 2}$ and the {\em $t$-torso} of $(T, χ)$ as the graph $(χ(t), E(G[χ(t)]) \cup E(t)).$ We also call $(T, χ)$-{\em augmentation of} $G$ the graph obtained if we take the union of every $t$-torso of $G,$ $t \in V(T),$ and we denote it by ${\sf Aug}_{T,χ}(G),$ or simply ${\sf Aug}(G)$ when $({T,χ})$ is clear from the context. We call the edges of ${\sf Aug}_{T,χ}(G)$ that are not edges of $G,$ {\em $(T,χ)$-completion edges} (see \autoref{fig_tree_dec}). 

We also require the well-known notion of {\em triconnected decompositions}. The following result is a restatement, in our terminology, of the classic result of Tutte (see~\cite{Tutte66}). 

\begin{@menospreciando} \label{@menospreciando}
Every graph has a tree decomposition $(T, \chi)$ of adhesion at most two where every $t$-torso is either a 3-connected graph or a complete graph on at most three vertices. Moreover, for every $t\in V(T),$ and every completion edge $\{x,y\}$ of the $t$-torso, there is a $x\text{-}y$ path of length at least two in $G$ that does not contain edges of the $t$-torso and every two such paths, corresponding to the same torso, are internally disjoint.
\end{@menospreciando}

We call a decomposition $(T, χ)$ of a graph $G,$ as in \autoref{@menospreciando}, a {\em triconnected decomposition} of $G$ and its $t$-torsos {\em triconnected components} of $G.$ Observe that every triconnected component of $G$ is a minor of $G.$

We also require the following easy proposition. It follows easily by the classic result of Tutte in~\cite{Tutte61athe}, asserting that for every 3-connected graph $G$ there is a sequence $G_{1},\ldots,G_{r}$
of 3-connected graphs such that $G_{i}$ is a minor of $G_{i+1},$ for every $i\in[r-1]$ and where $G_{r}=G$ and $G_{1}$ is a wheel graph.

\begin{figure}[t]
\begin{center}
\graphicspath{{./Figures/}}
\scalebox{1}{\includegraphics{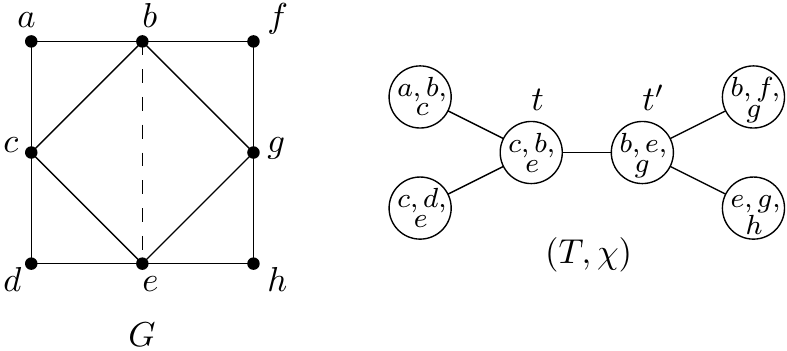}}
\end{center}
\caption{On the right we can see a tree-decomposition $(T, χ)$ of $G$ (we have marked the vertices of each bag $χ(t)$). Notice that the adhesion set of $\{t,t'\} \in E(T)$ is the set $\{b,e\}$ and that the adhesion of $(T, χ)$ is $2.$ Also notice that the $t$-torso is the graph $G[\{c,b,e\}]$ with the addition of the $(T,χ)$-completion edge $\{b,e\}.$ Additionally, ${\sf adv}(t) = 3$ and ${\sf Aug}_{T, χ}(G)$ is $G$ with the addition of the edge $\{b,e\}.$}
\label{fig_tree_dec}
\end{figure}

\begin{@menospreciando}\label{@testiguienne}
Every 3-connected graph not containing $W_{4}$ as a minor is isomorphic to $K_{4}.$
\end{@menospreciando}

Let ${\cal Z}$ be the collection of graphs $W_{4},$ $H_{1},$ $H_{2},$ as seen in \autoref{fig_obs}. We prove the following Lemmata.

\begin{lemma}\label{@menschengeist} Let $G$ be a $2$-connected graph that excludes as a minor the graphs in ${\cal Z}.$ Then there is a triconnected decomposition $(T, χ),$ such that:
\begin{itemize}
\item every edge of $T$ has adhesion two;
\item every node of $T$ has adhesion variety at most two;
\item for every  $t \in V(T)$ with adhesion variety two, the set  $χ(t)$ is the union of the adhesion sets of the edges incident to $t$ in $T.$
\end{itemize}
\end{lemma}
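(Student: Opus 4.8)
The plan is to start from the triconnected decomposition $(T,\chi)$ of $G$ furnished by \autoref{@menospreciando} and argue that, after a routine normalisation, it already enjoys all three properties. First, since $G$ is $2$-connected, every edge of $T$ has adhesion exactly two: the adhesion set of an edge of $T$ separates $G$, so a set of size at most one would either disconnect $G$ or be a cut vertex. (If some edge $\{t,t'\}$ of $T$ has $\chi(t)\subseteq\chi(t')$, I would first contract it, reconnecting the neighbours of $t$ to $t'$; this only enlarges the edge set of the $t'$-torso and so preserves the property that every torso is $3$-connected or a clique on at most three vertices.) This settles the first bullet. Next, since every triconnected component of $G$ is a minor of $G$ and $G$ excludes $W_4$, no triconnected component contains $W_4$; by \autoref{@testiguienne} every $3$-connected torso is isomorphic to $K_4$. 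Hence every torso is one of $K_2$, $K_3$, $K_4$ (the degenerate $K_1$ does not occur), so every bag has size $2$, $3$, or $4$.

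It remains to bound the adhesion variety and check the third bullet. Fix $t\in V(T)$ and let $A_1,\dots,A_m$ be the distinct adhesion sets of edges of $T$ incident to $t$ (each of size two). If $|\chi(t)|\le 3$ the only thing to verify is $m\le 2$, since any two distinct $2$-subsets of a $3$-set already cover it; if $|\chi(t)|=4$ I must additionally exclude $m=2$ with $A_1\cap A_2\neq\emptyset$. The tool I would use is \autoref{@menospreciando}: each edge $\{t,t'\}$ of $T$ with adhesion set $\{x,y\}$ yields, on the $t'$-side, an $x$-$y$ path of length at least two internally disjoint from the $t$-torso — this is exactly \autoref{@menospreciando} when $xy$ is a completion edge of the $t$-torso, and when $xy\in E(G)$ the $t'$-side still contains such a path, as otherwise $x$ or $y$ would be a cut vertex of $G$. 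Over the edges of $T$ incident to $t$, these paths have pairwise disjoint interiors (they lie in distinct subtrees of $T-t$) and meet $\chi(t)$ only at their endpoints; contracting each of them to a single internal vertex exhibits a minor of $G$ consisting of the vertices of the torso, its real edges, and, for each $A_i=\{x,y\}$, a fresh degree-two vertex adjacent to $x$ and $y$.

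Now the case analysis. For a $K_4$ torso on $\{a,b,c,d\}$, two distinct adhesion sets either form a perfect matching, in which case $A_1\cup A_2=\chi(t)$ as required, or share a vertex; in the latter case the minor above is $K_4$ with one degree-two vertex attached to $\{a,b\}$ and another to $\{a,c\}$, and I would check by inspection of this six-vertex graph that it contains $H_1$ (or $H_2$) as a minor, contradicting the hypothesis on $G$ — and $m\ge 3$ is then impossible a fortiori, since among three $2$-subsets of a $4$-set two must share a vertex. For a $K_3$ torso with $m=3$, all three triangle edges must be completion edges (a triangle torso with three incident $T$-edges has no room for a real edge), so the minor contains the $6$-cycle $a,v_{ab},b,v_{bc},c,v_{ca}$ together with the further structure supplied by the three nontrivial neighbours of $t$, and I would show this forces one of $W_4,H_1,H_2$ as a minor; hence $m\le 2$ here too, and then $\chi(t)$ is automatically the union of the two adhesion sets. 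The main obstacle is precisely this last step: identifying $H_1$ and $H_2$ explicitly and verifying that every ``overloaded'' local configuration contains one of the graphs in ${\cal Z}$ as a minor, while the configurations the lemma permits ($K_4$ glued along a matching of adhesion sets, $K_3$ glued along two of its edges) do not — and doing the bookkeeping carefully, since a neighbour may contribute only a single real edge rather than a long ear, so one must track which torso edges survive as genuine edges of the constructed minor.
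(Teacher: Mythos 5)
The central gap is that your proposal lacks the extremal argument that drives the paper's proof. \autoref{@menospreciando} does not single out a canonical decomposition, and not every triconnected decomposition it yields satisfies the adhesion-variety bound, even for graphs excluding all of ${\cal Z}.$ For $G = C_6$ on vertices $1, \ldots, 6,$ consider the decomposition with a central node $t$ having $\chi(t) = \{1,3,5\}$ (a $K_3$ torso whose three edges are all completion edges) and three leaves with bags $\{1,2,3\},$ $\{3,4,5\},$ $\{5,6,1\}.$ Every clause of \autoref{@menospreciando} is met (for the central torso the required paths are $1\text{-}2\text{-}3,$ $3\text{-}4\text{-}5,$ $5\text{-}6\text{-}1,$ pairwise internally disjoint), yet ${\sf adv}(t) = 3,$ although $C_6$ excludes $W_4,$ $H_1,$ $H_2.$ Your ``routine normalisation'' (contracting $T$-edges one of whose endpoints has bag contained in the other's) does nothing to this example. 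The paper fixes this by taking a triconnected decomposition minimizing $\sum_{t \in V(T)}({\sf adv}(t)-2)$; when some augmented component $C_i$ attached along an adhesion set $A_i \notin E(G)$ is merely a path (exactly as around $t$ in the $C_6$ example), there is no forbidden minor to be found, and instead one performs a local surgery — replacing $t$ by a chain of triangle nodes through the third vertex of $\chi(t)$ — which strictly decreases the objective, contradicting minimality.

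Linked to this, your minor extraction is too weak in the complementary case. You contract each ear $C_i$ to a single degree-two vertex and keep only the real edges of the torso. The paper extracts $H_1$ only when \emph{no} $C_i$ is a path, in which case $2$-connectedness forces each $C_i$ to contain a cycle, and $C_i$ is contracted to a \emph{triangle} having $A_i$ as one side; crucially this reintroduces the edge $A_i$ in the minor even when $A_i \notin E(G).$ With only a pendant degree-two vertex per ear and the completion edges of the torso deleted, the graph you obtain can be as sparse as a $6$-cycle, which contains none of ${\cal Z}.$ Your assertion that a $K_3$ torso of adhesion variety three ``has no room for a real edge'' is also false — a real edge of $G$ can be a $2$-separator and hence serve as an adhesion set — though that is a secondary slip. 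Your treatment of the $K_4$ torso essentially matches the paper's Case~2 and is sound, but the $K_3$ case cannot be repaired by bookkeeping alone; the extremal choice of decomposition is the missing ingredient.
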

\begin{proof}
Let $(T, χ)$ be a triconnected decomposition of $G,$ which subject to \autoref{@menospreciando} minimizes,
\begin{align*}
&μ(Τ,χ) := \sum_{t \in V(T)} ({\sf adv}(t) - 2).
\end{align*}
Since $G$ is $2$-connected we can assume that every edge of $T$ has adhesion exactly two. Observe that, since $W_4$ is not a minor of $G,$ by \autoref{@testiguienne}, every $3$-connected $t$-torso of $(T, χ)$ is isomorphic to $K_{4}.$ Also, since $(T, χ)$ has adhesion two, every other triconnected component of $G$ must be isomorphic to $K_3.$ Therefore, for every $t\in V(T),$ $|χ(t)|\in\{3,4\}.$ For every node $t \in V(T),$ let ${\cal A}_{t}$ be the family of different adhesion sets of the edges incident to $t$ in $T.$ Notice that, $|{\cal A}_{t}| = {\sf adv}(t).$ For each $A_{i} \in {\cal A}_{t},$ $i \in [{\sf adv}(t)],$ we consider some $C_{i} \in {\sf Acc}(G,A_{i}),$ where $χ(t) \not\subseteq V(C_{i}).$ We distinguish two cases.\medskip

\noindent{\em Case 1:} $G[χ(t)]$ is isomorphic to $K_{3}.$ Assume that $|{\cal A}_{t}| \geq 3.$ Then we distinguish two subcases.\smallskip

\noindent{\em Subcase 1a:} For some $i \in [3],$ $C_{i}$ is a path between the two vertices $x, y \in A_{i}.$ This implies that $A_{i} \notin E(G)$ and that $C_{i}$ has length at least $2.$ Let $z \neq x, y$ be the third vertex of $χ(t).$ Then we can add a completion edge between $z$ and every vertex of $C_{i}$ and then replace node $t$ with a path in $T,$ for each new triangle created. Clearly, this new triconnected decomposition decreases $μ(T,χ),$ which is a contradiction.\smallskip

\noindent{\em Subcase 1b:}  For every $i \in [3],$ $C_{i}$ is not a path between the two vertices $x, y \in A_{i}.$ Then, since $G$ is $2$-connected, $C_{i}$ contains a cycle. Then we can contract $C_{i}$ into a triangle with $x, y$ as one side, and we obtain $H_{1}$ as a minor, which contradicts our hypothesis.\medskip

\noindent{\em Case 2:} $G[χ(t)]$ is isomorphic to $K_{4}.$ We prove that for every pair of different $A_{i}, A_{j} \in {\cal A}_{t},$ $A_{i} \cap A_{j} = \emptyset.$ Notice that this implies that $|{\cal A}_{t}| \leq 2$ and that if $|{\cal A}_{t}| = 2,$ $χ(t) = \bigcup {\cal A}_{t}.$ Assume to the contrary, that $A_{i} \cap A_{j} \neq \emptyset,$ for some pair of different $A_{i}, A_{j} \in {\cal A}_{t}.$ Then contract $C_{i}, C_{j}$ into single vertices and remove $A_{i}, A_{j}$ from $E(G),$ if the edges exist. Then we get $H_{2}$ as a minor of $G,$ which again contradicts our hypothesis.
\end{proof}

\begin{lemma}\label{@uncomprehended} Let $G$ be a $2$-connected graph that excludes as a minor the graphs in ${\cal Z}.$ Then $G \in {\cal G}_{2}.$
\end{lemma}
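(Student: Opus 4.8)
The plan is to build, for every such $G$, a lenient tree decomposition of width at most $2$; by the equivalence of statements $2$ and $3$ in \autoref{@interrelated} this gives $\sbn(G)\le 2$, that is $G\in{\cal G}_2$. First I would invoke \autoref{@menschengeist} to fix a triconnected decomposition $(T,\chi)$ of $G$ in which every edge of $T$ has adhesion two, every node has adhesion variety at most two, and for every node $t$ of adhesion variety two the bag $\chi(t)$ equals the union of the two adhesion sets of the edges at $t$. Recall from the proof of \autoref{@menschengeist} (via \autoref{@testiguienne}) that every torso of $(T,\chi)$ is isomorphic to $K_3$ or $K_4$; in particular $|\chi(t)|\le 4$ and $G[\chi(t)]$ is a subgraph of the complete graph $K_{|\chi(t)|}$ for every node $t$.

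The next step is to replace each node $t$ by a \emph{gadget} $(T_t,\chi_t)$: a lenient tree decomposition of $K_{|\chi(t)|}$, hence of $G[\chi(t)]$, of width at most $2$, consisting of at most two bags of size at most $2$, and such that each adhesion set incident to $t$ occurs as one of its bags. If $t$ has adhesion variety two with adhesion sets $A_1,A_2$, then $\chi(t)$ is the union of ${\cal A}_t$ by \autoref{@menschengeist}, and since $|A_1|=|A_2|=2$ these sets are disjoint when $|\chi(t)|=4$ and meet in exactly one vertex when $|\chi(t)|=3$; in both cases the two-node path with bags $A_1$ and $A_2$ is a lenient tree decomposition of $K_{|\chi(t)|}$ of width $2$, because every pair of $\chi(t)$ sits in a single bag or in two close bags. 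If $t$ has adhesion variety at most one, let $A_1$ be its unique incident adhesion set (if $t$ has no incident edge then $T=\{t\}$ and any split of $\chi(t)$ into two parts of size $\le 2$ already gives such a decomposition of $G=G[\chi(t)]$), and put $A_2:=\chi(t)\setminus A_1$; once more the two-node path with bags $A_1$ and $A_2$ is a width-$2$ lenient tree decomposition of $K_{|\chi(t)|}$ having $A_1$ among its bags, and any vertex of $\chi(t)$ lying in no neighbouring torso occurs only in the bag $A_2$.

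I would then form $(T',\chi')$ from the disjoint union of all the gadgets by identifying, for every edge $\{t,s\}$ of $T$ with adhesion set $A$, the bag equal to $A$ in $(T_t,\chi_t)$ with the bag equal to $A$ in $(T_s,\chi_s)$; this is well defined since $A$ is an adhesion set at both $t$ and $s$ and so occurs as a bag of both gadgets. As $T$ is a tree and each identification glues one node of one gadget to one node of another, $T'$ is a tree, and every bag has size at most $2$. It remains to check the three conditions of a lenient tree decomposition. Condition \ref{@predominantly}: the bags of the gadget of $t$ have union $\chi(t)$, so the union of all bags is $\bigcup_t\chi(t)=V(G)$. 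Condition \ref{@calamitously}: every edge of $G$ lies in some bag $\chi(t)$ of $(T,\chi)$, hence is a pair of $\chi(t)$ and is covered by two close bags of $(T_t,\chi_t)$; these bags remain close in $T'$, since the edges of each gadget survive and identified bags stay identical. Condition \ref{@unpretentious}: inside each gadget the trace of a vertex $x$ is connected (each gadget has at most two bags), $x$ occurs only in gadgets of nodes whose torso contains $x$, and whenever $\{t,s\}$ is an edge of $T$ with $x\in\chi(t)\cap\chi(s)=A$ the bag $A$ contains $x$ and is exactly the node at which the gadgets of $t$ and $s$ were glued; running this along the subtree of $T$ that is the trace of $x$ in $(T,\chi)$ shows that the global trace of $x$ is connected. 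Hence $(T',\chi')$ is a lenient tree decomposition of $G$ of width at most $2$, and \autoref{@interrelated} yields $\sbn(G)\le 2$.

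The argument hinges on the two facts that \autoref{@menschengeist} was designed to provide, and I expect them to be the only real obstacle: that each $\chi(t)$ splits into at most two size-$\le 2$ bags realising all the adhesion sets at $t$ — this needs both the bound $|\chi(t)|\le 4$ and the clause that $\chi(t)$ is the union of ${\cal A}_t$ when the adhesion variety is two — and that condition \ref{@unpretentious} survives the gluing, i.e.\ that the per-gadget pieces of a vertex's trace fit together connectedly along the shared adhesion-set bags. The remaining verifications — conditions \ref{@predominantly} and \ref{@calamitously}, the width bound, and the closing appeal to \autoref{@interrelated} — are routine.
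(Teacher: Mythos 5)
Your proof is correct and takes essentially the same approach as the paper's: both extract a width-$2$ lenient tree decomposition from the triconnected decomposition supplied by Lemma~\ref{@menschengeist}, using the adhesion sets (together with the leftover ``petal'' vertices of each torso) as the new bags, and both appeal to the equivalence of Theorem~\ref{@interrelated} to conclude $G\in{\cal G}_2$. The paper builds the new tree directly (one node per distinct adhesion set, plus one node per leaf torso for its private vertices), whereas you present the same construction as replacing each torso by a two-bag gadget and gluing the gadgets along shared adhesion sets; the gluing works because, in a tree decomposition of adhesion two, the edges of $T$ carrying a fixed size-two adhesion set form a connected subtree, so all copies of a given adhesion set collapse to a single node, exactly as in the paper.
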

\begin{proof}
We prove that $G$ admits a lenient tree decomposition $(T, χ)$ of width $2.$ Then, by \autoref{@interrelated}, $G \in {\cal G}_{2}.$ Let $(T', χ')$ be a triconnected decomposition of $G,$ as in \autoref{@menschengeist}. We obtain $(T, χ)$ from $(T', χ')$ as follows. We define the set, ${\cal A} = \{ χ'(t) \cap χ'(t') \mid \{t, t' \} \in E(T')\},$ i.e. the set of different adhesion sets of $(T', χ').$ We define a node $t \in V(T)$ for each $A \in {\cal A}$ and we set $χ(t) = A.$ Two nodes $t, t' \in V(T)$ are adjacent if there exists a node $t'' \in V(T')$ such that, $χ(t)$ and  $χ(t'),$ are the adhesion sets of edges incident to $t''.$ Also for every leaf node $t \in V(T')$ of $T',$ we put all vertices of $χ'(t),$ which are not contained in some adhesion set, in a new node in $T$ which is adjacent to the unique node of $T,$ corresponding to the adhesion set of the edge incident to $t$ in $T'.$ Since every node of $(T', χ')$ has adhesion variety at most two, clearly $T$ is a tree. Since the adhesion sets of edges incident to a node $t$ in $T'$ with adhesion variety two, cover every vertex of $χ'(t),$ a vertex is either in a leaf or in an adhesion set in $T',$ thus $(T, χ)$ satisfies conditions \ref{@predominantly} and \ref{@calamitously}. Now, consider a vertex $x \in V(G).$ Since in $T',$ ${\sf Trace}_{(T', χ')}(x)$ is connected, the nodes corresponding to the different adhesion sets that contain $x$ will also be connected in $(T, χ),$ thus condition \ref{@unpretentious} is also satisfied. Since $(T', χ')$ has adhesion two, the width of $(T, χ)$ is two.
\end{proof}

\begin{figure}[t]
\begin{center}
\graphicspath{{./Figures/}}
\scalebox{1}{\includegraphics{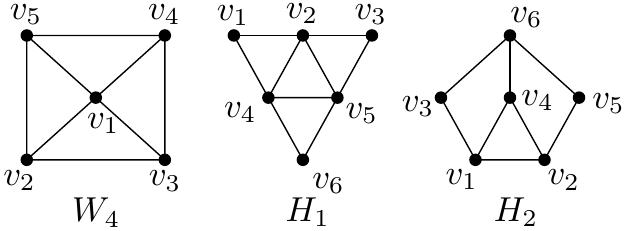}}
\end{center}
\caption{The graphs in ${\cal Z}.$}
\label{fig_obs}
\end{figure}

\begin{lemma}\label{@disjointedly} Let $G$ be a graph that has no strict bramble of order greater than $2.$ Then $G$ excludes as a minor the graphs in ${\cal Z}.$
\end{lemma}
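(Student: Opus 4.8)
The natural strategy is to exhibit, for each graph $Z\in{\cal Z}=\{W_4,H_1,H_2\}$, a strict bramble of order at least $3$, and then transfer this down along the minor relation. Since the class ${\cal G}_2$ of graphs of strict bramble number at most $2$ is closed under taking minors (this is the specialisation of \autoref{@intersubjective} / \autoref{@interrelated} to $k=2$: if $H\le_{\sf m}G$ and $G$ has a lenient tree decomposition of width $\le 2$, then so does $H$, hence $\sbn(H)\le 2$), it suffices to check that each of $W_4,H_1,H_2$ itself has a strict bramble of order $3$. Indeed, if $G$ had one of them as a minor while $\sbn(G)\le 2$, then that minor would also lie in ${\cal G}_2$, contradicting $\sbn(Z)\ge 3$. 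So the whole content of the lemma is three small finite computations.

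First I would handle $W_4$, the wheel on $5$ vertices (a $4$-cycle $v_1v_2v_3v_4$ plus a hub $h$ adjacent to all of them). The plan is to take the strict bramble consisting of the four ``spoke pairs'' together with the hub, or more robustly the collection of all four sets $\{h,v_i\}$ together with the set $\{v_1,v_2,v_3,v_4\}$ (the rim) — each is connected, any two intersect (two spoke-pairs share $h$; a spoke-pair and the rim share $v_i$), and one checks that no two vertices cover all five sets: a cover must hit the rim and each spoke-pair, and since the spoke-pairs only pairwise-overlap in $h$, omitting $h$ forces all four of $v_1,\dots,v_4$, while including $h$ still leaves the rim needing a $v_i$, so a cover needs $\ge 3$ vertices. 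Hence $\sbn(W_4)\ge 3$. For $H_1$ and $H_2$ I would read off their structure from \autoref{fig_obs} and, using the same recipe, identify three ``branches'' meeting pairwise but with no common $2$-element transversal; alternatively, and more cheaply, I would observe that it suffices to find in $H_1$ and in $H_2$ a $W_4$-minor, or failing that directly exhibit the bramble. (If $H_1,H_2$ do contain $W_4$ as a minor the lemma for them is immediate from the $W_4$ case; if not, a direct bramble of three pairwise-touching connected sets with pairwise-disjoint ``private'' parts of size forcing a $3$-cover does the job.)

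The main obstacle is simply that $H_1$ and $H_2$ are given only pictorially, so the argument must name their vertices and edges explicitly and then verify the bramble axioms and the order lower bound by hand; there is no conceptual difficulty, only the bookkeeping of a finite case check. A clean way to organise it uniformly is: for each $Z\in{\cal Z}$ pick three connected vertex sets $B_1,B_2,B_3\subseteq V(Z)$ that pairwise intersect and such that the three ``excess'' sets $B_i\setminus(B_j\cup B_\ell)$ are nonempty and pairwise disjoint — then any cover must contain a vertex of each excess set, forcing order $\ge 3$ — and note all of this is preserved when we only need it for $Z$ itself, not for $G$, thanks to minor-closedness of ${\cal G}_2$. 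I would close by remarking that, combined with \autoref{@uncomprehended} and \autoref{@redistribute}, this lemma yields $\obs({\cal G}_2)={\cal Z}$.
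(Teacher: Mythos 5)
Your high-level strategy is exactly the paper's: exhibit a strict bramble of order $\geq 3$ in each $Z\in{\cal Z}$ and then invoke minor-closedness of ${\cal G}_2$ (via \autoref{@intersubjective}/\autoref{@interrelated}). However, the one concrete bramble you actually write down is wrong, so the proof does not go through as stated.

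For $W_4$ you take the five sets $\{h,v_1\},\{h,v_2\},\{h,v_3\},\{h,v_4\},\{v_1,v_2,v_3,v_4\}$ and assert that a cover needs at least three vertices. That is false: $\{h,v_1\}$ already covers all five sets ($h$ hits every spoke pair, and $v_1$ hits the rim), so this strict bramble has order $2$, not $3$. Your sentence ``including $h$ still leaves the rim needing a $v_i$, so a cover needs $\geq 3$ vertices'' is a miscount — $h$ together with one $v_i$ is a $2$-element cover. The paper instead uses the strict bramble consisting of \emph{all} ten $3$-element subsets of $V(W_4)$ (each induces a connected subgraph in $W_4$, and any two triples among five vertices intersect), and then any $2$-element set misses the triple carried by the three remaining vertices, forcing order $3$.

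The treatment of $H_1,H_2$ also has a gap you should not lean on. You suggest, ``more cheaply,'' finding a $W_4$-minor inside $H_1$ or $H_2$. This cannot work: since the section ultimately proves $\obs({\cal G}_2)={\cal Z}$, the three graphs in ${\cal Z}$ are pairwise minor-incomparable, so $H_1$ and $H_2$ do \emph{not} contain $W_4$ as a minor (indeed you later note $\tw(H_1)=2<\tw(W_4)$). So the fallback is a dead end, and you must produce explicit strict brambles of order $3$ for $H_1$ and $H_2$ — which the paper does, listing for each a family of seven connected, pairwise-intersecting triples with no $2$-element cover. The uniform ``three pairwise-intersecting sets with disjoint private parts'' recipe you sketch at the end is fine in spirit, but it is not carried out, and your only worked example is incorrect.
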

\begin{proof}
We provide for each of the graphs in ${\cal Z},$ a strict bramble of order three. For $W_{4}$ we have the following strict bramble,
\begin{align*}
&\{\{v_1, v_2, v_3 \}, \{v_1, v_3, v_4 \}, \{v_1, v_4, v_5 \}, \{v_1, v_2, v_5 \}, \{v_2, v_3, v_5 \}, \{v_2, v_3, v_4 \}, \{v_3, v_4, v_5 \},\\
&\{v_2, v_4, v_5 \}, \{v_1, v_2, v_4 \}, \{v_1, v_3, v_5 \}\}
\end{align*}
for $H_{1}$ we have the following strict bramble,
\begin{align*}
&\{\{v_1, v_2, v_3 \}, \{v_3, v_5, v_6 \}, \{v_1, v_4, v_6 \}, \{v_2, v_4, v_5 \}, \{v_2, v_3, v_4 \}, \{v_1, v_2, v_5 \}, \{v_1, v_4, v_5 \}\}
\end{align*}
and for $H_{2}$ we have the following strict bramble.
\begin{align*}
&\{\{v_1, v_2, v_3 \}, \{v_1, v_2, v_5 \}, \{v_1, v_3, v_4 \}, \{v_2, v_4, v_5 \}, \{v_1, v_4, v_6 \}, \{v_2, v_4, v_6 \}, \{v_3, v_5, v_6 \}\}
\end{align*}

By the equivalency of \autoref{@interrelated}, it holds that $\sbn$ is minor-closed and thus $G$ excludes as a minor the graphs in ${\cal Z}.$
\end{proof}

We now present the main theorem of this section.

\begin{theorem} The obstruction set of the class ${\cal G}_{2},$ consists of the graphs in ${\cal Z}.$
\end{theorem}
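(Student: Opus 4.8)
The plan is to show that $\obs(\mathcal{G}_2) = \mathcal{Z}$ by a standard two-sided argument establishing that $\mathcal{G}_2 = \exc(\mathcal{Z})$, after which minor-minimality of the three graphs in $\mathcal{Z}$ must be checked separately. First I would prove the inclusion $\mathcal{G}_2 \subseteq \exc(\mathcal{Z})$: this is exactly \autoref{@disjointedly}, since if $G$ had one of the graphs in $\mathcal{Z}$ as a minor, then by minor-closedness of $\sbn$ (which follows from \autoref{@interrelated}) we would get $\sbn(G) \geq 3$, contradicting $G \in \mathcal{G}_2$. Conversely, to prove $\exc(\mathcal{Z}) \subseteq \mathcal{G}_2$, I would take any $G$ with $\sbn(G) > 2$, equivalently $G \notin \mathcal{G}_2$, and show it contains some member of $\mathcal{Z}$ as a minor. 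By \autoref{@redistribute}, every graph in $\obs(\mathcal{G}_2)$ is $2$-connected, so it suffices to prove the contrapositive for $2$-connected graphs; and for $2$-connected $G$, \autoref{@uncomprehended} gives exactly this: a $2$-connected graph excluding all of $\mathcal{Z}$ as minors lies in $\mathcal{G}_2$.

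The remaining work is to combine these ingredients cleanly and to verify minor-minimality. For the main equivalence: suppose $G \notin \mathcal{G}_2$. Take a minor-minimal such $G$, i.e. $G \in \obs(\mathcal{G}_2)$. By \autoref{@redistribute}, $G$ is $2$-connected. If $G$ excluded all graphs of $\mathcal{Z}$ as minors, then \autoref{@uncomprehended} would give $G \in \mathcal{G}_2$, a contradiction; hence $G$ contains some $Z \in \mathcal{Z}$ as a minor. By \autoref{@disjointedly} (or directly via the explicit strict brambles exhibited there together with minor-closedness), $Z \notin \mathcal{G}_2$; but then minor-minimality of $G$ forces $G = Z$. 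Therefore $\obs(\mathcal{G}_2) \subseteq \mathcal{Z}$. For the reverse inclusion, I must check that each of $W_4, H_1, H_2$ is minor-minimal with respect to not belonging to $\mathcal{G}_2$: each has $\sbn > 2$ by the explicit order-three strict brambles in \autoref{@disjointedly}, so it remains to confirm that every proper minor of each of these three graphs has $\sbn \leq 2$. Since deleting a vertex, deleting an edge, or contracting an edge from a small fixed graph yields a graph on at most six vertices, it suffices to exhibit — for each of the proper minors that matter (it is enough to consider single-edge deletions and single-edge contractions, since any proper minor is dominated by one of these) — a lenient tree decomposition of width $2$, or equivalently to appeal to \autoref{@uncomprehended} after checking that none of $W_4, H_1, H_2$ is itself a proper minor of another member of $\mathcal{Z}$ (so that, e.g., $W_4 \not\leq_{\sf m} H_1$). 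This last point guarantees that deleting/contracting inside one member of $\mathcal{Z}$ never reproduces another member, so the resulting proper minor excludes all of $\mathcal{Z}$ and \autoref{@uncomprehended} applies once we also observe that $2$-connectivity is not needed in the direction used — more precisely, a proper minor of a $2$-connected graph need not be $2$-connected, but by \autoref{@redistribute}-style reasoning (a graph is in $\mathcal{G}_2$ iff each of its blocks is, which is the argument inside the proof of \autoref{@redistribute}), it is enough that every $2$-connected minor of it excludes $\mathcal{Z}$, which holds.

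The main obstacle I expect is the minor-minimality verification: one must be careful that a proper minor of one obstruction is never equal to (nor has as a minor) another obstruction, and that every proper minor genuinely drops into $\mathcal{G}_2$. Concretely, I would argue: for each $Z \in \mathcal{Z}$ and each edge $e$ of $Z$, both $Z - e$ and $Z / e$ are $2$-connected-block-wise free of all members of $\mathcal{Z}$ (checked by inspection, using that $W_4, H_1, H_2$ are pairwise incomparable in the minor order and each is minor-minimal among $3$-connected-or-relevant small graphs with $\sbn > 2$), hence by \autoref{@uncomprehended} applied blockwise, $Z - e, Z/e \in \mathcal{G}_2$; vertex deletions are handled likewise or subsumed since a vertex deletion followed by nothing is still a proper minor covered by an edge operation when the vertex has degree $\geq 1$. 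Combining with the equivalence $\mathcal{G}_2 = \exc(\mathcal{Z})$ already obtained, we conclude $\mathcal{Z} \subseteq \obs(\mathcal{G}_2)$, and together with the earlier inclusion this yields $\obs(\mathcal{G}_2) = \mathcal{Z}$, completing the proof.
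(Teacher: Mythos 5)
Your proposal is correct and follows essentially the same route as the paper: $\mathcal{Z}\subseteq\obs(\mathcal{G}_2)$ via \autoref{@disjointedly} plus a minor-minimality check, and $\obs(\mathcal{G}_2)\subseteq\mathcal{Z}$ by combining \autoref{@redistribute} and \autoref{@uncomprehended} applied to a hypothetical obstruction outside $\mathcal{Z}$. The only stylistic difference is that you reduce the minor-minimality verification to pairwise minor-incomparability of $W_4,H_1,H_2$ using the already-established equivalence $\mathcal{G}_2=\exc(\mathcal{Z})$, whereas the paper simply observes directly that every proper minor of a graph in $\mathcal{Z}$ has strict bramble number at most two; your reduction is slightly tidier but relies on the same facts.
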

\begin{proof} Observe that any proper minor of the graphs in ${\cal Z}$ has strict bramble number at most two. Combined with \autoref{@disjointedly}, this implies that ${\cal Z} \subseteq {\sf Obs}({\cal G}_{2}).$ Now assume that there exists an obstruction $Z \in {\sf Obs}({\cal G}_{2}) \setminus {\cal Z}.$ As $Z\in  {\sf Obs}({\cal G}_{2}),$  $Z$ excludes all the graphs in ${\cal Z}$ as a minor. Additionally, by \autoref{@redistribute}, $Z$ is $2$-connected. Then, by \autoref{@uncomprehended}, $Z \in {\cal G}_{2},$ which contradicts the choice of $Z.$ Thus ${\sf Obs}({\cal G}_{2}) = {\cal Z}.$
\end{proof}

Notice that ${\sf Obs}({\cal G}_{2}) = {\cal Z},$ indicates that the class ${\cal G}_{2}$ is ``orthogonal'' to the class, say ${\cal T}_2,$  of the graphs of treewidth at most two, where $\obs({\cal T}_{2})=\{K_{4}\}.$ Indeed $\sbn(K_{4})=2$ and  $\tw(K_{4})=3,$ while $\sbn(H_{1})=3$ and  $\tw(H_{1})=2.$

\section{\NP-completeness}
\label{@unflattering}

In this section, we prove that given a graph $G$ and an integer $k \in {\Bbb N},$ deciding whether $\sbn(G) \leq k$ or, equivalently, $\ltp(G) \leq k$ is \NP-complete. Membership in {\sf NP} follows from the definition of $\sbn$
via lenient tree decompositions, due to the min-max equivalence of~\autoref{@interrelated}. For the {\sf NP}-hardness, we reduce the problem of deciding whether $\tw(G)\leq k$ (that is {\sf NP}-complete~\cite{ArnborgCP87comp}) to our problem. We first prove the following Lemma.

\begin{lemma}\label{@comparability} Let $H$ be a graph obtained from a graph $G$ after replacing every edge of $G$ with $2k-1$ paths of length two. If $H$ admits a lenient tree decomposition $(T, χ)$ of width at most $k$ then, for every pair of vertices $x, y \in V(G)$ that are adjacent in $G,$ there is a bag of $(T, χ)$ that contains them both.
\end{lemma}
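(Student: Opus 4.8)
\medskip

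The plan is to argue by contradiction: suppose $x,y$ are adjacent in $G$ but no bag of $(T,\chi)$ contains $\{x,y\}$. The $2k-1$ subdivision paths between $x$ and $y$ give $2k-1$ internally disjoint $x\text{-}y$ paths of length two in $H$, with distinct midpoints $z_1,\dots,z_{2k-1}$. First I would invoke the lenient tree decomposition structure to locate the nodes whose bags contain $x$ and those whose bags contain $y$: the traces ${\sf Trace}(x)$ and ${\sf Trace}(y)$ are each connected in $T$ by~\ref{@unpretentious}, and since no bag contains both, these two subtrees are disjoint. I would then consider whether they can be "close" — if some $t\in{\sf Trace}(x)$ and $t'\in{\sf Trace}(y)$ are adjacent in $T$, then every midpoint $z_i$ must, by~\ref{@calamitously} applied to the two edges $\{x,z_i\}$ and $\{z_i,y\}$, be placed in bags close to both $t$ and $t'$; tracking this carefully should show all $z_i$ are forced into $\chi(t)\cup\chi(t')$ together with $x,y$, but that is a set of size at most $2k$ containing $2k-1$ private midpoints plus $x$ and $y$, i.e.\ at least $2k+1$ vertices — contradiction. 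If instead ${\sf Trace}(x)$ and ${\sf Trace}(y)$ are at distance at least two in $T$, pick an internal node $t''$ on the path between them; \autoref{@metaphysicians} (or \autoref{@deliberately} applied to each connected midpoint-pair) tells us $\chi(t'')$ is an $(x,y)$-separator in $H$, hence it must hit every one of the $2k-1$ internally disjoint $x\text{-}y$ paths, forcing $|\chi(t'')|\ge 2k-1 > k$, again a contradiction.

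\medskip

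More precisely, the cleanest route is probably to define, for each midpoint $z_i$, the set $U_i=\bigcup_{u\in\{x,z_i,y\}}{\sf Trace}(u)$; by~\ref{@calamitously} and~\ref{@unpretentious} (exactly as in the proof of \autoref{@metaphysicians}) each $U_i$ is connected in $T$ and therefore contains the entire $T$-path $P$ joining ${\sf Trace}(x)$ to ${\sf Trace}(y)$. So for every internal node $t$ of $P$ and every $i$, the bag $\chi(t)$ must contain one of $x,z_i,y$. If $P$ has an internal node $t$, then since $t\notin{\sf Trace}(x)$ or $t\notin{\sf Trace}(y)$ for at least one endpoint side — and in fact, by choosing $t$ with $x,y\notin\chi(t)$, which is possible because ${\sf Trace}(x),{\sf Trace}(y)$ are disjoint and closed intervals of $P$ — we get $z_i\in\chi(t)$ for all $i\in[2k-1]$, so $|\chi(t)|\ge 2k-1>k$, contradiction. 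If $P$ has no internal node, then ${\sf Trace}(x)$ and ${\sf Trace}(y)$ share an edge $\{t_x,t_y\}$ with $x\in\chi(t_x)$, $y\in\chi(t_y)$; then connectivity of $U_i$ forces $\{t_x,t_y\}\subseteq U_i$, so each $z_i\in\chi(t_x)\cup\chi(t_y)$, and since also $x\in\chi(t_x)\setminus\chi(t_y)$ is possible we get that $\chi(t_x)\cup\chi(t_y)$ contains $x$, $y$ and all $2k-1$ midpoints, at least $2k+1$ vertices in a union of two size-$\le k$ bags — contradiction.

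\medskip

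The main obstacle I anticipate is the bookkeeping in the "close traces" case: one has to be careful that the midpoints $z_i$ are genuinely forced into a bounded union of bags rather than being spread out, and in particular that $x$ (or $y$) is not itself one of the midpoints being double-counted — here the fact that the $z_i$ are brand-new subdivision vertices, all distinct from each other and from $V(G)$, is what makes the counting work. A secondary subtlety is the edge case $|V(T)|\le 2$, which should be handled directly (a single bag, or two adjacent bags, cannot accommodate the $2k+1$ relevant vertices). Everything else is a routine application of the already-established separator lemmas \autoref{@metaphysicians} and \autoref{@deliberately} together with the definitional conditions \ref{@calamitously} and \ref{@unpretentious}.
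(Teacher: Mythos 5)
Your proof is correct and follows essentially the same route as the paper: locate the closest nodes $t_x,t_y$ whose bags contain $x$ and $y$, use \autoref{@metaphysicians} (equivalently, Menger plus the $2k-1$ subdivision paths) to rule out distance $\geq 2$, and use a counting argument on $\chi(t_x)\cup\chi(t_y)$ against the $2k-1$ midpoints to rule out distance $1$. One small remark: in the adjacent-traces case, the inference ``connectivity of $U_i$ forces $\{t_x,t_y\}\subseteq U_i$, so $z_i\in\chi(t_x)\cup\chi(t_y)$'' is slightly too quick as written (the fact $\{t_x,t_y\}\subseteq U_i$ is trivially true and does not by itself place $z_i$ in those bags; one has to argue that ${\sf Trace}(z_i)$ must meet $\{t_x,t_y\}$ or straddle the cut edge), but the paper's own treatment of that subcase is equally terse, so this is not a substantive deviation.
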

\begin{proof}
Let $x, y \in V(G)$ be an adjacent pair of vertices in $G.$ We can easily observe that, by the definition of $H,$ there are $2k - 1$ disjoint $x\text{-}y$ paths in $H.$ Then, by Menger's Theorem, any $(x,y)$-separator in $H$ has size at least $2k - 1.$ Let $t_{x}, t_{y} \in V(T)$ be the closest nodes in $(T, χ)$ whose bags contain $x, y$ respectively. We want to show that the distance between them is zero. Assume that it is at least two. Then there exists a node of $T,$ in between $t_{x}$ and $t_{y}$ whose bag, by \autoref{@metaphysicians}, is a $(x,y)$-separator of $H$ of size at most $k$ which is a contradiction. Now assume that the distance is one. Then since $|χ(t_{x}) \cup χ(t_{y})| \leq 2k,$ there is a neighbour of $x, y$ in $H$ which is not contained in either $χ(t_{x})$ or $χ(t_{y}),$ which violates condition \ref{@calamitously}.
\end{proof}

We continue with the proof of the reduction.

\begin{theorem} There exists a polynomially computable function that, given a graph $G$ and an integer $k \in {\Bbb N},$ outputs a graph $H$ such that, $\tw(G) \leq k - 1$ if and only if, $\ltp(H) \leq k.$
\end{theorem}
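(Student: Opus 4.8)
The plan is to make precise the reduction already prepared by \autoref{@comparability}. Given an instance $(G,k)$, let $H$ be the graph obtained from $G$ by replacing every edge by $2k-1$ internally disjoint paths of length two, so that $V(H)$ consists of $V(G)$ together with one new vertex per path, and this new vertex is adjacent in $H$ precisely to the two endpoints of the corresponding edge of $G$. We may assume $1\le k\le |V(G)|$: if $k>|V(G)|$ then $\tw(G)\le |V(G)|-1\le k-1$ holds trivially and the function outputs a fixed graph with $\ltp\le k$, and the case $k=0$ is handled symmetrically. Under this assumption $H$ has $|V(G)|+(2k-1)|E(G)|$ vertices and is constructible in time polynomial in $|(G,k)|$. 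By \autoref{@interrelated}, $\ltp(H)\le k$ if and only if $H$ has a lenient tree decomposition of width at most $k$, so it suffices to prove the equivalence: $\tw(G)\le k-1$ iff $H$ has a lenient tree decomposition of width at most $k$.

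For the forward implication, take a tree decomposition $(T_0,\chi_0)$ of $G$ of width at most $k-1$, so every bag has size at most $k$. For each edge $\{x,y\}$ of $G$ fix a node $t$ of $T_0$ with $\{x,y\}\subseteq\chi_0(t)$, and to $t$ attach, for each of the $2k-1$ subdivision vertices $w$ of that edge, a new pendant node $t_w$ whose bag is the singleton $\{w\}$, keeping all original bags unchanged. I would then check that the resulting pair $(T,\chi)$ is a lenient tree decomposition of $H$: \ref{@predominantly} is immediate; for \ref{@calamitously}, every edge $\{w,x\}$ of $H$ is contained in $\chi(t)\cup\chi(t_w)$ for the close pair formed by $t$ (which contains $x$) and the pendant node $t_w$ carrying $w$, and likewise for $\{w,y\}$; and \ref{@unpretentious} holds because the trace of every original vertex is unchanged, while each new vertex occupies a single bag. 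All bags have size at most $\max(k,1)=k$, so the width is at most $k$.

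For the converse, let $(T,\chi)$ be a lenient tree decomposition of $H$ of width at most $k$. By \autoref{@comparability}, any two vertices adjacent in $G$ lie together in some bag of $(T,\chi)$. Set $\chi'(t)=\chi(t)\cap V(G)$ for each node $t$ of $T$; I claim $(T,\chi')$ is a (standard) tree decomposition of $G$. Condition (C1) holds since $V(G)\subseteq V(H)$ is covered by $(T,\chi)$; (C2) is exactly the conclusion of \autoref{@comparability}; and (C3) holds because, for $x\in V(G)$, the set $\{t\mid x\in\chi'(t)\}$ equals $\{t\mid x\in\chi(t)\}$, which is connected in $T$ by \ref{@unpretentious}. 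Since every bag of $(T,\chi')$ has size at most $k$, the width of $(T,\chi')$ is at most $k-1$, hence $\tw(G)\le k-1$.

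The only genuinely non-trivial ingredient is the fact that a lenient tree decomposition of $H$ of width at most $k$ is forced to co-locate adjacent vertices of $G$, and this is precisely what \autoref{@comparability} (through Menger's theorem and \autoref{@metaphysicians}) supplies; everything else is the routine verification of the decomposition axioms in the two directions above, so I do not expect a serious obstacle. Combined with the \NP-completeness of treewidth~\cite{ArnborgCP87comp} and the membership of $\sbn$ in \NP obtained from \autoref{@interrelated} (lenient tree decompositions serve as polynomial-size certificates), this yields that deciding $\sbn(G)\le k$ is \NP-complete.
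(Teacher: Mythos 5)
Your proposal is correct and uses the same reduction as the paper: the gadget replaces each edge of $G$ by $2k-1$ internally disjoint length-two paths, and the backward direction hinges on \autoref{@comparability} exactly as in the paper, with bags intersected with $V(G)$ to yield a tree decomposition of $G$ of width at most $k-1$. The only (cosmetic) difference is in the forward direction, where you construct an explicit lenient tree decomposition by attaching singleton pendant bags to a width-$(k-1)$ tree decomposition of $G$, whereas the paper simply notes that adding parallel edges and subdividing cannot increase treewidth and then invokes $\ltp(H)\leq\tw(H)+1$; both are equally short.
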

\begin{proof} We obtain $H$ by replacing every edge of $G$ with $2k - 1$ paths of length two. Observe that the forward direction easily holds. Indeed, having parallel edges and subdividing them does not increase the tree width of a graph. Then the claim follows since $\tw(G) \leq \ltp(G).$ For the backwards direction, by \autoref{@interrelated}, it is equivalent to argue in terms of lenient tree decompositions. Let $(T, χ)$ be a lenient tree decomposition of $H$ of width at most $k-1.$ Then, by \autoref{@comparability}, for every pair of vertices $x, y \in V(H)$ adjacent in $G,$ there is a bag of $(T, χ)$ that contains them both. Now simply observe that $(T, χ)$ is a tree decomposition of $G,$ of width at most $k-1.$ Thus $\tw(G) \leq k-1.$
\end{proof}

\section{Open problems}
\label{@mechanisches}

In this paper we initiated the study of the strict bramble number. As computing this parameter is an {\sf NP}-complete problem, it is meaningful to design a parameterized algorithm, i.e. one that can answer whether $\sbn(G) \leq k$ in $f(k) \cdot |V(G)|^{O(1)}$ steps, for some computable function $f.$ Such an algorithm actually exists because this question is equivalent to checking whether none of the graphs in $\obs(G_{k})$ is contained in $G$ as a minor, $\obs(G_{k})$ is finite for every $k$ because of the Robertson and Seymour theorem \cite{RobertsonS04GMXX}, and minor checking for graphs of bounded treewidth can be done in linear (on the size of the host graph) time. However, this argument is not constructive as we do not know $\obs(G_{k}).$ Therefore it is an open problem to actually design such an algorithm. A possible step for this would to identify $\obs(G_{k})$ for higher values of $k.$ However, this seems to be a hard problem, even for $k=3.$ 
Combining computer search with some graph-theoretic observations, we were able to identify\footnote{See 
\href{https://www.cs.upc.edu/~sedthilk/twointer/Obstruction_checker.py}{https://www.cs.upc.edu/\~{}sedthilk/twointer/Obstruction\_checker.py}
for the verification code.} at least 194 members of $\obs({\cal G}_{3})$: %
\href{https://www.cs.upc.edu/~sedthilk/twointer/7%20vertices.zip}{\greenfbox{5 with 7 vertices}}, 
\href{https://www.cs.upc.edu/~sedthilk/twointer/8%20vertices.zip}{\greenfbox{19 with 8 vertices}}, 
\href{https://www.cs.upc.edu/~sedthilk/twointer/9%20vertices.zip}{\greenfbox{75 with 9 vertices}}, 
\href{https://www.cs.upc.edu/~sedthilk/twointer/10%20vertices.zip}{\greenfbox{86 with 10 vertices}}, and 
\href{https://www.cs.upc.edu/~sedthilk/twointer/11%20vertices.zip}{\greenfbox{at least 9 with at least $11$ vertices}}.
%


\end{document}